\definecolor{mgre}{cmyk}{0.92,0.00,0.59,0.25}
\def\cD{{\mathcal{D}}}
\def\cE{{\mathcal{E}}}
\def\cF{{\mathscr{F}}}
\def\cH{{\mathcal{H}}}
\def\cK{{\mathcal{K}}}
\def\cL{{\mathcal{L}}}
\def\sB{{\mathscr{B}}}
\def\sF{{\mathscr{F}}}
\def\1{{\mathbf{1}}}
\newcommand{\Sch}{Schr\"odinger}
\newcommand{\capa}{\mathrm{Cap}}
\newcommand{\loc}{\textrm{loc}}
\def\<{\langle}
\def\>{\rangle}
\newtheorem{thm}{Theorem}[section]
\newtheorem{prop}[thm]{Proposition}
\newtheorem{lem}[thm]{Lemma}
\newtheorem{cor}[thm]{Corollary}
\theoremstyle{definition}
\newtheorem{defi}[thm]{Definition}
\newtheorem{prob}{}[section]
\newtheorem{rem}[thm]{Remark}
\newtheorem{ex}[thm]{Example}
\newtheorem{pro}{Problem}[section]
\newtheorem{ass}[thm]{Assumption}
\newcommand{\bd}{\begin{defi}}
\newcommand{\ed}{\end{defi}}
\newcommand{\bpro}{\begin{pro}}
\newcommand{\epro}{\end{pro}}
\newcommand{\bec}{\begin{cases}}
\newcommand{\eec}{\end{cases}}
\newcommand{\bpr}{\begin{prob}}
\newcommand{\epr}{\end{prob}}
\newcommand{\bt}{\begin{thm}}
\newcommand{\et}{\end{thm}}
\newcommand{\ba}{\begin{ass}}
\newcommand{\ea}{\end{ass}}
\newcommand{\br}{\begin{rem}}
\newcommand{\er}{\end{rem}}
\newcommand{\bpm}{\begin{pmatrix}}
\newcommand{\epm}{\end{pmatrix}}
\newcommand{\be}{\begin{ex}}
\newcommand{\ee}{\end{ex}}
\newcommand{\bp}{\begin{prop}}
\newcommand{\ep}{\end{prop}}
\newcommand{\bl}{\begin{lem}}
\newcommand{\el}{\end{lem}}
\newcommand{\bc}{\begin{cor}}
\newcommand{\ec}{\end{cor}}
\newcommand{\bq}{\begin{que}}
\newcommand{\eq}{\end{que}}
\newcommand{\beqn}{\begin{eqnarray*}}
\newcommand{\eeqn}{\end{eqnarray*}}
\newcommand{\beqnn}{\begin{eqnarray}}
\newcommand{\eeqnn}{\end{eqnarray}}
\newcommand{\bequ}{\begin{equation}}
\newcommand{\eequ}{\end{equation}}
\newcommand{\benu}{\begin{enumerate}}
\newcommand{\eenu}{\end{enumerate}}
\newcommand{\barr}{\begin{array}{rcl}}
\newcommand{\ear}{\end{array}}
\newcommand{\la}{\label}
\newcommand{\ds}{\displaystyle}
\newcommand{\eps}{\epsilon}
\newcommand{\Om}{\Omega}
\begin{document}

\title[Criticality and Recurrence]{Criticality of Schr\"{o}dinger Forms 
and Recurrence of Dirichlet Forms
}

\author{Masayoshi Takeda, Toshihiro Uemura} 

\address{Department of Mathematics,
		Kansai University, Yamatecho, Suita, 564-8680, Japan}

\email{mtakeda@kansai-u.ac.jp} 

\address{Department of Mathematics,
		Kansai University, Yamatecho, Suita, 564-8680, Japan}

\email{t-uemura@kansai-u.ac.jp} 

\thanks{The first author was supported in part by Grant-in-Aid for Scientific
	Research (No.18H01121(B)), Japan Society for the Promotion of Science. 
	The second author was supported in part by Grant-in-Aid for Scientific 
		Research (No.19K03552(C)), Japan Society for the Promotion of Science.}

\subjclass[2010]{60J46, 31C25, 31C05, 60J25}

\begin{abstract}
Introducing the notion of extended Schr\"odinger spaces, we define the criticality and subcriticality 
of Schr\"odinger forms 
  in the same manner as the recurrence and transience of Dirichlet forms, and give a sufficient condition for the 
subcriticality of Schr\"odinger forms in terms the bottom of spectrum. 
We define a subclass of Hardy potentials and prove that Schr\"odinger forms with potentials in this subclass
 are always critical, which leads us to optimal Hardy type inequality.  
We show that this definition of criticality and subcriticality 
 is equivalent to that there exists an excessive 
function with respect to Schr\"odinger semigroup and its generating Dirichlet form through $h$-transform 
 is recurrent and transient respectively.
As an application, we can show the recurrence and transience of a family of Dirichlet forms by showing 
the criticality and subcriticaly of Schr\"odinger forms and show the other way around through 
$h$-transform, We give a such example with fractional Schrödinger operators with Hardy potential.
\end{abstract}

\maketitle

\newcommand\sfrac[2]{{#1/#2}}

\newcommand\cont{\operatorname{cont}}
\newcommand\diff{\operatorname{diff}}

\section{Introduction}
Let $E$ be a locally compact separable metric space and $m$ a positive
Radon measure on $E$ with full topological support. 
Let $X= (P_x, X_t, \zeta)$ be an $m$-symmetric Hunt process. We assume that 
$X$ is irreducible, strong Feller and transient, in addition, that 
 the Dirichlet form $(\cE, \cD(\cE))$ generated by $X$ is {regular} 
on $L^2(E;m)$.  
For a Green-tight Kato measure $\mu$ ($\mu\in\cK_\infty$ in notation), 
define a Schr\"{o}dinger form $(\cE^\mu , \cD(\cE^\mu ))$ by  
\begin{equation}\la{s-1}
\cE^\mu(u,u)=\cE(u,u)-\int_Eu^2d\mu,\ \ u\in\cD(\cE^\mu)(=\cD(\cE)).
\end{equation}
One of authors define in \cite{T3} the criticality or subcriticality for  $\cE^\mu$ 
through $h$-transform; let $A^{\mu}_t$ be the positive continuous additive functional 
with Revuz measure $\mu$ and $\{p^\mu_t\}_{t\geq 0}$ the Feynman-Kac semigroup defined by
\bequ\la{f-k}
p^\mu_tf(x)=E_x\left(e^{A^\mu_t}f(X_t)\right).
\eequ
We introduce the space of $p^\mu_t$-excessive functions by
	\begin{equation}\la{mu-exc}
		\cH^+(\mu) = \{ h \,\mid\, h\ \text{is\ quasi\text{-}continuous},\ h>0
		\ {\rm q.e.},\   p_t^{\mu} h \le h\  {\rm q.e.}\}.
	\end{equation}
Here the term ``q.e.'' means ``except on a set of zero capacity''.
Suppose that $\cH^+(\mu)$ is not empty. Then for $h \in \cH^+(\mu)$ the symmetric form $(\cE^{\mu,h},
\cD(\cE^{\mu,h}))$ on $L^2(E;h^2m)$ is defined by $h$-transform of $(\cE^\mu, \cD(\cE^\mu))$:
\begin{equation}\la{h-form}
\left\{
\begin{split}
& \cE^{\mu,h} (u,u)  = \cE^{\mu} (hu, hu)  \\
& \cD(\cE^{\mu,h}) = \{ u \in L^2(E;h^2m)\mid hu\in \cD(\cE^\mu)\}.
\end{split} 
\right.
\end{equation}
Li  show in \cite{Li} that $(\cE^{\mu,h},\cD(\cE^{\mu,h}))$ turns out to be a {\it quasi-regular 
Dirichlet form} on $L^2(E;h^2m)$. 	
Consequently, if $\cH^+(\mu)$ is not empty, then $(\cE^\mu , \cD(\cE^\mu ))$ is positive semi-definite,
$\cE^\mu(u,u)\geq 0$ for all $u\in \cD(\cE^\mu )$.
The $L^2(E;h^2m)$-Markov semigroup $\{T_t^{\mu,h}\}_{t\geq 0}$ associated 
with $(\cE^{\mu,h},\cD(\cE^{\mu,h}))$ is expressed by 
$$
T_t^{\mu,h}f(x)=\frac{1}{h(x)}p_t^\mu(h f)(x)\  \ m\text{-a.e.}
$$
In \cite{Li}, \cite{T3}, we define the {subcriticality} (resp. {criticality}) for $(\cE^{\mu}, \cD(\cE^{\mu}))$ 
 as follows: $(\cE^{\mu}, \cD(\cE^{\mu}))$ is said to
	be {critical} (resp. {subcritical}) if  $\cH^+(\mu)$ is not empty and 
	$(\cE^{\mu,h}, \cD(\cE^{\mu,h}))$ is recurrent
(resp.  transient) for some $h \in \cH^+(\mu)$. Besides, $(\cE^{\mu}, \cD(\cE^{\mu}))$ is  
	said to be {supercritical} if $\cH^{+}(\mu)$ is empty. We show that these definitions are well-defined and obtain the following analytic criteria for the {subcriticality} and  {criticality} (\cite[Theorem 5.19]{T3}): Define $\lambda (\mu)$ by
\begin{equation}\la{j-ground1}
\lambda (\mu) := \inf \left\{ \cE(u)\, \Big|\, u \in
		       \cD(\cE)\cap C_0(E),\ \int_E u^2 d\mu = 1\right\}.
\end{equation}
(We simply write $a(u)$ for $a(u,u)$ for a symmetric form $a(u,v)$). Here, $\lambda(\mu)$ is regarded as 
the principal eigenvalue of the time changed process by $A^\mu_t$. Then
$(\cE^{\mu},\cD(\cE^{\mu}))$ is subcritical, critical and supercritical,
 if and only if $\lambda (\mu)>1$, $\lambda (\mu)=1$ and $\lambda (\mu)<1$ respectively. 
 For establishing this analytic criteria, the assumption $\mu\in\cK_\infty$ is crucial. 
 Indeed,  since for $\mu\in\cK_\infty$ the extended Dirichlet space 
 of $(\cE,\cD(\cE))$ is compactly embedded in $L^2(E;\mu)$ (\cite[Theorem 4.8]{T-C}) and thus  
 the minimizer of (\ref{j-ground1}) exists. By the argument similar to  one in the proof of \cite[Theorem 3.1]{T5}
  we see that the minimizer
 belongs to $\cH^+(\mu)$ if and only if $\lambda(\mu)\ge 1$, i.e.
\bequ\la{m-all}
\lambda(\mu)\ge 1\ \Longleftrightarrow\ \cH^+(\mu)\not=\emptyset.
\eequ
Let $\gamma(\mu)$ be the bottom of the Schr\"{o}dinger form   $(\cE^\mu , \cD(\cE^\mu ))$: 
\bequ\la{bot}
\gamma(\mu)=\inf\left\{\cE^\mu(u)\, \Big|\, u\in\cD(\cE)\cap C_0(E),\ \int_E u^2 dm = 1\right\}.
\eequ
Since $\lambda(\mu)\ge 1$ is equivalent to $\gamma(\mu)\ge 0$ (\cite[Lemma 2.2]{TT}),  the following Allegretto-Piepenbrink-type theorem is obtained: For $\mu\in\cK_{\infty}$
\bequ\la{a-p}
\gamma(\mu)\ge 0\ \Longleftrightarrow\ \cH^+(\mu)\not=\emptyset.
\eequ
This is a reason why for $\mu\in\cK_{\infty}$ the criticality or subcriticality of $(\cE^\mu , \cD(\cE^\mu ))$ are completely classified in terms of   
$\lambda(\mu)$. 

\smallskip
In this paper, we treat more general class of measures including Hardy class of potentials, 
and define  the {criticality}
 (resp. {subcriticality}) of associated Schr\"odinger forms  in another way similar to 
the recurrence (resp. transience) of Dirichlet forms, without using the $h$-transform. More precisely, 
Let $(\cE,\cD(\cE))$ be a regular Dirichlet form. We assume that it is irreducible and transient throughout this paper.
For a smooth Radon measure $\mu$, define the form $(\cE^\mu,\cD(\cE)\cap C_0(E))$ as in (\ref{s-1}). 
We assume that 
 $(\cE^\mu,\cD(\cE)\cap C_0(E))$ 
is positive semi-definite, i.e., $\gamma(\mu)$ in (\ref{bot}) is non-negative,  
and closable in $L^2(E;m)$. We denote its closure by 
$(\cE^\mu , \cD(\cE^\mu ))$ (For a sufficient condition for the closability, see Lemma \ref{e-1}, 
Theorem \ref{cl} below). 

The criticality and subcriticality of Schr\"odinger forms are extended notions of the recurrence and transience 
of Dirichlet forms. For characterizing the recurrence and transience 
of Dirichlet forms the extended Dirichlet spaces play a crucial role. 
In Schmuland \cite{Sch} and  \cite{Sch1}, he extends the notion of extended Dirichlet spaces 
 to positive semi-definite symmetric closed forms with {\it Fatou property} and 
 show that positivity preserving forms have
  the Fatou property. We denote by ${\cD_e}(\cE^{\mu})$
 the extended space of $(\cE^{\mu}, \cD(\cE^{\mu}))$ and call it {\it extended Schr\"odinger space}
  (The definition of extended Schr\"odinger spaces is given in Section 2). 
On account of these facts, we easily imagine that the notion of extended Schr\"odinger spaces is available
 for characterizing the criticality and subcriticality of Schr\"odinger forms.  Indeed, we define a 
 form $(\cE^{\mu}, \cD(\cE^{\mu}))$ is {\it subcritical} if there exists a bounded function $g\in L^1(E;m)$
  strictly positive $m$-a.e. such that 
\bequ\la{n-sub}
	\int_E|u| g dm\le \sqrt{\cE^\mu(u)},\ \ u\in\cD_e(\cE^{\mu})
\eequ
and is {\it critical} if there exists a function $\phi$ in $\cD_e(\cE^{\mu})$ strictly positive $m$-a.e. 
	 such that $\cE^\mu(\phi)=0$. In other words, the function $\phi$ is the ground state of the  Schr\"odinger 
	 operator $\cH^\mu$ associated with $(\cE^{\mu}, \cD(\cE^{\mu}))$, $(-\cH^\mu u,v)_m=\cE^\mu(u,v)$.
We would like to emphasize that the space ${\cD_e}(\cE^{\mu})$ contains the extended Dirichlet 
	 space ${\cD_e}(\cE)$ of the Dirichlet space $(\cE,\cD(\cE))$ and  $\phi$ does not belong to ${\cD_e}(\cE)$
	 generally. 

Let $\{T^\mu_t\}$	 be the $L^2(E;m)$-semigroup associated with $(\cE^{\mu}, \cD(\cE^{\mu}))$. 
For a general smooth Radon measure $\mu$, we do not know whether 
the semigroup $T^\mu_t$ is probabilistically expressed as 
\bequ\la{p-e}
T^\mu_tf(x)=p^\mu_tf(x)\ \ m\text{-a.e. },
\eequ
where the semigroup $p^\mu_t$ is defined in (\ref{f-k}), and thus define the space of $T^\mu_t$-excessive 
functions by
	\begin{equation}\la{mu-exc}
		\cH^+(\mu) = \{ h \,\mid\, 0<h<\infty\ m\text{-a.e.},\   T_t^{\mu} h \le h\ m\text{-a.e.}\}.
	\end{equation}
Here note that $T^\mu_t$ can be extended to an operator on the space of non-negative functions.

 For $h \in \cH^+(\mu)$ let $(\cE^{\mu,h},
\cD(\cE^{\mu,h}))$ be the Dirichlet form on $L^2(E;h^2m)$ defined by $h$-transform 
of $(\cE^\mu, \cD(\cE^\mu))$ as (\ref{h-form}).
We then show that if $(\cE^{\mu}, \cD(\cE^{\mu}))$ is subcritical, 
then the $0$-resolvent $G^\mu g(x)=\int_0^\infty T^\mu_tg(x)dt$ of 
the function $g$ in (\ref{n-sub}) belongs to 
$\cD_e(\cE^\mu)\cap \cH^+(\mu)$ and  
 $(\cE^{\mu,G^\mu g}, \cD(\cE^{\mu,G^\mu g}))$ has the killing
part, in particular, transient (Lemma \ref{77}, Lemma \ref{ma} below). 
Consequently, for any $h\in\cH^+(\mu)$ $(\cE^{\mu,h}, \cD(\cE^{\mu,h}))$ is 
transient (Lemma \ref{well1}). 

Suppose that $\mu$ satisfies $\lambda(\mu)>1$, where $\lambda(\mu)$ is the bottom of spectrum in 
(\ref{j-ground1}). \footnote{For a general smooth Radon measure $\mu$ 
we do not know whether $\lambda(\mu)$ is the principal eigenvalue or not. }
We then show that $G^\mu\varphi$ is in $\cD_e(\cE^\mu)\cap \cH^+(\mu)$ 
for any non-trivial, non-negative function $\varphi\in C_0(E)$, which 
 implies $(\cE^{\mu},\cD(\cE^{\mu}))$ is subcritical. Note that $\mu$ is not supposed 
to be Green-tight. Conversely, we can show if there exists a $h\in\cH^+(\mu)$ 
such that $(\cE^{\mu,h}, \cD(\cE^{\mu,h}))$ is 
transient, then $(\cE^\mu,\cD(\cE^\mu))$ is subcritical.

We show that if $(\cE^{\mu}, \cD(\cE^{\mu}))$ is critical, 
then the ground state $\phi$ in the definition of the criticality satisfies $T^\mu_t\phi=\phi$, in particular, 
$\phi\in\cH^+(\mu)$,  and   
$(\cE^{\mu,\phi},\cD(\cE^{\mu,\phi}))$ is recurrent. Moreover, every function $h\in\cH^+(\mu)$ can be written
as $h=c\phi,\ c>0$. Conversely, if there exists a $h\in\cH^+(\mu)$ such that $(\cE^{\mu,h}, \cD(\cE^{\mu,h}))$ is 
recurrent, then $(\cE^\mu,\cD(\cE^\mu))$ is critical. In Section 6, 
we treat a family of Dirichlet forms defined 
in (\ref{del}) $(0\le \delta\le d-\alpha)$ below and show that it is recurrent if and only if $\delta=(d-\alpha)/2$.

Here we would like to make a comment on problems caused by treating a general smooth Radon measure instead 
of a Green-tight Kato measure: When $\lambda(\mu)=1$, we do not know how to construct 
a function in $\cH^+(\mu)$ and whether a Schr\"odinger form is subcritical or critical. In particular, we 
do not know whether the Allegretto-Piepenbrink-type theorem (\ref{a-p}) holds or not
 (For recent results on Allegretto-Piepenbrink-type theorem, see \cite{JMV}, \cite{KP}, \cite{LS}). 
 Consequently, we cannot classify positive semi-definite Schr\"odinger forms to subcritical and critical ones
 in terms of $\lambda(\mu)$. 

 From Section 4, we treat the symmetric Hunt process $X$ generated by the regular Dirichlet space 
 $(\cE,\cD(\cE))$.  By the assumption on $(\cE,\cD(\cE))$, it is irreducible and transient. We assume, 
 in addition, that it 
 possesses the strong Feller property.
As remarked above,
 we do not know whether $T^\mu_t$ is probabilistically expressed as (\ref{p-e}).
 Suppose that $\mu$ is a smooth measure in local Kato class ($\mu\in\cK_{\loc}$ in notation),
  that is, for any compact set $F\subset E$ the restriction
 of $\mu$ to $F$ belongs to the Kato class $\cK$ 
 (Definition \ref{def-Kato}).\footnote{By the regularity of $(\cE,\cD(\cE))$, 
 a measure in $\cK_{\loc}$ is Radon.}
 Using facts in Albeverio and Ma \cite{AM} we can show that $(\cE^\mu,\cD(\cE)\cap C_0(E))$ 
 is closable and its semigroup $T^\mu_t$ has a probabilistic representation (\ref{p-e}). 
 As a result, we prove in Theorem \ref{Id} and Theorem \ref{cl} that
  the $(\cE^{\mu}, \cD(\cE^{\mu}))$ is regarded as the closed form generated by $p^\mu_tf(x)$.
These facts can be extended to a smooth measure $\mu$ such that 
there exists a compact set $K$ with Cap($K)=0$ 
and $\mu\in \cK^D_{\loc}$, $D=E\setminus K$, that is, for any compact set $F\subset D$ the restriction
 of $\mu$ to $F$ belongs to the Kato class $\cK$. Here Cap is the capacity associated with the regular Dirichlet 
 form $(\cE,\cD(\cE))$. Furthermore, we see that the closure of $(\cE^\mu,\cD(\cE)\cap C_0(D))$ is
 identified with that of $(\cE^\mu,\cD(\cE)\cap C_0(E))$.

We denote by $R(x,y)$ the 0-resolvent kernel of $X$ and define the potential of a measure $\mu$ by
$$
R\mu(x)=\int_ER(x,y)d\mu(y).
$$
 We introduce a subclass $\cK_{H}$ of $\cK_{\loc}$ as follows: A measure $\mu\in\cK_{\loc}$
 belongs to $\mu\in\cK_{H}$ if $\mu$ satisfies that $R\mu$ is in $\cD_{\loc}(\cE)\cap \sB_{b,\loc}(E)$ and  
 there exists an
 increasing sequence $\{K_n\}$ of compact sets such that $K_n\uparrow E$ and 
 \bequ\la{kh}
 \sup_n\iint_{K_n\times K_n^c}R(x,y)d\mu(x)d\mu(y)<\infty.
\eequ
Here $\cD_{\loc}(\cE)\cap \sB_{b,\loc}(E)$ is the set of functions locally in $\cD(\cE)\cap\sB_b(E)$. 
A measure $\mu\in\cK_{\loc}$ of finite energy integral belongs to $\cK_H$.  
 For $\mu\in\cK_H$ define a measure $\nu$ by $\nu={\mu}/{R\mu}$. We then see that
  $\nu$ belongs to $\cK_{\loc}$ and
   the Schr\"odinger form $(\cE^\nu,\cD(\cE^\nu))$ can be defined. In Section 5, we show that  for $\mu\in\cK_H$, 
   $(\cE^\nu,\cD(\cE^\nu))$  is always 
   critical. More precisely, $R\mu$ belongs to the extended Schr\"odinger space 
   $\cD_e(\cE^\nu))$ and $\cE^\nu(R\mu)=0$,
   that is, $R\mu$ is a ground state of $(\cE^\nu,\cD(\cE^\nu))$.  If $\mu$ is of 
   finite energy integral, then $R\mu$ is in 
   the extended Dirichlet space ${\cD_e}(\cE)$; however, it does not hold generally.
    For this reason, it makes sense
   to introduce the extended Schr\"odinger space.

 Denoting $\cL$ the generator of $X$, we have  
  $$
  \cL R\mu+R\mu\cdot \nu=-\mu+\mu=0,
  $$
and a Hardy-type inequality
   \bequ\la{h-in}
  \int_Eu^2d\nu\le \cE(u),\ \ u\in \cD(\cE)\cap C_0(E) 
  \eequ
(Fitzsimmons \cite[Theorem 1.9]{Fi}). Owing to the criticality of $(\cE^\nu,\cD(\cE^\nu))$, we see that 
the inequality (\ref{h-in}) 
 is optimal in the sense that  the inequality (\ref{h-in}) fails to hold if the constant of the left hand side 
  is replaced by a constant bigger than 1. 
  We learn from \cite{P1}, \cite{P2} that the criticality of Schr\"odinger operators
   leads the optimality of  Hardy-type inequalities. For example, if $X$ is the absorbing Brownian motion 
   on $(0,\infty)$ and $\mu(dx)=x^{-3/2}dx$, then $\mu$ belongs to $\cK_H$ and $\nu$ equals
 $(1/4)x^{-2}dx$, which leads us to the classical Hardy inequality. The best constant $1/4$ appears naturally
 by calculating $\mu/R\mu$ and  the constant $3/2$ is determined by the condition $x^{-p}dx\in\cK_H$.
 
In Example \ref{ex5}, we treat the Dirichlet form $(\cE^{(\alpha)},\cD(\cE^{(\alpha)}))$ 
generated a transient symmetric
 $\alpha$-stable process on $\mathbb{R}^d$ ($0 < \alpha < 2,\ \alpha<d$) and consider the Schr\"odinger form 
with Hardy potential: for $u\in\cD(\cE^{(\alpha)}\cap C_0(\mathbb{R}^d)$
\begin{align*}
\cE^{\mu^\delta}(u)&=\cE^{(\alpha)}(u)-\int_{\mathbb{R}^d}u^2 d\mu^\delta,
\end{align*}
where
$\mu^\delta(dx)=\kappa(\delta)/{|x|^\alpha}dx$ and
\bequ\la{kapd}
\kappa(\delta)=\frac{2^{\alpha}\Gamma\left(\frac{\delta+\alpha}{2}\right)
\Gamma\left(\frac{d-\delta}{2}\right)}{\Gamma\left(\frac{\delta}{2}\right)
\Gamma\left(\frac{d-\delta-\alpha}{2}\right)}\ \ (0\le\delta\le d-\alpha).
\eequ
Noting that Cap($\{0\})=0$ and $\mu^\delta$ is in the local Kato class on $\mathbb{R}^d\setminus\{0\}$, 
we see from Theorem \ref{cl}  
 that $(\cE^{\mu^\delta}, C_0^\infty(\mathbb{R}^d))$ 
is closable, Denote by $\cD(\cE^{\mu^\delta})$ and $\cD_e(\cE^{\mu^\delta})$ the closure and 
its extended Schr\"odinger space.
Let $\delta^*=(d-\alpha)/2$ and $\kappa^*=\kappa(\delta^*)$. Then it is known that
\bequ\la{best}
\kappa^*=\frac{2^{\alpha}
\Gamma\left(\frac{d+\alpha}{4}\right)^2}{\Gamma\left(\frac{d-\alpha}{4}\right)^2}
\eequ
is the best constant of the Hardy inequality: for $u\in\cD(\cE^{(\alpha)})$
\bequ\la{hard}
\kappa^*\int_{{\mathbb R}^d}u^2\frac{1}{|x|^\alpha}dx\le \cE^{(\alpha)}(u)
\eequ
(e.g. \cite{BGJP}).
We show in this example that the power $\delta^*$ is determined by the condition $|x|^{-p}dx\in\cK_H$ and 
the best constant $\kappa^*$ comes from the calculation of $\mu^{\delta^*}/R\mu^{\delta^*}$, where 
$R$ is the resolvent kernel of the symmetric
 $\alpha$-stable process.
  We have to say that 
 the argument in this section
 is strongly motivated by that in Miura \cite{Mi}, \cite{Mi1}.

\medskip 
 If $X$ has no killing inside, the semigroup $\widetilde{p}^\nu_t$ defined by
 $$
 \widetilde{p}^\nu_tf(x)=E_x\left(\frac{1}{R\mu(X_0)}\exp\left(\int_0^t\frac{dA^\mu_s}{R\mu(X_s)}\right)
 R\mu(X_t)f(X_t)\right)
 $$
 turns out to be Markovian, $\widetilde{p}^\nu_t1(x)\le 1$ and the Dirichlet form generated by the semigroup 
 $\{\widetilde{p}^\nu_t\}$ equals $(\cE^{\nu,R\mu},\cD(\cE^{\nu,R\mu}))$. The criticality of $(\cE^\nu,\cD(\cE^\nu))$ implies 
 the recurrence of $(\cE^{\nu,R\mu},\cD(\cE^{\nu,R\mu}))$.

\medskip
In Section 6,  we consider again the Dirichlet form $(\cE^{(\alpha)},\cD(\cE^{(\alpha)}))$ and 
 the Schr\"odinger form $(\cE^{\mu^\delta},\cD(\cE^{\mu^\delta}))$ in Example \ref{ex5}.
 We know from \cite[Figure 1]{BGJP} that  
$$
\kappa(\delta)<\kappa^*\ \text{for}\  \delta\not=\delta^*\ \ \text{and}\ \ 
\kappa(\hat{\delta})=\kappa(\delta)\ \text{for}\ \hat{\delta}=d-\alpha-\delta.
$$
Moreover, we see from these facts that 
$$
\lambda(\mu^{\delta^*})=1\ \ \text{and} \ \ \lambda(\mu^\delta)>1\ \text{for}\ \delta\not=\delta^*.
$$
Moreover, it is known in \cite[Theorem 3.1, Theorem 5.4]{BGJP}) that 
the function $|x|^{-\delta},\ 0\le\delta\le\delta^*,$ 
satisfies $p^{\mu^{\delta}}_t|x|^{-\delta}=|x|^{-\delta}$
 and 
the Dirichlet form defined by 
$h$-transform of $\cE^{\mu^\delta}$ by $|x|^{-\delta}$ is written as 
\bequ\la{del}
\cE^{\mu^{\delta},|x|^{-\delta}}(u)=\frac{1}{2} \mathcal{A} (d,\alpha)
\int \!\!\! \int_{\mathbb{R}^d \times \mathbb{R}^d \setminus \triangle }
\frac{(u(x) - u(y))^2}{|x - y|^{d + \alpha}|x|^\delta|y|^\delta} dx dy.
\eequ
In particular, we see that for $0\le\delta\le\delta^*$, 
$|x|^{-\delta}$ belongs to $\cH^+(\mu^\delta)$. 
In this section, we give another proof that 
$(\cE^{\mu^{\delta^*}},\cD(\cE^{\mu^{\delta^*}}))$ is critical in our sense. Indeed, we shall prove that if $\delta=\delta^*$ the corresponding Dirichlet form in (\ref{del}) is recurrent
by showing that 1 is contains 
in the extended Dirichlet space $\cD_e(\cE^{\mu^{\delta^*},|x|^{-\delta^*}})$
 and $\cE^{\mu^{\delta^*},|x|^{-\delta^*}}(1)=0$. 
As a result, we see that the function $|x|^{-\delta^*}$ belongs to the extended Schr\"odinger space
of $\cD_e(\cE^{\mu^{\delta^*}})$ and  $\cE^{\mu^{\delta^*}}(|x|^{-\delta^*})=0$,
that is, $|x|^{-\delta^*}$ is a ground state. Note that $|x|^{-\delta^*}$  
does not belong to the extended Dirichlet space $\cD_e(\cE^{(\alpha)})$. 
Indeed, if $|x|^{-\delta^*}$ belongs to $\cD_e(\cE^{(\alpha)})$,
it must be in $L^2(|x|^{-\alpha} dx)$ by the Hardy inequality (\ref{hard}). 
In other words, if $\delta=\delta^*$, then the extended Schr\"odinger space strictly 
contains the extended Dirichlet space. 
On the other hand, if $\delta\not=\delta^*$, the extended Schr\"odinger space
is identical with $\cD_e(\cE^{(\alpha)})$ (Lemma 3.1),
 which does not contain $|x|^{-\delta}$ by the same reason above. 

We show that for $0\le\delta<\delta^*$ $(\cE^{\mu^\delta},\cD(\cE^{\mu^\delta}))$ is subcritical. As a result,
 we see that $(\cE^{\mu^{\delta},|x|^{-\delta}},\cD(\cE^{\mu^{\delta},|x|^{-\delta}}))$ is transient.
Finally we see that by the {\it inversion property} between $X^\delta$ and $X^{\hat{\delta}}$ 
 $(\cE^{\mu^\delta},\cD(\cE^{\mu^\delta}))$ is transient for $\delta^*<\delta\le d-\alpha$, 
 where $X^\delta$ and $X^{\hat{\delta}}$ are Markov processes generated by the Dirichlet form in (\ref{del}) 
 for $\delta$ and $\hat{\delta}$ respectively (Remark \ref{Inv}).

\section{Schr\"odinger forms}
Let $E$ be a locally compact separable metric space and $m$ a positive
Radon measure on $E$ with full topological support.
Let $(\cE, \cD(\cE))$ be a regular  Dirichlet form on
$L^2(X;m)$. We denote by $u\in \cD_{loc}(\cE)$ if for any relatively compact open set $D$ there exists a function $v\in\cD(\cE)$
such that $u=v$ $m$-a.e. on $D$. 
Let $\cD_e(\cE)$ be the family of $m$-measurable functions $u$ on
$E$ such that $|u| < \infty$ $m$-a.e. and there exists a
sequence $\{ u_{n}\}$ of functions in $\cD(\cE)$ such that $\lim_{n, m \to \infty} \cE(u_n - u_m) = 0$ and 
$\lim_{n \to \infty} u_{n} = u$ $m$-a.e.
We call $\cD_e(\cE)$ the
\emph{extended Dirichlet space} of $(\cE, \cD(\cE))$ and the sequence  $\{ u_{n}\}$ an 
\emph{approximating sequence} of $u\in\cD_e(\cE)$. We can extend the form $\cE$ to $\cD_e(\cE)$: 
for approximating sequences $\{ u_{n}\}$ and $\{v_{n}\}$ of  $u$ and $v\in\cD_e(\cE)$
$$
\cE(u,v)=\lim_{n\to\infty}\cE(u_n,v_n).
$$

\begin{defi}\label{def-transient}
 \begin{enumerate}[(1)]
  \item A Dirichlet form $(\cE, \cD(\cE))$ on $L^2(E;m)$ is said to be \emph{transient}
   if there exists a bounded
	function $g\in L^1(E;m)$ strictly positive $m$-a.e.  such that 
	\begin{equation*}
	 \int_E |u| g dm \le \sqrt{\cE(u)},\ \ u \in \cD(\cE). 
	\end{equation*}
  \item  A Dirichlet form $(\cE, \cD(\cE))$ on $L^2(E;m)$ is said to be \emph{recurrent} if the constant function 1
belongs to $\cD_e(\cE)$ and $\cE(1)=0$.
 \end{enumerate}
\end{defi}
For other characterizations of transience and recurrence, see \cite[Theorem 1.6.2, Theorem 1.6.3]{FOT}.
Throughout this paper, we assume that $(\cE, \cD(\cE))$ is transient and irreducible, i.e., the $L^2(E;m)$-Markov semigroup 
$\{T_t\}$ associated with $(\cE, \cD(\cE))$ satisfies that  
if an $m$-measurable set $A\subset E$ satisfies $T_t(1_Af)=1_AT_tf$ $m$-a.e. for any $f\in L^2(E;m)$ and any $t>0$, then $m(A)=0$ or $m(A^c)=0$.

We define the {\em (1-)capacity\/} $\capa$ associated with the Dirichlet
form $(\cE, \cD(\cE))$ as follows:
for an open set $O\subset E$,
\begin{equation*}
 \capa(O)=\inf\{\cE_{1}(u)\mid u\in \cD(\cE), u\geq 1,\ m\text{-a.e. on
  }O\}
\end{equation*}
and for a Borel set $A\subset E$,
\begin{equation*}
 \capa(A)=\inf\{\capa(O)\mid O\text{ is open, }O\supset  A\},
\end{equation*}
where $\cE_1 (u)=\cE(u)+(u,u)_m$. 

A statement depending on $x\in E$ is said to hold q.e. on $E$ if there
exists a set $N\subset E$ of zero capacity such that the statement is
true for every $x\in E\setminus N$.
``q.e.'' is an abbreviation of ``quasi-everywhere''.
A real valued function $u$ defined q.e. on $E$
is said to be {\em quasi-continuous\/} if for any
$\epsilon>0$ there exists 
an open set $G\subset E$ such that $\capa(G)<\epsilon$ and
$u|_{E\setminus G}$ is finite and  continuous.
Here, $u|_{E\setminus G}$ denotes the restriction of $u$ to $E\setminus
G$.
Each function $u$ in $\cD_e(\cE)$ admits a
quasi-continuous version $\tilde{u}$, that is,
$u=\tilde{u}$ $m$-a.e. 
In the sequel,
we always assume that every function $u\in \cD_e(\cE)$ is
represented by its quasi-continuous version.

\smallskip
We call  a positive Borel measure $\mu$ on $E$  {\it smooth} if it satisfies 

 \begin{enumerate}[(i)]
  \item $\mu$ charges no set of zero capacity,
\item  there exists an increasing sequence $\{F_n\}$ of closed sets such that 

\smallskip
a) $\mu(F_n)<\infty,\ n=1,2,\ldots,$

b) $\lim_{n\to\infty}{\rm Cap}(K\setminus F_n)=0$ for any compact set $K$.
 \end{enumerate}

Let $\mu$ be a positive smooth Radon measure. 
 We define a Schr\"odinger form on $L^2(E;m)$ by
\begin{equation}\la{sch-form}
\cE^{\mu} (u,v) = \cE(u,v) - \int_E uv d\mu,\ \ u,v\in  \cD(\cE)\cap C_0(E).
\end{equation}
We assume that $(\cE^{\mu},\cD(\cE)\cap C_0(E))$ is positive semi-definite, $\cE^\mu(u)\ge 0$ 
for $u\in\cD(\cE)\cap C_0(E)$, and closable. We denote by
 $(\cE^{\mu},\cD(\cE^\mu))$  its closure. By the regularity of $(\cE,\cD(\cE))$, $\cD(\cE)\subset \cD(\cE^\mu)$
 and 
 $$
 \int_E{u}^2d\mu\le\cE(u),\ \ \cE^\mu(u)=\cE(u)-\int_E{u}^2d\mu,\ \ u\in\cD(\cE).
 $$
The Schr\"odinger form $(\cE^\mu,\cD(\cE^\mu))$ is expressed by the non-positive self-adjoint operator
 $\cH^\mu$ as follows: 
$$
\cE^\mu(u)=\|\sqrt{-\cH^\mu}u\|_2^2,\ \ \cD(\cE^\mu)=\cD(\sqrt{-\cH^\mu}).
$$ 
 Then the $L^2(E;m)$-strong continuous semigroup $T^\mu_t:=\exp(t\cH^\mu)$ 
 is defined and it is contractive, $\|T^\mu_t\|_2\le 1$, where $\|\cdot\|_2$ is the operator 
norm on $L^2(E;m)$. 

A densely defined, closed, positive semi-definite symmetric bilinear form $(a,\cD(a))$ is said to be 
{\it positive preserving}  if  for $u\in\cD(a)$ $|u|$ belongs to $\cD(a)$ and $a(|u|)\leq a(u)$. 
It follow from \cite[Lemma 1.3.4]{Da} that the form $(\cE^\mu,\cD(\cE^\mu))$ is 
	{positive preserving} because $\cE^\mu(|u|)\leq \cE^\mu(u)$ for $u\in\cD(\cE)\cap C_0(E)$.
	As a result,  we see from \cite[Proposition 2]{Sch1} that 
	$(\cE^\mu,\cD(\cE^\mu))$ has the {\it Fatou property}, i.e., if $\{u_n\}\subset\cD(\cE^\mu)$
	 satisfies $\sup_n\cE^\mu(u_n)<\infty$ and $u_n\to u\in\cD(\cE^\mu)$ $m$-a.e., then 
	$\liminf_{n\to\infty}\cE^\mu(u_n)\geq\cE^\mu(u)$.
Hence, following \cite{Sch}, we can define a space ${\cD_e}(\cE^{\mu})$ in the way similar to the extended 
Dirichlet space: An $m$-measurable function $u$
 with $|u| < \infty$ $m$-a.e. is said to be in ${\cD_e}(\cE^{\mu})$ if there exists an $\cE^\mu$-Cauchy
sequence $\{ u_{n}\}\subset\cD(\cE^\mu)$ such that $\lim_{n \to \infty} u_{n} = u$ $m$-a.e. 
We call ${\cD_e}(\cE^{\mu})$ the {\it extended Schr\"{o}dinger  space}  
of $(\cE^\mu,\cD(\cE^\mu))$ and  the sequence $\{u_n\}$ an {\it approximating sequence} of $u$. For 
$u\in {\cD_e} (\cE^{\mu})$ and an approximating sequence $\{u_n\}$ of $u$ define
\begin{equation}
{\cE}^{\mu}(u)=\lim_{n\to\infty} \cE^{\mu}(u_n).
\end{equation}

We can give another definition of the extended Schr\"{o}dinger space through $h$-transform. 
We introduce the space of $T^\mu_t$-excessive functions:
	\begin{equation}\la{mu-exc}
		\cH^+(\mu) = \{ h \,\mid\,  0<h<\infty\ m\text{-a.e.},\   T_t^{\mu} h \le h\  m\text{-a.e.}\}.
\end{equation}		
For $h\in\cH^+(\mu)$ define the Dirichlet form $(\cE^{\mu,h}, \cD(\cE^{\mu,h}))$ by
\begin{equation}\la{h-dir}
\left\{
\begin{split}
& \cE^{\mu,h}(u,v) = \cE^{\mu}(uh,uh),\\
& \cD(\cE^{\mu,h}) = \left\{ u\in L^2(E;h^2m) \mid uh \in \cD(\cE^{\mu})
\right\},
\end{split}
\right.
\end{equation}
and let $\cD_e(\cE^{\mu,h})$ be the extended Dirichlet space of $(\cE^{\mu,h}, \cD(\cE^{\mu,h}))$. 
We then see that $u\in{\cD_e} (\cE^{\mu})$ is equivalent to $u/h\in{\cD_e} (\cE^{\mu,h})$
(\cite[Lemma 2.8]{T3}).

We define the criticality 
and subcriticality of Schr\"{o}dinger forms in the way similar to the  
recurrence and transience of Dirichlet forms. 

\begin{defi}\label{def-criticality} Let $\mu$ be a smooth Radon measure and 
$(\cE^{\mu}, \cD(\cE^{\mu}))$ the positive semi-definite Schr\"odinger form.
\begin{enumerate}[(1)]
\item $(\cE^{\mu}, \cD(\cE^{\mu}))$ is said to
	be \emph{subcritical} if there exists a bounded function $g$ in $L^1(E;m)$ strictly positive $m$-a.e. such that 
\bequ\la{tra}
	\int_E|u| g dm\le \sqrt{\cE^\mu(u)},\ \ u\in\cD_e(\cE^{\mu}).
\eequ
\item $(\cE^{\mu}, \cD(\cE^{\mu}))$ is
	 said to be \emph{critical} if there exists a function $\phi$ in $\cD_e(\cE^{\mu})$ strictly positive
	  $m$-a.e. such that $\cE^\mu(\phi)=0$. The function $\phi$ is said to be the {\it ground state}.
\end{enumerate}
\end{defi}

Define the operator $G^\mu$ on a positive function $f$ by
$$
G^\mu f(x)=\int_0^\infty T^\mu_tf(x)dt\ (\le +\infty).
$$

\begin{lem}\la{77}
Let $g$ be the function in Definition \ref{def-criticality} (1). Then $G^\mu g$ belongs to $\cD_e(\cE^\mu)$.
\end{lem}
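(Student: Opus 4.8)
The plan is to show that the $0$-resolvent $G^{\mu}g$ is the limit, in the $\cE^{\mu}$-norm, of the finite-time integrals $G^{\mu}_T g:=\int_0^T T^{\mu}_t g\,dt$, and that these belong to $\cD(\cE^{\mu})$ with uniformly bounded energy, so that $\{G^{\mu}_{T_n}g\}$ is an approximating sequence exhibiting $G^{\mu}g\in\cD_e(\cE^{\mu})$. First I would recall the standard spectral-calculus facts for the self-adjoint semigroup $T^{\mu}_t=\exp(t\cH^{\mu})$: for $g\in L^1\cap L^2$ (note $g$ is bounded and in $L^1$, hence in $L^2$ since $\int g^2\,dm\le \|g\|_\infty\int g\,dm<\infty$), the truncated resolvent $G^{\mu}_T g=\int_0^T e^{t\cH^{\mu}}g\,dt$ lies in $\cD(\cH^{\mu})\subset\cD(\cE^{\mu})$, and
\[
\cE^{\mu}(G^{\mu}_T g)=\big(-\cH^{\mu}G^{\mu}_T g,\,G^{\mu}_T g\big)_m=\big((I-T^{\mu}_T)g,\,G^{\mu}_T g\big)_m\le (g,G^{\mu}_T g)_m=\int_E g\,G^{\mu}_T g\,dm,
\]
using $\cH^{\mu}G^{\mu}_T g=T^{\mu}_T g-g$ and $T^{\mu}_T g\ge 0$. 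Since $G^{\mu}_T g\uparrow G^{\mu}g$ pointwise $m$-a.e.\ as $T\uparrow\infty$ (the integrand is non-negative), monotone convergence gives $\int_E g\,G^{\mu}_T g\,dm\uparrow\int_E g\,G^{\mu}g\,dm$.

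The next point is that this last integral is \emph{finite}, which is precisely where the subcriticality hypothesis \eqref{tra} enters — this is the main obstacle. The idea is self-referential: apply the inequality $\int_E|u|g\,dm\le\sqrt{\cE^{\mu}(u)}$ with $u=G^{\mu}_T g$ (which is non-negative, in $\cD(\cE^{\mu})\subset\cD_e(\cE^{\mu})$) to get $\int_E g\,G^{\mu}_T g\,dm\le\sqrt{\cE^{\mu}(G^{\mu}_T g)}$. Combining with the energy bound above yields $\cE^{\mu}(G^{\mu}_T g)\le\int_E g\,G^{\mu}_T g\,dm\le\sqrt{\cE^{\mu}(G^{\mu}_T g)}$, hence $\cE^{\mu}(G^{\mu}_T g)\le 1$ and $\int_E g\,G^{\mu}_T g\,dm\le 1$, \emph{uniformly in $T$}. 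Letting $T\to\infty$, monotone convergence gives $\int_E g\,G^{\mu}g\,dm\le 1<\infty$, and in particular $G^{\mu}g<\infty$ $m$-a.e.

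Finally I would check that $\{G^{\mu}_{T_n}g\}_{n}$ (for $T_n\uparrow\infty$) is an $\cE^{\mu}$-Cauchy sequence converging $m$-a.e.\ to $G^{\mu}g$. The $m$-a.e.\ convergence is monotone convergence again. For the Cauchy property, for $T<T'$ write $G^{\mu}_{T'}g-G^{\mu}_T g=\int_T^{T'}T^{\mu}_t g\,dt=T^{\mu}_T G^{\mu}_{T'-T}g$, which is non-negative and in $\cD(\cE^{\mu})$; the same computation as above gives
\[
\cE^{\mu}\big(G^{\mu}_{T'}g-G^{\mu}_T g\big)\le\int_E g\,\big(G^{\mu}_{T'}g-G^{\mu}_T g\big)\,dm=\int_E g\,G^{\mu}_{T'}g\,dm-\int_E g\,G^{\mu}_T g\,dm,
\]
and since $T\mapsto\int_E g\,G^{\mu}_T g\,dm$ is non-decreasing and bounded above by $1$, it is Cauchy, so the right-hand side tends to $0$ as $T,T'\to\infty$. (The inequality used here is $\cE^{\mu}(v)=((I-T^{\mu}_{T'-T})T^{\mu}_T g,\,v)_m\le(T^{\mu}_T g,v)_m\le(g,v)_m$ for $v=G^{\mu}_{T'}g-G^{\mu}_T g$, using $T^{\mu}_T g\le g$ pointwise — wait, that last step needs care, so instead bound $(T^{\mu}_T g,v)_m=(g,T^{\mu}_T v)_m\le(g,v)_m$ using self-adjointness and $T^{\mu}_T v\le v$, which holds since $v=T^{\mu}_T G^{\mu}_{T'-T}g$ and $t\mapsto T^{\mu}_t G^{\mu}_{T'-T}g$ decreases... more simply: $(T^{\mu}_T g, v)_m \le \|T^{\mu}_T\|_{2}\,\|g\|_2\,\|v\|_2$ only gives boundedness, so the cleanest route is to directly use $\cE^{\mu}(v)\le(g,v)_m$ via $v\ge 0$, $v\in\cD_e(\cE^{\mu})$, $(g,v)_m\le\sqrt{\cE^{\mu}(v)}$ by \eqref{tra}, giving $\sqrt{\cE^{\mu}(v)}\le\sqrt{(g,v)_m}$, then $\cE^{\mu}(v)\le(g,v)_m\to 0$.) Thus $\{G^{\mu}_{T_n}g\}$ is an approximating sequence for $G^{\mu}g$, proving $G^{\mu}g\in\cD_e(\cE^{\mu})$, and incidentally $\cE^{\mu}(G^{\mu}g)=\lim_n\cE^{\mu}(G^{\mu}_{T_n}g)\le\int_E g\,G^{\mu}g\,dm\le 1$.
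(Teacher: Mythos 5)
Your overall route is sound and runs parallel to the paper's proof, which simply invokes the arguments of \cite[Lemma 1.5.3]{FOT} (to get $\int_E g\,G^\mu g\,dm\le 1$ from (\ref{tra})) and of \cite[Theorem 1.5.4 (i)]{FOT} (to conclude $G^\mu g\in\cD_e(\cE^\mu)$); your bootstrap $\cE^\mu(G^\mu_Tg)\le(g,G^\mu_Tg)_m\le\sqrt{\cE^\mu(G^\mu_Tg)}$ is a legitimate self-contained substitute for the first citation, and working with truncated time integrals $G^\mu_Tg$ instead of the $\alpha$-resolvents $G^\mu_\alpha g$ used there is immaterial.

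The one genuine gap is the justification of the key inequality in your Cauchy step, $\cE^\mu(v)\le(g,v)_m$ for $v=G^\mu_{T'}g-G^\mu_Tg$. The two routes you sketch before asserting it ``directly'' do not work: $T^\mu_t$ is a Schr\"odinger semigroup, not sub-Markovian, so the step $(T^\mu_Tg,v)_m\le(g,v)_m$ cannot be had from a pointwise domination $T^\mu_Tg\le g$, and the monotonicity of $t\mapsto T^\mu_tG^\mu_{T'-T}g$ you appeal to would require $G^\mu_{T'-T}g$ to be $T^\mu_t$-excessive, which is exactly the kind of information this lemma is building towards; also (\ref{tra}) alone only gives the reverse-type bound $(g,v)_m\le\sqrt{\cE^\mu(v)}$. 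The inequality is nevertheless true, and the clean proof is the spectral calculus the paper itself uses in the computations preceding Lemma \ref{ex-ine}: writing $-\cH^\mu=\int_0^\infty\lambda\,dE_\lambda$ and $v=\int_0^\infty\varphi(\lambda)\,dE_\lambda g$ with $\varphi(\lambda)=\int_T^{T'}e^{-\lambda t}\,dt$, one has
\begin{equation*}
\cE^\mu(v)=\int_0^\infty\lambda\,\varphi(\lambda)^2\,d(E_\lambda g,g)
\le\int_0^\infty\varphi(\lambda)\,d(E_\lambda g,g)=(g,v)_m,
\end{equation*}
because $\lambda\varphi(\lambda)=e^{-\lambda T}-e^{-\lambda T'}\in[0,1]$; the same computation also re-derives your first-step bound $\cE^\mu(G^\mu_Tg)\le(g,G^\mu_Tg)_m$. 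With this patch the argument is complete: $T\mapsto(g,G^\mu_Tg)_m$ is nondecreasing and bounded by $1$, so $\cE^\mu(G^\mu_{T'}g-G^\mu_Tg)\to0$, and $\{G^\mu_{T_n}g\}$ is an approximating sequence for $G^\mu g$, which is finite $m$-a.e.\ since $\int_E g\,G^\mu g\,dm\le1$ and $g>0$ $m$-a.e.
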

\begin{proof}
By the same argument as in \cite[Lemma 1.5.3]{FOT} we have the next inequality: 
for any non-negative $f\in L^1(E;m)\cap L^2(E;m)$
$$
\sup_{u\in\cD(\cE^\mu)}\frac{(|u|,f)}{\sqrt{\cE^\mu(u)}}=\sqrt{\int_Ef G^\mu f\, dm}\ (\le +\infty).
$$
Hence, the equation (\ref{tra}) implies $\int_Eg G^\mu g\, dm\le 1$. Moreover, by the same argument as in Theorem 1.5.4 (i) in \cite{FOT} we see 
that $G^\mu g$ belongs to $\cD_e(\cE^\mu)$.
\end{proof}

\begin{lem}\la{rc}
Suppose $(\cE^\mu,\cD(\cE^\mu))$ is critical and let $\phi$ be the ground state. 
Then $\phi$ belongs to $\cH^+(\mu)$, more precisely, $T^\mu_t\phi=\phi$ $m$-a.e.
\end{lem}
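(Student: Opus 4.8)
The plan is to prove the two assertions in turn: first that $T^\mu_t\phi\le\phi$ $m$-a.e.\ for every $t>0$, which is precisely the statement $\phi\in\cH^+(\mu)$, and then to upgrade this inequality to the equality $T^\mu_t\phi=\phi$ by passing to the $\phi$-transformed Dirichlet form.

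For the excessivity step I would start from an approximating sequence $\{w_n\}\subset\cD(\cE^\mu)$ of $\phi$, so that $\{w_n\}$ is $\cE^\mu$-Cauchy, $w_n\to\phi$ $m$-a.e., and $\cE^\mu(w_n)\to\cE^\mu(\phi)=0$ by definition of the extended form. Since $(\cE^\mu,\cD(\cE^\mu))$ is positivity preserving, the functions $u_n:=|w_n|$ again lie in $\cD(\cE^\mu)$, satisfy $\cE^\mu(u_n)\le\cE^\mu(w_n)\to 0$, and, as $\phi>0$ $m$-a.e., $u_n\to\phi$ $m$-a.e.\ with $u_n\ge 0$. Fixing $t>0$ and using the spectral resolution of the non-positive self-adjoint operator $\cH^\mu$ together with the elementary bound $(1-e^{-t\lambda})^2\le t\lambda$ for $\lambda\ge 0$, one obtains $\|u_n-T^\mu_t u_n\|_2^2\le t\,\cE^\mu(u_n)\to 0$. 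Passing to a subsequence along which $u_n-T^\mu_t u_n\to 0$ $m$-a.e., one gets $T^\mu_t u_n\to\phi$ $m$-a.e.; since $T^\mu_t$ extends to a positivity preserving operator on non-negative functions enjoying the monotone-convergence (Fatou) property, Fatou's lemma applied to $u_n\ge 0$ yields $T^\mu_t\phi\le\liminf_n T^\mu_t u_n=\phi$ $m$-a.e. As $t>0$ was arbitrary, $\phi\in\cH^+(\mu)$.

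For the invariance step, now that $\phi\in\cH^+(\mu)$ the $\phi$-transformed form $(\cE^{\mu,\phi},\cD(\cE^{\mu,\phi}))$ of (\ref{h-dir}) is a Dirichlet form on $L^2(E;\phi^2m)$, unitarily equivalent to $(\cE^\mu,\cD(\cE^\mu))$ via $f\mapsto\phi f$, so that its semigroup is $T^{\mu,\phi}_t f=\phi^{-1}T^\mu_t(\phi f)$. By \cite[Lemma 2.8]{T3}, $\phi\in\cD_e(\cE^\mu)$ is equivalent to $1=\phi/\phi\in\cD_e(\cE^{\mu,\phi})$, and then $\cE^{\mu,\phi}(1)=\cE^\mu(\phi)=0$; by Definition \ref{def-transient}(2) the Dirichlet form $(\cE^{\mu,\phi},\cD(\cE^{\mu,\phi}))$ is recurrent, hence conservative, i.e.\ $T^{\mu,\phi}_t 1=1$ $\phi^2m$-a.e.\ (\cite[Lemma 1.6.5]{FOT}). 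Reading this identity back through the transform gives $\phi^{-1}T^\mu_t\phi=1$, that is $T^\mu_t\phi=\phi$ $m$-a.e.

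I expect the invariance step to be the crux. The entire reason for introducing the extended Schr\"odinger space is that the ground state $\phi$ need not lie in $L^2(E;m)$ (it belongs to $\cD_e(\cE^\mu)$ but in general not to $\cD(\cE^\mu)$), so one cannot simply invoke $\cH^\mu\phi=0$ to obtain invariance directly; the first step only delivers excessivity, and the promotion to an exact invariance is supplied by the conservativeness of the associated recurrent Dirichlet form, which the $h$-transform packages cleanly. A subsidiary technical point to record is that the extension of $T^\mu_t$ to non-negative functions indeed satisfies the monotone-convergence property used above, which follows from its construction as the increasing limit of the $L^2$-operators $T^\mu_t(\,\cdot\wedge n\mathbf{1}_{A_n})$ over an exhaustion $A_n\uparrow E$ with $m(A_n)<\infty$.
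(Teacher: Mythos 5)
Your proof is correct, but it takes a genuinely different and longer route than the paper. The paper's argument is a one-liner: by Lemma \ref{ex-ine} the semigroup $T^\mu_t$ extends to $\cD_e(\cE^\mu)$ with $\tfrac1t\|u-T^\mu_tu\|_2^2\le\cE^\mu(u)$ (the difference $u-T^\mu_tu$ is an $L^2$-limit of $u_n-T^\mu_tu_n$ even when $u\notin L^2$), so putting $u=\phi$ and using $\cE^\mu(\phi)=0$ gives $\|\phi-T^\mu_t\phi\|_2=0$, i.e.\ excessivity and exact invariance simultaneously, with no $h$-transform at all; your stated reason for why a direct argument should not work ("one cannot simply invoke $\cH^\mu\phi=0$") overlooks exactly this extension lemma, which is proved immediately after Lemma \ref{rc} and is cited as its proof. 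Your two-step route — (a) excessivity of the positivity extension of $T^\mu_t$ via the spectral bound on $u_n=|w_n|$, a.e.\ subsequences and Fatou, then (b) invariance via $1\in\cD_e(\cE^{\mu,\phi})$, $\cE^{\mu,\phi}(1)=\cE^\mu(\phi)=0$, recurrence and hence conservativeness of the $\phi$-transformed Dirichlet form — is sound and free of circularity (step (a) is what licenses forming the sub-Markovian $\phi$-transform in step (b), and recurrence there needs only \cite[Lemma 2.8]{T3} and Definition \ref{def-transient}). What your version buys is that it works directly with the monotone (positivity) extension of $T^\mu_t$ appearing in the definition of $\cH^+(\mu)$, so the compatibility between that extension and the $\cD_e(\cE^\mu)$-extension of Lemma \ref{ex-ine} — which the paper's one-line proof leaves implicit — is never needed; the cost is invoking the machinery that $(\cE^{\mu,\phi},\cD(\cE^{\mu,\phi}))$ is a Dirichlet form and that recurrence implies conservativeness, and in effect re-deriving in reverse the step the paper performs later in Theorem \ref{tai}(ii).
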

\begin{proof}
We see from Lemma \ref{ex-ine} below.
\end{proof}


\begin{rem}  \la{g-u} If $h\in\cD_e(\cE^\mu)$ satisfies $\cE^\mu(h)=0$, then $h$ is a weak 
solution to $\cH^\mu h=0$ in the sense that $\cE^\mu(h, \varphi)=0$ for any 
$\varphi\in \cD(\cE)\cap C_0(E)$ because
 $(\cE^{\mu}, \cD(\cE^{\mu}))$ is positive semi-definite. Moreover, if $(\cE^\mu,\cD(\cE^\mu))$ is critical and 
 $\phi$ is a ground state, then the function $h$ is written as $h=c\phi$ ($c$ is a constant), in particular, 
 not sign-changing. Indeed, by Theorem \ref{tai}, the Dirichlet form $(\cE^{\mu,\phi},\cD(\cE^{\mu,\phi}))$
  is recurrent and
$$
\cE^{\mu,\phi}(h/\phi)=\cE^{\mu}(h)=0,
$$
which implies $h/\phi$ is a constant $m$-a.e. by \cite[Theorem 1]{Ka}. 
\end{rem}

\begin{lem}\la{fo1}
If $u\in\cD_{e}(\cE^\mu)$ and $\{u_n\}_{n=1}^\infty$ is 
 an approximating sequence of $u$, then $\lim_{n\to\infty}\cE^\mu(u-u_n)=0$.
\end{lem}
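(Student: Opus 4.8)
The plan is to read off the conclusion directly from the definition of the extended Schr\"odinger space ${\cD_e}(\cE^{\mu})$, using the fact, already established above, that the value $\cE^{\mu}(v)$ of the extended form at $v\in{\cD_e}(\cE^{\mu})$ does not depend on the choice of approximating sequence (this is exactly where the Fatou property of $(\cE^{\mu},\cD(\cE^{\mu}))$ is needed). The whole argument is the one used for extended Dirichlet spaces in \cite{FOT}.

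Concretely, first I would fix $n$ and note that $u-u_n\in{\cD_e}(\cE^{\mu})$, since $u\in{\cD_e}(\cE^{\mu})$, $u_n\in\cD(\cE^{\mu})\subset{\cD_e}(\cE^{\mu})$, and the extended space is linear. Then I claim $\{u_k-u_n\}_{k\ge 1}$ is an approximating sequence of $u-u_n$: it lies in $\cD(\cE^{\mu})$; it is $\cE^{\mu}$-Cauchy because $\cE^{\mu}\big((u_k-u_n)-(u_j-u_n)\big)=\cE^{\mu}(u_k-u_j)\to 0$ as $j,k\to\infty$; and $u_k-u_n\to u-u_n$ $m$-a.e.\ as $k\to\infty$ since $u_k\to u$ $m$-a.e. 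Consequently, by the definition of the extended form,
$$
\cE^{\mu}(u-u_n)=\lim_{k\to\infty}\cE^{\mu}(u_k-u_n),
$$
the right-hand limit existing because $\sqrt{\cE^{\mu}(u_k-u_n)}$ is a Cauchy sequence of real numbers (by the triangle inequality for the seminorm $\sqrt{\cE^{\mu}}$ and the Cauchy property of $\{u_k\}$).

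Finally I would close the argument with the Cauchy property of the original approximating sequence: given $\eps>0$, choose $N$ with $\cE^{\mu}(u_k-u_n)<\eps$ for all $k,n\ge N$; fixing $n\ge N$ and letting $k\to\infty$ in the displayed identity gives $\cE^{\mu}(u-u_n)\le\eps$, and since $\eps>0$ and $n\ge N$ are arbitrary this yields $\lim_{n\to\infty}\cE^{\mu}(u-u_n)=0$. I do not anticipate a genuine obstacle here; the only delicate point is the well-definedness of $\cE^{\mu}$ on ${\cD_e}(\cE^{\mu})$, which is what licenses evaluating $\cE^{\mu}(u-u_n)$ along the particular approximating sequence $\{u_k-u_n\}_k$, and that has already been secured via the Fatou property.
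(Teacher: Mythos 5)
Your proposal is correct and follows essentially the same route as the paper: fix $n$, observe that $\{u_m-u_n\}_m$ is an approximating sequence of $u-u_n$, so $\cE^\mu(u-u_n)=\lim_m\cE^\mu(u_m-u_n)\le\varepsilon$ for $n\ge N$ by the Cauchy property. Your extra remarks on the well-definedness of $\cE^\mu$ on $\cD_e(\cE^\mu)$ via the Fatou property only make explicit what the paper uses implicitly.
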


\begin{proof}
For any $\varepsilon>0$ there exists $N$ such that 
$\cE^\mu(u_m-u_n)	\leq\varepsilon$ for $m,n\geq N$. 
For a fixed $n$, $\{u_m-u_n\}_{m=1}^\infty$ is an 
approximating sequence of $u-u_n$ and so 
$$
\cE^\mu(u-u_n)=\lim_{m\to\infty}\cE^\mu(u_m-u_n)
\leq\varepsilon,\ \ n\geq N.
$$
\end{proof}

\begin{lem}\la{ex-h}
If $(\cE^\mu,\cD(\cE^\mu))$ is subcritical, then $(\cD_e(\cE^\mu),\cE^\mu)$ is a Hilbert \\
space.
\end{lem}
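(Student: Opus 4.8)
The plan is to reproduce, in this setting, the classical fact that the extended Dirichlet space of a transient Dirichlet form is a Hilbert space (\cite[Theorem 1.5.3]{FOT}), with the strictly positive reference function $g$ of Definition \ref{def-criticality}~(1) playing the role of the reference function used there. Two things have to be established: that $\cE^\mu$, extended to $\cD_e(\cE^\mu)$ and polarized, is a genuine inner product, and that the resulting inner‑product space is complete.

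For the first point I would argue as follows. If $u\in\cD_e(\cE^\mu)$ satisfies $\cE^\mu(u)=0$, then (\ref{tra}) gives $\int_E|u|g\,dm\le 0$, and since $g>0$ $m$-a.e.\ this forces $u=0$ $m$-a.e.; hence $\cE^\mu$ is positive definite on $\cD_e(\cE^\mu)$. That $\cD_e(\cE^\mu)$ is a linear space on which $\sqrt{\cE^\mu}$ is a seminorm, and that $\cE^\mu(u)$ does not depend on the chosen approximating sequence, are routine consequences of the triangle inequality for $\sqrt{\cE^\mu}$ together with the Fatou property recorded above (the termwise sum of two approximating sequences is again $\cE^\mu$-Cauchy and converges $m$-a.e.\ to the sum). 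So $(\cD_e(\cE^\mu),\cE^\mu)$ is a pre-Hilbert space.

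Completeness is the substantive step. Given an $\cE^\mu$-Cauchy sequence $\{u_n\}\subset\cD_e(\cE^\mu)$, applying (\ref{tra}) to $u_n-u_\ell$ shows $\{u_n\}$ is Cauchy in $L^1(E;g\,dm)$, which is a finite measure mutually absolutely continuous with $m$; hence $u_n\to u$ in $L^1(E;g\,dm)$ for some $u$ finite $m$-a.e., and $u_n\to u$ $m$-a.e.\ along a subsequence. Next, for each $n$ I would invoke Lemma \ref{fo1} on an approximating sequence of $u_n$ to choose $v_n\in\cD(\cE^\mu)$ with $\cE^\mu(u_n-v_n)\le 1/n$; then $\{v_n\}$ is $\cE^\mu$-Cauchy, and by (\ref{tra}) $\int_E|v_n-u|g\,dm\le 1/\sqrt n+\int_E|u_n-u|g\,dm\to 0$, so a subsequence $\{v_{n_k}\}$ converges to $u$ $m$-a.e. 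This exhibits $\{v_{n_k}\}\subset\cD(\cE^\mu)$ as an approximating sequence of $u$, whence $u\in\cD_e(\cE^\mu)$.

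It remains to verify $\cE^\mu(u_n-u)\to 0$. For fixed $k$ the sequence $\{v_{n_k}-v_{n_\ell}\}_\ell$ is $\cE^\mu$-Cauchy and converges $m$-a.e.\ to $v_{n_k}-u$, hence is an approximating sequence of $v_{n_k}-u$, so $\cE^\mu(v_{n_k}-u)=\lim_{\ell\to\infty}\cE^\mu(v_{n_k}-v_{n_\ell})$, and the right side tends to $0$ as $k\to\infty$ by the Cauchy property of $\{v_n\}$. Combining with $\cE^\mu(u_{n_k}-v_{n_k})\le 1/n_k$ and the triangle inequality for $\sqrt{\cE^\mu}$ gives $\cE^\mu(u_{n_k}-u)\to 0$, and the Cauchy property of the full sequence $\{u_n\}$ then upgrades this to $\cE^\mu(u_n-u)\to 0$. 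Thus $(\cD_e(\cE^\mu),\cE^\mu)$ is complete, hence a Hilbert space. I expect the main obstacle to be precisely the bookkeeping of the two nested subsequence extractions — converting $\cE^\mu$-Cauchyness into $m$-a.e.\ convergence while keeping the limit inside $\cD_e(\cE^\mu)$ — and it is exactly here that the strictly positive $g$ from subcriticality is indispensable, since it is what makes the $L^1(E;g\,dm)$-control available; everything else is the standard extended-space argument as in \cite[Theorem 1.5.4]{FOT}.
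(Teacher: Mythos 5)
Your proof is correct and follows essentially the same route as the paper: use the subcriticality inequality (\ref{tra}) to turn an $\cE^\mu$-Cauchy sequence into an $L^1(E;g\,dm)$-Cauchy sequence, extract an $m$-a.e.\ convergent subsequence, identify the limit as an element of $\cD_e(\cE^\mu)$, and conclude $\cE^\mu$-convergence via Lemma \ref{fo1}. The only difference is that you are somewhat more careful than the paper, which passes directly from a Cauchy sequence in $\cD_e(\cE^\mu)$ to membership of the limit in $\cD_e(\cE^\mu)$; your intermediate approximation by elements $v_n\in\cD(\cE^\mu)$ (and your explicit check of positive definiteness from $g>0$ $m$-a.e.) fills in details the paper leaves implicit.
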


\begin{proof}
Let $\{u_n\}$ be a Cauchy sequence with respect to 
$\cE^\mu$.   Then $\{u_n\}$ is also a Cauchy sequence in 
$L^1(E;gm)$ by (\ref{tra}) and so there exists a subsequence 
 $\{u_k\}$ of $\{u_n\}$ converges to 
$u\in L^1(E;gm)$ $m$-a.e. Hence $u$ belongs to $\cD_e(\cE^\mu)$ and 
$\{u_k\}$ converges to $u$ in $\cE^\mu$ by Lemma \ref{fo1}. Since $\{u_n\}$
 be a Cauchy sequence in $\cE^\mu$, $\{u_n\}$ itself 
 converges to $u$ in $\cE^\mu$.
\end{proof}

We can show in the same argument 
as in \cite[Lemma 3.13]{T4} that the semigroup 
$T^\mu_t$, $t>0$ can be extended to an operator 
form $\cD_e(\cE^\mu)$ to itself. 
For $u\in\cD(\cE^\mu)$ 
\begin{align*}
\frac{1}{t}\|u-T_t^\mu u\|_2^2
=\frac{1}{t}(u-T_t^\mu u,u)
-\frac{1}{t}(u-T_t^\mu u,T_t^\mu u)\leq\frac{1}{t}(u-T_t^\mu u,u)
\end{align*}
because 
$$
(u-T_t^\mu u,T_t^\mu u)=\int_0^\infty\left( 1-e^{-\lambda t}\right)
e^{-\lambda t}d(E_\lambda u,u)\geq 0 
$$
by the spectral decomposition theorem, $-H^\mu=\int_0^\infty\lambda dE_{\lambda}$. Here 
$\{E_\lambda\}_{\lambda\ge 0}$ is a resolution of the identity. Hence we have

$$
\frac{1}{t}\|u-T_t^\mu u\|_2^2\le\cE^\mu(u).
$$

Since
\begin{align*}
\frac{1}{t}(u-T_t^\mu u,u)&
=\int_0^\infty\left(\frac{1-e^{-\lambda t}}
{t}\right)d(E_\lambda u,u)\\
&\le \int_0^\infty\lambda d(E_\lambda u,u)\\
&=\cE^\mu(u).
\end{align*}

For $u\in\cD_e(\cE^\mu)$, let $\{u_n\}_{n=1}^\infty\subset \cD(\cE^\mu) $ 
be an approximating sequence of $u$. Then 
\begin{align*}
\frac{1}{t}\|(u_n-T_t^\mu u_n)-
(u_m-T_t^\mu u_m)\|_2^2&
=\frac{1}{t}\|(u_n-u_m)-T^\mu_t(u_n-u_m)\|_2^2\\
&\leq \cE^\mu(u_n-u_m),
\end{align*}
and thus $(u_n-T_t^\mu u_n)$
 converges to a $v\in L^2(E;m)$ strongly. 
 Let $\{u'_n\}_{n=1}^\infty\subset \cD(\cE^\mu) $ 
 be another 
approximating sequence of $u$. Then 
\begin{align*}
&\frac{1}{t}\|(u_n-T_t^\mu u_n)
-(u'_n-T_t^\mu u'_n)\|_2^2\leq \cE^\mu(u_n-u'_n)\\
&\qquad = \cE^\mu(u_n)+\cE^\mu(u'_n)
-2\cE^\mu(u_n,u'_n)\\
&\qquad \ \ \longrightarrow \cE^\mu(u)
+\cE^\mu(u)-2\cE^\mu(u)=0
\end{align*}
as $n\to \infty$. Hence the function $v\in L^2(E;m)$
 is independent of the choice of the approximating sequence.

Using the spectral decomposition theorem again, we have for $u\in\cD(\cE^\mu)$
$$
\cE^\mu(T_t^\mu u)=\int_0^\infty\lambda\left( e^{-2\lambda t}\right)^2 d(E_\lambda u,u)\leq
\int_0^\infty\lambda d(E_\lambda u,u)\leq\cE^\mu(u),
$$
and $\cE^\mu(T_t^\mu u_n
-T_t^\mu u_m)\leq \cE^\mu(u_n-u_m)$. 
There exists a subsequence $\{u_{n_k}\}_{k=1}^\infty$ of
 $\{u_n\}_{n=1}^\infty$ such that $T_t^\mu 
 u_{n_k}=u_{n_k}-(u_{n_k}-T_t^\mu u_{n_k})$ 
 converges to $u-v$ $m$-a.e. Hence, $u-v\in\cD_e(\cE^\mu)$
  and $\{T_t^\mu u_{n_k}\}_{k=1}^\infty$
   is an approximating sequence of $u-v$.
The semigroup $T_t^\mu$ can be extended to
 $\cD_e(\cE^\mu)$ by $T_t^\mu u=u-v$. 
 We then see that 
$\lim_{n\to\infty}\|(u_n-T_t^\mu u_n)-(u-T_t^\mu u)\|_2=0$ and 
\begin{align*}
\frac{1}{t}\|u-T_t^\mu u\|_2^2&=\frac{1}{t}\|v\|_2^2=\lim_{n\to\infty}\frac{1}{t}\|u_{n}
-T_t^\mu u_{n}\|_2^2\\
&\leq\lim_{n\to\infty}\cE^\mu(u_{n})=\cE^\mu(u).
\end{align*}
Hence, we have
\begin{lem}\la{ex-ine}
The semigroup $T_t^\mu$ can be uniquely extended to a linear 
operator on $\cD_e(\cE^\mu)$ and for $u\in\cD_e(\cE^\mu)$
	$$
	\frac{1}{t}\|u-T_t^\mu u\|_2^2\leq \cE^\mu(u).
	$$
\end{lem}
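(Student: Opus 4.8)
The plan is to first establish the inequality on the dense core $\cD(\cE^\mu)$ by pure spectral calculus, and then propagate it to the extended space $\cD_e(\cE^\mu)$ by a Cauchy-sequence argument that simultaneously produces the extension of $T_t^\mu$. Writing $-\cH^\mu=\int_0^\infty\lambda\,dE_\lambda$ with $\{E_\lambda\}$ the resolution of the identity, for $u\in\cD(\cE^\mu)$ one has
$$
(u-T_t^\mu u,T_t^\mu u)=\int_0^\infty(1-e^{-\lambda t})e^{-\lambda t}\,d(E_\lambda u,u)\ge 0,
$$
so that $\tfrac1t\|u-T_t^\mu u\|_2^2\le\tfrac1t(u-T_t^\mu u,u)$, and since $(1-e^{-\lambda t})/t\le\lambda$, the latter is $\le\int_0^\infty\lambda\,d(E_\lambda u,u)=\cE^\mu(u)$. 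This is the desired estimate on $\cD(\cE^\mu)$, and it is the only place the self-adjointness and the $L^2$-contractivity of $T_t^\mu$ enter.

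Next I would take $u\in\cD_e(\cE^\mu)$ with an approximating sequence $\{u_n\}\subset\cD(\cE^\mu)$. Applying the $\cD(\cE^\mu)$-estimate to the differences gives $\tfrac1t\|(u_n-T_t^\mu u_n)-(u_m-T_t^\mu u_m)\|_2^2\le\cE^\mu(u_n-u_m)\to0$, so $u_n-T_t^\mu u_n$ converges strongly in $L^2(E;m)$ to some $v$. Independence of $v$ from the approximating sequence follows by comparing two sequences $\{u_n\},\{u_n'\}$ for $u$ via $\cE^\mu(u_n-u_n')=\cE^\mu(u_n)+\cE^\mu(u_n')-2\cE^\mu(u_n,u_n')\to0$ (using the definition of $\cE^\mu$ on $\cD_e(\cE^\mu)$). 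Similarly, the spectral bound $\cE^\mu(T_t^\mu u_n-T_t^\mu u_m)=\int_0^\infty\lambda e^{-2\lambda t}\,d(E_\lambda(u_n-u_m),u_n-u_m)\le\cE^\mu(u_n-u_m)$ shows $\{T_t^\mu u_n\}$ is $\cE^\mu$-Cauchy. Passing to a subsequence along which $u_{n_k}-T_t^\mu u_{n_k}\to v$ and $u_{n_k}\to u$ both hold $m$-a.e., we get $T_t^\mu u_{n_k}\to u-v$ $m$-a.e., so $u-v\in\cD_e(\cE^\mu)$ with approximating sequence $\{T_t^\mu u_{n_k}\}$; one then sets $T_t^\mu u:=u-v$. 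Linearity is inherited from $\cD(\cE^\mu)$, and the well-definedness of $v$ gives uniqueness. Finally, letting $n\to\infty$ in $\tfrac1t\|u_n-T_t^\mu u_n\|_2^2\le\cE^\mu(u_n)$ with $\|u_n-T_t^\mu u_n\|_2\to\|v\|_2=\|u-T_t^\mu u\|_2$ and $\cE^\mu(u_n)\to\cE^\mu(u)$ yields the stated inequality on all of $\cD_e(\cE^\mu)$.

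The hard part will be the bookkeeping around well-definedness of the extension: one must check at once that $v=\lim(u_n-T_t^\mu u_n)$ is independent of the approximating sequence and that $u-v$ is genuinely in $\cD_e(\cE^\mu)$, which forces the simultaneous extraction of an $m$-a.e.\ convergent subsequence of $\{T_t^\mu u_{n_k}\}$ and leans on the Fatou property of $(\cE^\mu,\cD(\cE^\mu))$ recorded earlier (so that $\cE^\mu$ is meaningfully defined on $\cD_e(\cE^\mu)$ and behaves lower-semicontinuously along approximating sequences). Everything else—the spectral inequalities and the final limit passage—is routine. I note that this is exactly the scheme used in \cite[Lemma 3.13]{T4}, adapted here to the Schr\"odinger form.
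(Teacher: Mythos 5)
Your proposal is correct and follows essentially the same route as the paper: the spectral-calculus estimate on $\cD(\cE^\mu)$, the Cauchy argument for $u_n-T_t^\mu u_n$, independence of the approximating sequence via the polarization identity, extraction of an $m$-a.e.\ convergent subsequence of $T_t^\mu u_{n_k}$ to define $T_t^\mu u=u-v$ in $\cD_e(\cE^\mu)$, and the final limit passage. No substantive differences to report.
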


The argument similar to that in the proof of \cite[Lemma 1.5.4]{FOT}  leads us to the next lemma.	
\begin{lem}\la{ex-ine1}
For	$u\in\cD_e(\cE^\mu)$ and $w\in\cD(\cE^\mu)$
	$$
	\lim\limits_{t\downarrow 0}\frac{1}{t}(u-T_t^\mu u,w)=\cE^\mu(u,w).
	$$
\end{lem}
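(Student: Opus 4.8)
The plan is to mimic the classical argument for \cite[Lemma 1.5.4]{FOT}, using the extension of the semigroup $T^\mu_t$ to $\cD_e(\cE^\mu)$ established in Lemma \ref{ex-ine} together with the spectral representation of $\cH^\mu$. First I would treat the case $u\in\cD(\cE^\mu)$: writing $-\cH^\mu=\int_0^\infty\lambda\,dE_\lambda$ and $w\in\cD(\cE^\mu)$, one has
\begin{equation*}
\frac{1}{t}(u-T^\mu_tu,w)=\int_0^\infty\frac{1-e^{-\lambda t}}{t}\,d(E_\lambda u,w),
\end{equation*}
and since $0\le(1-e^{-\lambda t})/t\le\lambda$ with $(1-e^{-\lambda t})/t\uparrow\lambda$ as $t\downarrow0$, one would like to pass to the limit. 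The cleanest route is Cauchy–Schwarz in the spectral measure: $|(1-e^{-\lambda t})/t|\,d|(E_\lambda u,w)|\le \big(\int_0^\infty\lambda\,d(E_\lambda u,u)\big)^{1/2}\big(\int_0^\infty\lambda\,d(E_\lambda w,w)\big)^{1/2}=\sqrt{\cE^\mu(u)}\sqrt{\cE^\mu(w)}<\infty$, so dominated convergence applies and the limit equals $\int_0^\infty\lambda\,d(E_\lambda u,w)=\cE^\mu(u,w)$.

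Next I would remove the restriction $u\in\cD(\cE^\mu)$. Let $u\in\cD_e(\cE^\mu)$ with approximating sequence $\{u_n\}\subset\cD(\cE^\mu)$, so that $\cE^\mu(u-u_n)\to0$ by Lemma \ref{fo1}. For each $n$ the case already proved gives $\lim_{t\downarrow0}\tfrac1t(u_n-T^\mu_tu_n,w)=\cE^\mu(u_n,w)$. To interchange the limits in $n$ and $t$ I would use a uniform estimate: for every $t>0$,
\begin{equation*}
\Big|\frac1t(u-T^\mu_tu,w)-\frac1t(u_n-T^\mu_tu_n,w)\Big|
=\Big|\frac1t\big((u-u_n)-T^\mu_t(u-u_n),w\big)\Big|
\le\frac1t\|(u-u_n)-T^\mu_t(u-u_n)\|_2\,\|w\|_2.
\end{equation*}
By Lemma \ref{ex-ine} applied to $u-u_n\in\cD_e(\cE^\mu)$, the right-hand side is bounded by $t^{-1/2}\sqrt{\cE^\mu(u-u_n)}\,\|w\|_2$; this is not uniform in $t$, so instead I would combine $\tfrac1t\|(u-u_n)-T^\mu_t(u-u_n)\|_2^2\le\cE^\mu(u-u_n)$ with the elementary spectral bound $\tfrac1t(v-T^\mu_tv,v)\le\cE^\mu(v)$ to get $\big|\tfrac1t(v-T^\mu_tv,w)\big|\le\sqrt{\cE^\mu(v)}\,\sqrt{\cE^\mu(w)}$ for all $t>0$ and $v\in\cD_e(\cE^\mu)$ — this is exactly the two displayed inequalities preceding Lemma \ref{ex-ine}, valid on all of $\cD_e(\cE^\mu)$ by the same density argument. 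Taking $v=u-u_n$ yields the bound $\sqrt{\cE^\mu(u-u_n)}\sqrt{\cE^\mu(w)}$, uniform in $t$ and $\to0$ as $n\to\infty$. A standard $\varepsilon/3$ argument then gives $\lim_{t\downarrow0}\tfrac1t(u-T^\mu_tu,w)=\lim_{n\to\infty}\cE^\mu(u_n,w)=\cE^\mu(u,w)$, the last equality by the definition of $\cE^\mu$ on $\cD_e(\cE^\mu)$ and continuity of $\cE^\mu(\cdot,w)$.

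The main obstacle is the one just flagged: the naive bound coming directly from Lemma \ref{ex-ine} carries a factor $t^{-1/2}$ and is useless for the $\varepsilon/3$ interchange, so one must instead establish the $t$-uniform Cauchy–Schwarz-type estimate $|\tfrac1t(v-T^\mu_tv,w)|\le\sqrt{\cE^\mu(v)}\sqrt{\cE^\mu(w)}$ on the extended space. On $\cD(\cE^\mu)$ this is immediate from the spectral calculus (as in the computation displayed before Lemma \ref{ex-ine}, together with Cauchy–Schwarz in $d(E_\lambda v,w)$); extending it to $\cD_e(\cE^\mu)$ is routine given that both sides are continuous along $\cE^\mu$-approximating sequences, using Lemma \ref{ex-ine} to control the convergence of $v_n-T^\mu_tv_n$. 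Everything else is a transcription of the proof of \cite[Lemma 1.5.4]{FOT} with $\cE$ replaced by $\cE^\mu$ and the Dirichlet-space extension results replaced by Lemmas \ref{fo1} and \ref{ex-ine}.
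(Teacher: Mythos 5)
Your proposal is correct and follows essentially the same route as the paper: the decisive step in both is the $t$-uniform Cauchy--Schwarz bound $\bigl|\tfrac1t(v-T_t^\mu v,w)\bigr|\le\cE^\mu(v)^{1/2}\cE^\mu(w)^{1/2}$, first obtained on $\cD(\cE^\mu)$ from the spectral calculus and then extended to $\cD_e(\cE^\mu)$ via the approximation argument preceding Lemma \ref{ex-ine}, after which the limit interchange (your $\varepsilon/3$ argument, the paper's $\liminf$/$\limsup$ squeeze in (\ref{1ub})) is the same. Your explicit dominated-convergence treatment of the case $u\in\cD(\cE^\mu)$ merely spells out what the paper takes as standard.
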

\begin{proof}
For $u,v\in\cD(\cE^\mu)$
\begin{align}\la{abo1}
	\left|\frac{1}{t}(u-T_t^\mu u,w)\right|&\leq \left( \frac{1}{t}(u-T_t^\mu u,u)\right) ^{1/2}
	\left( \frac{1}{t}(w-T_t^\mu w,w)\right) ^{1/2}\\
	&\leq \cE^\mu(u)^{1/2}\cE^\mu(w)^{1/2}.\nonumber
\end{align}

Let $\{u_n\}_{n=1}^\infty\subset \cD(\cE^\mu) $ be an approximating sequence of $u$. 
Noting that the inequality (\ref{abo1}) can be extended to $u\in\cD_e(\cE^\mu)$ by the argument 
before Lemma \ref{ex-ine}, we have
\begin{align*}
	\left|\frac{1}{t}(u-T_t^\mu u,w)-\frac{1}{t}(u_n-T_t^\mu u_n,w)\right|
	&=\left|\frac{1}{t}(u-u_n-T_t^\mu(u-u_n),w)\right|\\
	&\leq \cE^\mu(u-u_n)^{1/2}\cE^\mu(w)^{1/2},
\end{align*}
equivalently
\begin{align*}
&\frac{1}{t}(u_n-T_t^\mu u_n,w)-\cE^\mu(u-u_n)^{1/2}\cE^\mu(w)^{1/2}
\leq \frac{1}{t}(u-T_t^\mu u,w)\\
&\qquad \qquad \qquad \qquad \leq\frac{1}{t}(u_n-T_t^\mu u_n ,w)+\cE^\mu(u-u_n)^{1/2}\cE^\mu(w)^{1/2}.
\end{align*}
Hence we have  
	\begin{align}\la{1ub}
	\cE^\mu(u_n,w)-\cE^\mu(u-u_n)^{1/2}\cE^\mu(w)^{1/2}&\leq 
	\varliminf_{t\downarrow 0}\frac{1}{t}(u-T_t^\mu u,w)\nonumber \\
	&\leq \varlimsup_{t\downarrow 0}\frac{1}{t}(u-T_t^\mu u,w)\\
	&\leq \cE^\mu(u_n,w)+\cE^\mu(u-u_n)^{1/2}\cE^\mu(w)^{1/2}.\nonumber
	\end{align}
The both sides of (\ref{1ub}) tend to $\cE^\mu(u,w)$ 
as $n\to\infty $ and the proof is completed.
\end{proof}

\begin{lem}\la{well1}
Suppose $\cH^+(\mu)$ is not empty. If $(\cE^{\mu,h}, \cD(\cE^{\mu,h}))$ is
transient for some $h\in\cH^+(\mu)$,
then so is for any $h\in\cH^+(\mu)$. 
\end{lem}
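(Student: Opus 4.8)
The plan is to reduce the statement to a single implication and then transport the transience inequality of one $h$-transform to another. By symmetry it suffices to show: if $h_0,h\in\cH^+(\mu)$ and $(\cE^{\mu,h_0},\cD(\cE^{\mu,h_0}))$ is transient, then $(\cE^{\mu,h},\cD(\cE^{\mu,h}))$ is transient. First I would record the correspondence $v\mapsto hv$: since $\cD(\cE^{\mu})\subset L^2(E;m)$, one checks directly from (\ref{h-dir}) that $h\cdot\cD(\cE^{\mu,h})=\cD(\cE^{\mu})$, and $\cE^{\mu,h}(v)=\cE^{\mu}(hv)$ for $v\in\cD(\cE^{\mu,h})$. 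Unwinding Definition \ref{def-transient}(1) for the form $(\cE^{\mu,h_0},\cD(\cE^{\mu,h_0}))$ on $L^2(E;h_0^2m)$: transience gives a bounded $g_0\in L^1(E;h_0^2m)$, strictly positive $m$-a.e., with $\int_E|v|\,g_0\,d(h_0^2m)\le\sqrt{\cE^{\mu,h_0}(v)}$ for all $v\in\cD(\cE^{\mu,h_0})$. Substituting $u=h_0v$ and setting $f_0:=g_0h_0$, this is precisely
\begin{equation*}
\int_E|u|\,f_0\,dm\le\sqrt{\cE^{\mu}(u)},\qquad u\in\cD(\cE^{\mu}),
\end{equation*}
where $f_0>0$ $m$-a.e. and $f_0/h_0$ is bounded.

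The key observation is that this inequality is monotone in $f_0$: any strictly positive $f\le f_0$ $m$-a.e.\ still satisfies it. Hence it suffices to produce $f$ with $0<f\le f_0$ $m$-a.e., $f/h$ bounded and $\int_E fh\,dm<\infty$; then $g:=f/h$ is a bounded, $m$-a.e.\ strictly positive element of $L^1(E;h^2m)$, and translating back along $v\mapsto hv$ gives $\int_E|v|\,g\,d(h^2m)=\int_E|hv|\,f\,dm\le\int_E|hv|\,f_0\,dm\le\sqrt{\cE^{\mu}(hv)}=\sqrt{\cE^{\mu,h}(v)}$ for $v\in\cD(\cE^{\mu,h})$, i.e.\ $(\cE^{\mu,h},\cD(\cE^{\mu,h}))$ is transient. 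To build $f$: since $E$ is $\sigma$-compact and $m$ is Radon, $m$ is $\sigma$-finite; combining a countable cover by finite-measure sets with the level sets $\{j-1\le h<j\}$ (and using $0<h<\infty$ $m$-a.e.) I can write $E=\bigsqcup_k B_k$ with $m(B_k)<\infty$ and $h\le c_k<\infty$ on $B_k$. Put $\beta_k:=\int_{B_k}(f_0\wedge h)\,h\,dm\le c_k^2\,m(B_k)<\infty$, choose $\varepsilon_k\in(0,1]$ with $\sum_k\varepsilon_k\beta_k<\infty$, and set $\rho:=\sum_k\varepsilon_k 1_{B_k}$ (so $0<\rho\le1$) and $f:=\rho\,(f_0\wedge h)$. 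Then $0<f\le f_0$, $f/h\le\rho\le1$, and $\int_E fh\,dm=\sum_k\varepsilon_k\beta_k<\infty$, as required.

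There is no deep obstacle here; conceptually all that happens is that $(\cE^{\mu,h},\cD(\cE^{\mu,h}))$ is the $(h/h_0)$-transform of $(\cE^{\mu,h_0},\cD(\cE^{\mu,h_0}))$ by a function that is excessive for the latter's semigroup (indeed $T_t^{\mu,h_0}(h/h_0)=h_0^{-1}T^\mu_t h\le h_0^{-1}h=h/h_0$), and transience is stable under such $h$-transforms. The only point requiring care is the one exploited in the middle paragraph: the witnessing density for $(\cE^{\mu,h},\cD(\cE^{\mu,h}))$ must be simultaneously bounded and integrable against $h^2m$, so one cannot just reuse $g_0$ and must shrink $f_0$ to $\rho\,(f_0\wedge h)$ via the partition above; the rest is bookkeeping with the identity $h\cdot\cD(\cE^{\mu,h})=\cD(\cE^{\mu})$.
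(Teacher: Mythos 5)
Your proof is correct, and it takes a genuinely different route from the one in the paper. The paper argues by contradiction: using irreducibility it first invokes the dichotomy that each $(\cE^{\mu,h},\cD(\cE^{\mu,h}))$ is either transient or recurrent, then supposes $(\cE^{\mu,h_1},\cdot)$ transient and $(\cE^{\mu,h_2},\cdot)$ recurrent, takes an approximating sequence $\{\psi_n\}$ of $1$ with $\cE^{\mu,h_2}(\psi_n)\to 0$, feeds $(h_2/h_1)\psi_n$ into the transience inequality of the $h_1$-transform (extended to $\cD_e(\cE^{\mu,h_1})$), and gets a contradiction via Fatou, since $\liminf_n\int_E(h_2/h_1)|\psi_n|g\,h_1^2\,dm\ge\int_E h_1h_2g\,dm>0$. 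You instead prove the implication directly: you pull the reference function of the transient transform back to the base form via the bijection $v\mapsto h_0v$ between $\cD(\cE^{\mu,h_0})$ and $\cD(\cE^{\mu})$, push it forward along $v\mapsto hv$, and then repair boundedness and $h^2m$-integrability by shrinking $f_0$ to $\rho\,(f_0\wedge h)$ on a countable partition of $E$ into sets of finite measure on which $h$ is bounded (legitimate since $E$ is $\sigma$-compact and $m$ Radon, hence $\sigma$-finite). The bookkeeping checks out: $g=f/h$ is bounded by $1$, strictly positive $m$-a.e., lies in $L^1(E;h^2m)$, and satisfies $\int_E|v|g\,h^2dm\le\sqrt{\cE^{\mu,h}(v)}$ on all of $\cD(\cE^{\mu,h})$, which is exactly Definition 2.2(1) for the transformed form. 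What each approach buys: yours needs neither the irreducibility of the transformed forms nor the extension of the transience inequality to the extended space, and it constructs the new reference function explicitly; the paper's contradiction argument is shorter once the transient/recurrent dichotomy is granted and, by its very structure, also rules out one transform being transient while another is recurrent, which is the form in which the well-definedness of criticality versus subcriticality is used later.
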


\begin{proof}
First note that by the irreducibility of $(\cE,\cD(\cE))$,   
 $(\cE^{\mu,h}, \cD(\cE^{\mu,h}))$ is either transient or recurrent. 
Suppose that for 
$h_1, h_2\in \cH^+(\mu)$, $(\cE^{\mu,h_1}, \cD(\cE^{\mu,h_1}))$ is transient and 
$(\cE^{\mu,h_2}, \cD(\cE^{\mu,h_2}))$ is recurrent.
 Then it follows that there exists $g > 0$  such that 
 \begin{equation}\label{r-2-3-1}
 \int_E |u|g h_1^2 dm \le \sqrt{\cE^{\mu,h_1}(u)}, \ \ 
 u \in \cD_e(\cE^{\mu,h_1}).
\end{equation}
Let $\{\psi_n\}\subset\cD(\cE^{\mu,h_2})$ be an approximating sequence 1 and 
$\cE^{\mu,h_2}(\psi_n)\to 0$ as $n\to\infty$.
Noting that $(h_2/h_1) \psi_n=(h_2\psi_n)/h_1\in \cD_e(\cE^{\mu,h_1})$, we have 
\begin{align}\label{r-2-3-1}
 \int_E ({h_2}/{h_1})|\psi_n|g h_1^2 dm &\le \sqrt{\cE^{\mu,h_1}((h_2/h_1) \psi_n))}
= \sqrt{\cE^{\mu}(h_2\psi_n)}\\
&=\sqrt{\cE^{\mu,h_2}(\psi_n)}\to 0,\nonumber
\end{align}
which is contradictory because 
$$
\liminf_{n\to\infty} \int_E ({h_2}/{h_1})|\psi_n|g h_1^2 dm\ge\int_Eh_1h_2g dm>0.
$$
\end{proof}

\begin{lem}\la{ma}
If $(\cE^{\mu}, \cD(\cE^{\mu}))$ is subcritical, then the function $G^\mu g$ in Lemma \ref{77} belongs to
 $\cH^+(\mu)$ and $(\cE^{\mu,G^\mu g}, \cD(\cE^{\mu,G^\mu g}))$
 is transient.
\end{lem}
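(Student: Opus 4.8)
The plan is to prove the two assertions in order: first that $h:=G^\mu g$ lies in $\cH^+(\mu)$, and then that the $h$-transformed Dirichlet form is transient, the latter being a short computation once the former is available. For the membership $G^\mu g\in\cH^+(\mu)$, recall that $(\cE^\mu,\cD(\cE^\mu))$ is positive preserving, so its semigroup $T^\mu_t$ is positivity preserving; hence $T^\mu_t g\ge 0$ and $G^\mu g=\int_0^\infty T^\mu_t g\,dt$ is a well-defined $[0,\infty]$-valued function. By Lemma \ref{77} it belongs to $\cD_e(\cE^\mu)$, and since the proof of that lemma produces $\int_E g\,G^\mu g\,dm\le 1$ with $g>0$ $m$-a.e., we get $G^\mu g<\infty$ $m$-a.e. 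Excessivity is the usual resolvent identity: extending $T^\mu_s$ to nonnegative functions and using the semigroup property together with monotone convergence, $T^\mu_s(G^\mu g)=\int_s^\infty T^\mu_t g\,dt\le G^\mu g$ for every $s>0$.

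The one point requiring real work is the strict positivity $G^\mu g>0$ $m$-a.e., which I would argue by contradiction. On the set $N:=\{G^\mu g=0\}$ one has $0\le T^\mu_s(G^\mu g)\le G^\mu g=0$, i.e.\ $\int_s^\infty T^\mu_t g\,dt=0$ $m$-a.e.\ on $N$ for every $s>0$; by Tonelli this forces $T^\mu_t g=0$ $m$-a.e.\ on $N$ for Lebesgue-a.e.\ $t>0$, so along some sequence $t_k\downarrow 0$ we have $1_N T^\mu_{t_k}g=0$ in $L^2$, and the strong continuity $T^\mu_{t_k}g\to g$ in $L^2$ then gives $1_N g=0$; since $g>0$ $m$-a.e., $m(N)=0$. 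Thus $G^\mu g\in\cH^+(\mu)$.

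For the transience of $(\cE^{\mu,h},\cD(\cE^{\mu,h}))$ with $h=G^\mu g\in\cH^+(\mu)$, note first that this is a Dirichlet form on $L^2(E;h^2m)$ by \cite{Li}. For $v\in\cD(\cE^{\mu,h})$, by (\ref{h-dir}) we have $u:=hv\in\cD(\cE^\mu)\subset\cD_e(\cE^\mu)$ and $\cE^{\mu,h}(v)=\cE^\mu(u)$, so the subcriticality inequality (\ref{tra}) reads
\bequ
\int_E|v|\,\frac{g}{h}\,h^2\,dm=\int_E|hv|\,g\,dm\le\sqrt{\cE^\mu(u)}=\sqrt{\cE^{\mu,h}(v)}.
\eequ
Putting $g'=(g/h)\wedge 1$, which is bounded, strictly positive $m$-a.e., and satisfies $\int_E g'\,h^2\,dm\le\int_E g\,h\,dm\le 1$ so that $g'\in L^1(E;h^2m)$, the displayed inequality yields $\int_E|v|\,g'\,h^2\,dm\le\sqrt{\cE^{\mu,h}(v)}$ for all $v\in\cD(\cE^{\mu,h})$, which is exactly the transience criterion of Definition \ref{def-transient}(1). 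Hence $(\cE^{\mu,h},\cD(\cE^{\mu,h}))$ is transient.

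The main obstacle is the strict positivity of $G^\mu g$ in the first part; the remainder is bookkeeping, namely justifying the interchange of $T^\mu_s$ with the time integral on nonnegative functions and the truncation $g\wedge h$ forced by the boundedness requirement in Definition \ref{def-transient}. An alternative route to strict positivity would invoke irreducibility of $T^\mu_t$, itself a consequence of irreducibility of $\{T_t\}$ via the domination $T^\mu_t\ge T_t$ for nonnegative functions, but the Tonelli argument above avoids this detour.
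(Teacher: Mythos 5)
Your proof is correct, and it diverges from the paper's in an interesting way. For the excessivity of $G^\mu g$ you and the paper do the same thing (the paper writes $T^\mu_t\int_0^T T^\mu_s g\,ds=\int_0^{T+t}T^\mu_s g\,ds-\int_0^t T^\mu_s g\,ds$ and lets $T\to\infty$, which is your monotone-convergence identity $T^\mu_s G^\mu g=\int_s^\infty T^\mu_t g\,dt$); but you additionally verify the a.e.\ finiteness and, via strong $L^2$-continuity of $T^\mu_t$ on the set $\{G^\mu g=0\}$, the a.e.\ strict positivity required by the definition of $\cH^+(\mu)$ --- a point the paper's proof leaves implicit. For transience your route is genuinely different: the paper puts $v=G^\mu g$, observes that $1\in\cD_e(\cE^{\mu,v})$ with $\cE^{\mu,v}(1)=\cE^\mu(v)=\int_E g\,G^\mu g\,dm>0$ (so the transformed Dirichlet form has a nontrivial killing part, hence cannot be recurrent), and then relies on the transient/recurrent dichotomy coming from irreducibility (as noted in the proof of Lemma \ref{well1}); you instead verify the transience criterion of Definition \ref{def-transient}(1) directly, transporting the subcriticality inequality (\ref{tra}) through the $h$-transform with the reference function $g'=(g/h)\wedge 1\in L^1(E;h^2m)$ --- essentially the mirror image of the substitution used in the ``if'' direction of Theorem \ref{tai} i). The paper's argument is shorter once the dichotomy is granted; yours is more self-contained (no appeal to irreducibility), exhibits an explicit reference function witnessing transience, and along the way also justifies the positivity $\int_E g\,G^\mu g\,dm>0$ that the paper's inequality takes for granted.
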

\begin{proof}
The function $G^\mu g\in \cD_e(\cE^\mu)$ belongs to $\cH^+(\mu)$. Indeed, let
$$
S^\mu_Tg(x)=\int_0^TT^\mu_sg ds.
$$
Then 
$$
T^\mu_tS^\mu_Tg(x)=\int_0^{T+t}T^\mu_sg ds-\int_0^tT^\mu_sg ds
$$
and by $T\to\infty$
\bequ\la{ex}
T^\mu_tG^\mu g(x)=G^\mu g -\int_0^tT^\mu_sg ds\le G^\mu g .
\eequ
Put $v=G^\mu g$. Then since 
$$
\cE^{\mu,v}(1)=\cE^\mu(v)=\int_E g G^\mu g dm >0,
$$
$(\cE^{\mu,v},\cD(\cE^{\mu,v}))$ is transient, more precisely, has the killing part. 
\end{proof}

 \begin{thm}\la{tai}
{\rm i)}  $(\cE^{\mu}, \cD(\cE^{\mu}))$ is {subcritical} if and only if there exists $h\in\cH^+(\mu)$ such that 
$(\cE^{\mu,h},\cD(\cE^{\mu,h}))$ is transient.\\
{\rm ii)}  $(\cE^{\mu}, \cD(\cE^{\mu}))$ is critical if and only if there exists $h\in\cH^+(\mu)$ such that 
$(\cE^{\mu,h},\cD(\cE^{\mu,h}))$ is recurrent.
 \end{thm}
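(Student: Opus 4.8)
The plan is to read both ``only if'' directions off the lemmas already established, and to obtain the two ``if'' directions by transporting the defining data (the transience weight in one case, the ground state in the other) through the $h$-transform, using the form-theoretic fact that $u\in\cD_e(\cE^\mu)$ if and only if $u/h\in\cD_e(\cE^{\mu,h})$, with $\cE^\mu(u)=\cE^{\mu,h}(u/h)$ (\cite[Lemma 2.8]{T3}).

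For the ``only if'' half of (i) there is nothing new: if $(\cE^\mu,\cD(\cE^\mu))$ is subcritical, Lemma \ref{ma} already exhibits $h:=G^\mu g\in\cH^+(\mu)$ for which $(\cE^{\mu,h},\cD(\cE^{\mu,h}))$ is transient (and then Lemma \ref{well1} upgrades this to every $h\in\cH^+(\mu)$). For the ``only if'' half of (ii), suppose $(\cE^\mu,\cD(\cE^\mu))$ is critical with ground state $\phi$. By Lemma \ref{rc} we have $\phi\in\cH^+(\mu)$, so the $h$-transform by $\phi$ is defined; I would then observe that $1=\phi/\phi$ lies in $\cD_e(\cE^{\mu,\phi})$ and $\cE^{\mu,\phi}(1)=\cE^\mu(\phi)=0$, which is exactly recurrence of $(\cE^{\mu,\phi},\cD(\cE^{\mu,\phi}))$ in the sense of Definition \ref{def-transient}(2).

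For the ``if'' half of (i), assume some $h\in\cH^+(\mu)$ makes $(\cE^{\mu,h},\cD(\cE^{\mu,h}))$ transient, and pick a bounded, $m$-a.e.\ strictly positive $g_0\in L^1(E;h^2m)$ with $\int_E|v|\,g_0h^2\,dm\le\sqrt{\cE^{\mu,h}(v)}$ for all $v\in\cD(\cE^{\mu,h})$. For $u\in\cD(\cE^\mu)$ one has $u/h\in\cD(\cE^{\mu,h})$ and $\cE^{\mu,h}(u/h)=\cE^\mu(u)$, hence $\int_E|u|(g_0h)\,dm=\int_E|u/h|\,g_0h^2\,dm\le\sqrt{\cE^\mu(u)}$; applying this along an approximating sequence of an arbitrary $u\in\cD_e(\cE^\mu)$ and passing to the limit with Fatou's lemma (using $\cE^\mu(u_n)\to\cE^\mu(u)$) extends the bound to all of $\cD_e(\cE^\mu)$, with weight $g_0h$. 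Since $g_0h$ is $m$-a.e.\ positive but need be neither bounded nor $m$-integrable, I would finish by fixing any bounded, strictly positive $\rho\in L^1(E;m)$ (available because $m$ is $\sigma$-finite) and setting $g:=(g_0h)\wedge\rho$: this $g$ still satisfies $\int_E|u|g\,dm\le\sqrt{\cE^\mu(u)}$ and now meets every requirement of Definition \ref{def-criticality}(1), so $(\cE^\mu,\cD(\cE^\mu))$ is subcritical. For the ``if'' half of (ii), if $(\cE^{\mu,h},\cD(\cE^{\mu,h}))$ is recurrent for some $h\in\cH^+(\mu)$, i.e.\ $1\in\cD_e(\cE^{\mu,h})$ and $\cE^{\mu,h}(1)=0$, then the same correspondence gives $h=h\cdot1\in\cD_e(\cE^\mu)$ and $\cE^\mu(h)=\cE^{\mu,h}(1)=0$; since $h>0$ $m$-a.e.\ by the definition of $\cH^+(\mu)$, $h$ is a ground state and $(\cE^\mu,\cD(\cE^\mu))$ is critical.

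The routine parts are the passage to the extended space by Fatou's lemma and the cosmetic truncation $g=(g_0h)\wedge\rho$. The point that deserves care is that the $h$-transform membership equivalence and the energy identity $\cE^\mu(u)=\cE^{\mu,h}(u/h)$ be used only where they are legitimately available — on the extended spaces, and only for $h\in\cH^+(\mu)$ — since for a general smooth $\mu$ there is no probabilistic representation of $T^\mu_t$ to fall back on; this is exactly what \cite[Lemma 2.8]{T3} supplies. One may also record that the two cases are mutually exclusive: were $(\cE^\mu,\cD(\cE^\mu))$ subcritical, Lemma \ref{ex-h} would make $(\cD_e(\cE^\mu),\cE^\mu)$ a Hilbert space, whence $\cE^\mu(\phi)=0$ forces $\phi=0$, contradicting $\phi>0$.
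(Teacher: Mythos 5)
Your proposal is correct and follows essentially the same route as the paper: the ``only if'' directions are read off Lemma \ref{ma} and Lemma \ref{rc}, and the ``if'' directions transport the transience weight (with the same truncation trick, the paper uses $g=((g'h)\wedge 1)\eta$) and the recurrence of $1$ through the $h$-transform correspondence $u\in\cD_e(\cE^\mu)\Leftrightarrow u/h\in\cD_e(\cE^{\mu,h})$ from \cite[Lemma 2.8]{T3}. The only cosmetic difference is that you establish the subcriticality inequality first on $\cD(\cE^\mu)$ and pass to $\cD_e(\cE^\mu)$ by Fatou, whereas the paper applies the transience inequality directly on $\cD_e(\cE^{\mu,h})$; both steps are equivalent and legitimate.
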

 \begin{proof}
 i)  The proof of ``only if'' part is given in Lemma \ref{ma}.
 Let $h$ be a function in $\cH^+(\mu)$ such that $(\cE^{\mu,h},\cD(\cE^{\mu,h}))$ is transient. Then 
 there exists a bounded function $g'\in L^1(E;h^2m)$ strictly positive $m$-a.e. such that for $u\in\cD_e(\cE^{\mu,h})$
	$$
	\int_E|u| g' h^2dm\le \sqrt{\cE^{\mu,h}(u)}.
	$$
Putting $v=hu$, we have for $v\in\cD_e(\cE^{\mu})$
$$
\int_E|v| g' hdm\le \sqrt{\cE^{\mu}(v)}.
$$	
Let $\eta\in L^1(E;m)$ with $0<\eta \le 1,	\ m$-a.e. and define $g=((g' h)\wedge 1)\eta$.
Then $g$ is a bounded  function in $L^1(E;m)$ positive $m$-a.e. such that
$$
\int_E|v| g dm\le \sqrt{\cE^{\mu}(v)},\ \ \forall v\in\cD_e(\cE^{\mu}).
$$	

ii) Let $\phi$ be a function in  Definition \ref{def-criticality} (2). 
Then we see from Lemma \ref{rc} that $\phi\in\cD_e(\cE^\mu)\cap \cH^+(\mu)$. Hence
$1\in \cD_e(\cE^{\mu,\phi})$ and 
$$
\cE^{\mu,\phi}(1)=\cE^\mu(\phi)=0.
$$

Suppose that for  an $h\in \cH^+(\mu)$ $(\cE^{\mu,h},\cD(\cE^{\mu,h}))$ is recurrent. 
Then $1\in \cD_e(\cE^{\mu,h})$ and 
$\cE^{\mu,h}(1)=0$. Hence $h$ is in $\cD_e(\cE^{\mu})$ and satisfies $\cE^{\mu}(h)=0$, that is, a ground state
by Remark \ref{g-u}.
 \end{proof}

\section{Analytic Criterion for Subcriticality}
We define a function space
\bequ
\cL=\left\{f\, \Big|\,  \Big|\int_E |f|\varphi 
dm\Big|\le C\cE(\varphi)^{1/2}\ \text{for any}\ \varphi\in\cD_e(\cE) \right\}.
\eequ
In this section, we make an assumption:
\bequ\la{BD} 
1_K\in\cL\ \  \text{for any compact set}\ K.
\eequ
Note that $\cD(\cE)\cap C_0(E)\subset\cL$ by the assumption
If the Hunt process $X$ generated by $(\cE,\cD(\cE))$ satisfies the strong Feller property, 
then the assumption above is fulfilled
by the Green-boundedness of $1_Km$, $\|G1_K\|_\infty<\infty$ (\cite[Proposition 2.2]{Chen-gauge}) and
the inequality 
\begin{align}\la{domi}
\int_E |\psi|\varphi dm\le \|G|\psi|\|_\infty^{1/2}\cdot\left(\int_E|\psi|dm\right)^{1/2}\cE(\varphi)^{1/2}.
\end{align}
We define 
\begin{equation}\la{j-ground}
\lambda (\mu)= \inf \left\{ \cE(u)\,\Big|\, u \in\cD(\cE)\cap C_0(E),\ 
		        \int_E u^2 d\mu = 1\right\}.
\end{equation}

\begin{lem}\la{e-1}
If $\lambda (\mu)>1$, then there exists a positive constant $c$ such that 
$$
c\cdot\cE(u)\le \cE^\mu(u)\le \cE(u), \ u\in \cD(\cE)\cap C_0(E).
$$
\end{lem}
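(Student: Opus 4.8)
The plan is to extract the desired two-sided comparison directly from the definition of $\lambda(\mu)$. The upper bound $\cE^\mu(u)\le\cE(u)$ is immediate from the definition (\ref{sch-form}), since $\int_E u^2\,d\mu\ge 0$; no work is needed there. For the lower bound, I would argue by homogeneity: for any nonzero $u\in\cD(\cE)\cap C_0(E)$ with $\int_E u^2\,d\mu>0$, rescale to $v=u/\sqrt{\int_E u^2\,d\mu}$ so that $\int_E v^2\,d\mu=1$; then the definition (\ref{j-ground}) of $\lambda(\mu)$ gives $\cE(v)\ge\lambda(\mu)$, i.e.\ $\int_E u^2\,d\mu\le \cE(u)/\lambda(\mu)$. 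Substituting into (\ref{sch-form}) yields
$$
\cE^\mu(u)=\cE(u)-\int_E u^2\,d\mu\ge \cE(u)-\frac{1}{\lambda(\mu)}\cE(u)=\Bigl(1-\frac{1}{\lambda(\mu)}\Bigr)\cE(u),
$$
so one takes $c=1-1/\lambda(\mu)$, which is strictly positive precisely because $\lambda(\mu)>1$.

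The one genuinely fiddly point—the step I would flag as the main (minor) obstacle—is the case $\int_E u^2\,d\mu=0$. There one cannot rescale, but the inequality $\int_E u^2\,d\mu\le \cE(u)/\lambda(\mu)$ holds trivially (the left side is $0$ and $\cE(u)\ge 0$), so the conclusion $\cE^\mu(u)\ge c\,\cE(u)$ still follows. A cleaner way to handle all cases uniformly is to note that the inequality $\int_E u^2\,d\mu\le \cE(u)/\lambda(\mu)$ for all $u\in\cD(\cE)\cap C_0(E)$ is just the variational characterization of $\lambda(\mu)$ rewritten without the normalization constraint, and this reformulation is standard (it is exactly the content one uses to pass between (\ref{j-ground1}) and (\ref{h-in})-type statements).

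One should also check that $\lambda(\mu)<\infty$ is not needed: if $\lambda(\mu)=+\infty$ the hypothesis $\lambda(\mu)>1$ still holds, and then $\int_E u^2\,d\mu=0$ for every $u\in\cD(\cE)\cap C_0(E)$, whence $\cE^\mu=\cE$ and the claim holds with $c=1$; in the finite case the argument above applies verbatim. Thus in every situation $c:=1-1/\lambda(\mu)\in(0,1]$ works, and the proof is complete. I expect the write-up to be three or four lines once the rescaling observation is stated.
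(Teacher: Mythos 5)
Your proposal is correct and follows essentially the same route as the paper: the paper's proof also reads the definition of $\lambda(\mu)$ as the inequality $\int_E u^2\,d\mu\le \cE(u)/\lambda(\mu)$ and substitutes it into $\cE^\mu(u)=\cE(u)-\int_E u^2\,d\mu$ to get $c=1-1/\lambda(\mu)$. Your extra care with the cases $\int_E u^2\,d\mu=0$ and $\lambda(\mu)=+\infty$ merely makes explicit what the paper leaves implicit.
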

\begin{proof}
By the definition of $\lambda(\mu)$
$$
\int_Eu^2d\mu\le \frac{1}{\lambda(\mu)}\cdot\cE(u),\ u\in\cD(\cE)\cap C_0(E)
$$
and thus
$$
\cE^\mu(u)=\cE(u)-\int_Eu^2d\mu\ge \cE(u)-\frac{1}{\lambda(\mu)}\cE(u)=\left(1-\frac{1}{\lambda(\mu)}\right)\cE(u).
$$
\end{proof}
We see from Lemma \ref{e-1} that if $\lambda (\mu)>1$, then $(\cE^\mu,\cD(\cE)\cap C_0(E))$ is 
positive semi-definite and closable. Furthermore, its extended Schr\"{o}dinger space $\cD_e(\cE^\mu)$
 equals the extended Dirichlet space $\cD_e(\cE)$ and $(\cD_e(\cE^\mu), \cE^\mu)$ is a Hilbert space.

\begin{lem}\la{e-2}
Suppose $\lambda (\mu)>1$. For $f\in \cL$, there exists an $h\in\cD_e(\cE^\mu)$ such that for any $\varphi\in\cD_e(\cE^\mu)$
$$
\cE^\mu(h,\varphi)=\int_Ef\varphi dm.
$$
\end{lem}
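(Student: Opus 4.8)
The plan is to read this off as an instance of the Riesz representation theorem in the Hilbert space $(\cD_e(\cE^\mu),\cE^\mu)$, whose existence was already recorded in the paragraph following Lemma \ref{e-1}. So the proof has essentially three steps: (i) collect what $\lambda(\mu)>1$ buys us about the geometry of $\cD_e(\cE^\mu)$; (ii) show that $\varphi\mapsto\int_E f\varphi\,dm$ is a well-defined bounded linear functional on $(\cD_e(\cE^\mu),\cE^\mu)$; (iii) invoke Riesz.

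For step (i): since $\lambda(\mu)>1$, Lemma \ref{e-1} gives a constant $c>0$ with $c\,\cE(u)\le\cE^\mu(u)\le\cE(u)$ for $u\in\cD(\cE)\cap C_0(E)$. By approximation this comparability extends to the closure, so $\cD(\cE^\mu)=\cD(\cE)$ with equivalent norms, and then, passing to approximating sequences, to the extended spaces: $\cD_e(\cE^\mu)=\cD_e(\cE)$ and $c\,\cE(\varphi)\le\cE^\mu(\varphi)\le\cE(\varphi)$ for every $\varphi\in\cD_e(\cE^\mu)$. Because $(\cE,\cD(\cE))$ is transient, $\cE$ is positive definite on $\cD_e(\cE)$, hence so is $\cE^\mu$ on $\cD_e(\cE^\mu)$, and $(\cD_e(\cE^\mu),\cE^\mu)$ is a Hilbert space, as asserted after Lemma \ref{e-1}.

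For step (ii): let $f\in\cL$, say $\big|\int_E|f|\psi\,dm\big|\le C\,\cE(\psi)^{1/2}$ for all $\psi\in\cD_e(\cE)$. The extended Dirichlet space is stable under normal contractions, so for $\varphi\in\cD_e(\cE)$ we have $|\varphi|\in\cD_e(\cE)$ with $\cE(|\varphi|)\le\cE(\varphi)$; applying the $\cL$-bound to $\psi=|\varphi|$ yields $\int_E|f\varphi|\,dm\le C\,\cE(|\varphi|)^{1/2}\le C\,\cE(\varphi)^{1/2}<\infty$. Thus $L(\varphi):=\int_E f\varphi\,dm$ is absolutely convergent, linear in $\varphi$, and $|L(\varphi)|\le C\,\cE(\varphi)^{1/2}\le C c^{-1/2}\,\cE^\mu(\varphi)^{1/2}$ for all $\varphi\in\cD_e(\cE^\mu)$. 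Hence $L$ is a bounded linear functional on the Hilbert space $(\cD_e(\cE^\mu),\cE^\mu)$, and by the Riesz representation theorem there is a (unique) $h\in\cD_e(\cE^\mu)$ with $\cE^\mu(h,\varphi)=L(\varphi)=\int_E f\varphi\,dm$ for every $\varphi\in\cD_e(\cE^\mu)$, which is the claim.

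There is no genuinely hard step here; the only points requiring care are bookkeeping ones, namely propagating the $\cL$-inequality and the normal-contraction estimate $\cE(|\varphi|)\le\cE(\varphi)$ from $\cD(\cE)\cap C_0(E)$ up to all of $\cD_e(\cE)=\cD_e(\cE^\mu)$ (done via approximating sequences together with Fatou's lemma), and confirming that $\cE^\mu$ is a bona fide inner product and not just a semi-norm on $\cD_e(\cE^\mu)$, which is immediate from $c\,\cE(\varphi)\le\cE^\mu(\varphi)$ and transience of $(\cE,\cD(\cE))$.
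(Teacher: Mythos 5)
Your proof is correct and follows essentially the same route as the paper, which simply notes that $(\cD_e(\cE^\mu),\cE^\mu)$ is a Hilbert space (by Lemma \ref{e-1} and transience) and invokes the Riesz representation theorem; your steps (i)--(iii) just make explicit the boundedness of $\varphi\mapsto\int_E f\varphi\,dm$ that the paper leaves implicit in the definition of $\cL$. (Minor remark: since $\cL$ is defined by the bound over all $\varphi\in\cD_e(\cE)$, no propagation from $\cD(\cE)\cap C_0(E)$ is needed for that inequality, only the contraction estimate $\cE(|\varphi|)\le\cE(\varphi)$, which you use correctly.)
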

\begin{proof}
Since $(\cD_e(\cE^\mu),\cE^\mu)$ is a Hilbert space, the Riesz theorem leads us to this lemma.  
\end{proof}

\begin{lem}\la{e-3}
Suppose $\lambda (\mu)>1$. If $f\in \cL$ is non-negative, then $h\in\cD_e(\cE^\mu)$ in Lemma
\ref{e-2} is also non-negative. 
\end{lem}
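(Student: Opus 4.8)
The plan is to write $h=h^+-h^-$ and test the identity of Lemma~\ref{e-2} against $\varphi=h^-$. The hypothesis $f\ge0$ will force $\cE^\mu(h^-)=0$, and, since $(\cD_e(\cE^\mu),\cE^\mu)$ is a Hilbert space, this gives $h^-=0$ $m$-a.e., i.e. $h\ge0$ $m$-a.e.

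First I would assemble the needed structure. By the remark following Lemma~\ref{e-1}, $\cD_e(\cE^\mu)=\cD_e(\cE)$; since the extended Dirichlet space of a regular transient Dirichlet form is stable under normal contractions (\cite{FOT}), $h^+,h^-,|h|\in\cD_e(\cE)=\cD_e(\cE^\mu)$. Next, the identity $\cE^\mu(v)=\cE(v)-\int_E v^2\,d\mu$, valid on $\cD(\cE)$, extends to every $v\in\cD_e(\cE)$: for an approximating sequence $\{v_n\}\subset\cD(\cE)$ one has $\int_E(v_n-v_m)^2\,d\mu\le\lambda(\mu)^{-1}\cE(v_n-v_m)\to0$, so $v_n\to v$ in $L^2(E;\mu)$ and each of the three terms converges. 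Since $|h|^2=h^2$ and $\cE(|h|)\le\cE(h)$ (the modulus is a normal contraction), this yields the positive-preserving inequality on the extended space,
\[
\cE^\mu(|h|)=\cE(|h|)-\int_E h^2\,d\mu\le\cE(h)-\int_E h^2\,d\mu=\cE^\mu(h),
\]
and expanding $|h|=h^++h^-$, $h=h^+-h^-$ gives the second Beurling--Deny bound $\cE^\mu(h^+,h^-)=\tfrac{1}{4}\bigl(\cE^\mu(|h|)-\cE^\mu(h)\bigr)\le0$.

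Now I would compute using $\varphi=h^-\in\cD_e(\cE^\mu)$ in Lemma~\ref{e-2}. On the one hand $\cE^\mu(h,h^-)=\int_E f\,h^-\,dm\ge0$, because $f\ge0$ and $h^-\ge0$; on the other hand
\[
\cE^\mu(h,h^-)=\cE^\mu(h^+,h^-)-\cE^\mu(h^-)\le-\cE^\mu(h^-)\le0
\]
by the inequality just proved together with the positive semi-definiteness of $\cE^\mu$. Hence $\cE^\mu(h^-)=0$, and since $(\cD_e(\cE^\mu),\cE^\mu)$ is a Hilbert space we conclude $h^-=0$ $m$-a.e., so $h=h^+\ge0$ $m$-a.e.

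I do not foresee a genuine obstacle: the computation is the usual first/second Beurling--Deny manipulation, and the inputs (positive-preservation of $\cE^\mu$, the coincidence $\cD_e(\cE^\mu)=\cD_e(\cE)$, the Hilbert-space property) are already available from Section~2 and Lemma~\ref{e-1}. The only mildly delicate point is extending the positive-preserving inequality $\cE^\mu(|h|)\le\cE^\mu(h)$ from $\cD(\cE^\mu)$ to $\cD_e(\cE^\mu)$; I would handle it, as above, by transferring the question to the extended Dirichlet space $\cD_e(\cE)$, where stability under normal contractions is classical, rather than passing the inequality through an $\cE^\mu$-Cauchy sequence and having to argue continuity of $v\mapsto|v|$ directly.
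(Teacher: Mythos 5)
Your proposal is correct and follows essentially the same route as the paper's proof: test the identity of Lemma \ref{e-2} against $h^-$, use $f\ge 0$ together with the positive-preserving inequality $\cE^\mu(h^+,h^-)\le 0$ to force $\cE^\mu(h^-)=0$, and conclude $h^-=0$. The only cosmetic differences are that you spell out the extension of the positive-preserving property to $\cD_e(\cE^\mu)=\cD_e(\cE)$ in detail and finish via the Hilbert-space property of $(\cD_e(\cE^\mu),\cE^\mu)$, whereas the paper concludes via Lemma \ref{e-1} and the transience of $(\cE,\cD(\cE))$, which is an equivalent step.
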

\begin{proof}
Since $\cE^\mu(h,\psi)=\int_Ef\psi dm\ge 0$
 for any non-negative $\psi\in\cD_e(\cE^\mu)$,  
 $\cE^\mu(h,h^-)\ge 0$ and so 
 $\cE^\mu(h^+,h^-)\ge \cE^\mu(h^-)$. By the positive preserving property of $\cE^\mu$
 $$
 0\le \cE^\mu(h^-)\le\cE^\mu(h^+,h^-)\le 0,
 $$
and thus $\cE^\mu(h^-)=0$. Hence $\cE(h^-)=0$ by Lemma \ref{e-1} 
and $h^-=0$ by the transience of $(\cE,\cD(\cE))$.
\end{proof}

We showed in some paragraphs before Lemma \ref{ex-ine} that the semigroup $T^\mu_t$
 can be extended to a operator on $\cD_e(\cE^\mu)$. By the same argument for 
 $T^\mu_t$, the resolvent $G^\mu_\alpha$, $\alpha>0$ can also be extended to a operator on $\cD_e(\cE^\mu)$.

\begin{lem}\la{e-4}
Suppose $\lambda (\mu)>1$. Let $f$ and $h$ be functions in Lemma \ref{e-2}. Then 
$$
h-\alpha G^\mu_\alpha h=G^\mu_\alpha f.
$$
\end{lem}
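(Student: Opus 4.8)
Lemma~\ref{e-4} is, in substance, the resolvent equation $G^\mu_0-G^\mu_\alpha=\alpha G^\mu_\alpha G^\mu_0$ for the ``$0$-resolvent'' $G^\mu_0 f:=h$ singled out by Lemma~\ref{e-2}, and I will prove it by testing both sides against a separating family of functions and reducing every computation to $\cD(\cE^\mu)$ via approximating sequences. The plan is as follows. Fix a non-negative $w\in L^1(E;m)\cap L^2(E;m)$. Extending $G^\mu_\alpha$ to non-negative functions by $G^\mu_\alpha\psi=\int_0^\infty e^{-\alpha t}T^\mu_t\psi\,dt$, Tonelli's theorem and the symmetry of $T^\mu_t$ give $\int_E (G^\mu_\alpha|f|)\,w\,dm=\int_E |f|\,(G^\mu_\alpha w)\,dm$. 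Since $G^\mu_\alpha w\ge 0$ belongs to $\cD(\cE^\mu)\subset\cD_e(\cE^\mu)=\cD_e(\cE)$ (Lemma~\ref{e-1} and the discussion following it), the membership $f\in\cL$ bounds the right-hand side by $C\,\cE(G^\mu_\alpha w)^{1/2}<\infty$; hence $G^\mu_\alpha|f|<\infty$ $m$-a.e., $G^\mu_\alpha f:=G^\mu_\alpha f^+-G^\mu_\alpha f^-$ is well defined, and $\int_E (G^\mu_\alpha f)\,w\,dm=\int_E f\,(G^\mu_\alpha w)\,dm$.

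Next I would rewrite $\int_E f\,(G^\mu_\alpha w)\,dm$ using Lemma~\ref{e-2}, which applies because $G^\mu_\alpha w\in\cD_e(\cE^\mu)$, obtaining $\cE^\mu(h,G^\mu_\alpha w)=\cE^\mu(G^\mu_\alpha w,h)$. Picking an approximating sequence $\{h_n\}\subset\cD(\cE^\mu)$ of $h$, so that $\cE^\mu(h-h_n)\to 0$ by Lemma~\ref{fo1}, $\cE^\mu$-continuity gives $\cE^\mu(G^\mu_\alpha w,h)=\lim_n\cE^\mu(G^\mu_\alpha w,h_n)$. For each $n$, the standard identity $\cE^\mu(G^\mu_\alpha w,v)=\int_E (w-\alpha G^\mu_\alpha w)\,v\,dm$ (from the spectral representation $-\cH^\mu=\int_0^\infty\lambda\,dE_\lambda$ used before Lemma~\ref{ex-ine}) applies with $v=h_n$, and the $L^2$-selfadjointness of $G^\mu_\alpha$ together with $h_n\in L^2(E;m)$ turns $\int_E (w-\alpha G^\mu_\alpha w)\,h_n\,dm$ into $\int_E w\,(h_n-\alpha G^\mu_\alpha h_n)\,dm$. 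To pass to the limit I use that $h_n-\alpha G^\mu_\alpha h_n\to h-\alpha G^\mu_\alpha h$ in $L^2(E;m)$, which rests on the estimate $\|v-\alpha G^\mu_\alpha v\|_2^2\le\frac{1}{4\alpha}\cE^\mu(v)$ for $v\in\cD(\cE^\mu)$ (from $\lambda/(\lambda+\alpha)^2\le 1/(4\alpha)$ in the spectral integral); this same estimate is what lets one extend $v\mapsto v-\alpha G^\mu_\alpha v$ to $\cD_e(\cE^\mu)$, exactly as the semigroup was extended in the paragraphs before Lemma~\ref{ex-ine}. Chaining the equalities yields $\int_E (G^\mu_\alpha f)\,w\,dm=\int_E (h-\alpha G^\mu_\alpha h)\,w\,dm$ for every non-negative $w\in L^1(E;m)\cap L^2(E;m)$; since $h-\alpha G^\mu_\alpha h\in L^2(E;m)$ (same estimate, now on $\cD_e(\cE^\mu)$), this forces $h-\alpha G^\mu_\alpha h=G^\mu_\alpha f$ $m$-a.e.

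I expect the only delicate point to be the passage to the extended Schr\"odinger space. Neither $h$ nor $G^\mu_\alpha h$ need lie in $L^2(E;m)$, so one may not split $\int_E (w-\alpha G^\mu_\alpha w)h\,dm$ as $\int_E wh\,dm-\alpha\int_E (G^\mu_\alpha w)h\,dm$; the remedy is to carry out this splitting only at the level of the approximants $h_n\in\cD(\cE^\mu)\subset L^2(E;m)$ and to take limits afterwards, using that the \emph{combination} $h_n-\alpha G^\mu_\alpha h_n$ converges in $L^2(E;m)$ even though its two summands need not. The companion point---that $G^\mu_\alpha f$ is finite $m$-a.e. and that the identity splits consistently over $f^+$ and $f^-$---is exactly what $\cL$-membership of $f$ (hence of $f^\pm$) together with $\cD_e(\cE^\mu)=\cD_e(\cE)$ supplies, and the remaining measure-theoretic bookkeeping is routine. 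Alternatively one could argue first for $f\ge 0$ (where $h\ge 0$ by Lemma~\ref{e-3} and all quantities are monotone) and then extend by linearity, but the finiteness issue arises in the same way.
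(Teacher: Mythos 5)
Your argument is correct, but it identifies the two sides in a different way than the paper does. The paper stays entirely inside the form calculus: it builds an approximating sequence $\{h_n\}$ of $h$ for which $\{G^\mu_\alpha h_n\}$ approximates $G^\mu_\alpha h$, shows $\cE^\mu_\alpha(G^\mu_\alpha h,\psi)=(h,\psi)_m$ for every $\psi\in\cD(\cE)\cap C_0(E)$ (using $\psi\in\cL$ to pass to the limit), and then computes $\cE^\mu_\alpha(h-\alpha G^\mu_\alpha h,\psi)=\int_E f\psi\,dm=\cE^\mu_\alpha(G^\mu_\alpha f,\psi)$ for all such $\psi$, concluding by density in the $\cE^\mu_\alpha$-sense; in doing so it implicitly treats $G^\mu_\alpha f$ as an element of the form domain satisfying $\cE^\mu_\alpha(G^\mu_\alpha f,\psi)=(f,\psi)_m$ even though $f\in\cL$ need not be in $L^2(E;m)$. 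You instead dualize in $L^2$: you push the resolvent onto the test function $w\in L^1\cap L^2$ via symmetry and positivity, invoke Lemma \ref{e-2} with $\varphi=G^\mu_\alpha w$, and pass to the limit along $\{h_n\}$ using the quantitative bound $\|v-\alpha G^\mu_\alpha v\|_2^2\le \cE^\mu(v)/(4\alpha)$, which is exactly the mechanism by which $G^\mu_\alpha$ is extended to $\cD_e(\cE^\mu)$ in the paper, so your limit $h_n-\alpha G^\mu_\alpha h_n\to h-\alpha G^\mu_\alpha h$ in $L^2$ is legitimate and the conclusion is an $m$-a.e. identity of functions rather than an identity of forms. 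What your route buys is a clean meaning for $G^\mu_\alpha f$ when $f\in\cL\setminus L^2$ (defined through $f^\pm$ and positivity, with a.e. finiteness established by duality against $G^\mu_\alpha w$), a point the paper's proof glosses over; what the paper's route buys is brevity, since testing in $\cE^\mu_\alpha$ against the core $\cD(\cE)\cap C_0(E)$ makes the cancellation of the $\alpha(h,\psi)_m$ terms immediate. Your version is also the form of the statement actually used later (e.g.\ $G^\mu_\alpha f\le h$ pointwise in Lemma \ref{e-5}), so no gap is created downstream.
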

\begin{proof}
We can construct an approximating sequence $\{h_n\}\subset \cD(\cE^\mu)$ of 
$h$ such that $\{G^\mu_\alpha h_n\}$ is an approximation sequence of $G_\alpha^\mu h$. 
Hence, we have for $\psi\in\cD(\cE)\cap C_0(E)$
  \begin{align*}
\cE^\mu_\alpha(G^\mu_\alpha h,\psi)=\lim_{n\to\infty}\cE^\mu_\alpha(G^\mu_\alpha h_n,\psi)
=\lim_{n\to\infty}(h_n,\psi)=(h,\psi)_m
 \end{align*}
because $\psi\in\cL$ and  
\begin{align}\la{domi}
\int_E(h-h_n)\psi dm&\le C\cE(h-h_n)^{1/2}.
\end{align}
Hence
 \begin{align*}
\cE^\mu_\alpha(h-\alpha G^\mu_\alpha h,\psi)&=\int_Ef\psi dm -\alpha\int_Eh\psi dm+\alpha\int_Eh\psi dm\\
&=\int_Ef\psi dm=\cE^\mu_\alpha(G^\mu_\alpha f,\psi),\ \forall \psi\in\cD(\cE)\cap C_0(E).
 \end{align*}
\end{proof}

\begin{rem}
We see from (\ref{domi}) that $\psi\in\cD(\cE)\cap C_0(E)$ belongs to the space $\cL$.
\end{rem}

\begin{lem}\la{e-5}
For $f\in\cL$, the function $G^\mu f$ belongs to $\cD_e(\cE^\mu)$ and
equals $h$ defined in Lemma \ref{e-2}. 
\end{lem}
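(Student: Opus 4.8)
The plan is to read off the identity $G^\mu f=h$ from the resolvent identity $h-\alpha G^\mu_\alpha h=G^\mu_\alpha f$ of Lemma \ref{e-4} by letting $\alpha\downarrow 0$; the only real content is that the ``harmonic part'' $\alpha G^\mu_\alpha h$ vanishes in the limit. Throughout we keep the standing hypothesis $\lambda(\mu)>1$ of Lemmas \ref{e-2}--\ref{e-4}, so that by Lemma \ref{e-1} the space $(\cD_e(\cE^\mu),\cE^\mu)$ is a Hilbert space with $\cE^\mu$ comparable to $\cE$.

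First I would treat $f\ge 0$. Then the representative $h$ from Lemma \ref{e-2} is non-negative by Lemma \ref{e-3}, and $G^\mu_\alpha h\ge 0$ since $T^\mu_t$ is positivity preserving (the form $\cE^\mu$ being positive preserving). Because $e^{-\alpha t}\uparrow 1$ as $\alpha\downarrow 0$, monotone convergence gives $G^\mu_\alpha f\uparrow G^\mu f$ pointwise (a priori valued in $[0,\infty]$); hence by Lemma \ref{e-4} the functions $\alpha G^\mu_\alpha h=h-G^\mu_\alpha f$ decrease to some $w:=h-G^\mu f\ge 0$, so in particular $G^\mu f\le h<\infty$ $m$-a.e.

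Next I would show $w=0$. Using the spectral resolution $-\cH^\mu=\int_{(0,\infty)}\lambda\,dE_\lambda$,
\[
\cE^\mu(\alpha G^\mu_\alpha h)=\int_{(0,\infty)}\lambda\Bigl(\tfrac{\alpha}{\alpha+\lambda}\Bigr)^2 d(E_\lambda h,h)\ \xrightarrow[\ \alpha\downarrow 0\ ]{}\ 0
\]
by dominated convergence (the integrand is dominated by $\lambda\mapsto\lambda$, which is integrable since $\int_{(0,\infty)}\lambda\,d(E_\lambda h,h)=\cE^\mu(h)<\infty$, and tends to $0$ pointwise). Choosing $\alpha_n\downarrow 0$, the functions $v_n:=\alpha_n G^\mu_{\alpha_n}h\in\cD_e(\cE^\mu)$ form an $\cE^\mu$-Cauchy sequence (triangle inequality for the seminorm $\sqrt{\cE^\mu(\cdot)}$, as $\cE^\mu(v_n)\to 0$) and converge $m$-a.e. to $w$; hence $w\in\cD_e(\cE^\mu)$ with $\cE^\mu(w)=\lim_n\cE^\mu(v_n)=0$. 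Then $\cE(w)\le c^{-1}\cE^\mu(w)=0$ by Lemma \ref{e-1}, so $w=0$ $m$-a.e. by transience of $(\cE,\cD(\cE))$. Thus $G^\mu f=h\in\cD_e(\cE^\mu)$ when $f\ge 0$.

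Finally, for general $f\in\cL$ I would split $f=f^+-f^-$; since $|f^\pm|\le|f|$ we have $f^\pm\in\cL$, and the case above applied to $|f|$ gives $G^\mu|f|<\infty$ $m$-a.e., whence $G^\mu_\alpha f\to G^\mu f$ $m$-a.e. by dominated convergence. Combining this with $\alpha G^\mu_\alpha h\to 0$ $m$-a.e. (the spectral argument above applies verbatim to the now signed $h$) and Lemma \ref{e-4} yields $G^\mu f=h$; alternatively $G^\mu f=h^+-h^-$ equals the representative of $f$ by uniqueness in Lemma \ref{e-2}. I expect the main obstacle to be exactly the vanishing of $w$: it relies on $\cE^\mu$ being a genuine, complete inner product on $\cD_e(\cE^\mu)$, which is precisely what $\lambda(\mu)>1$ supplies through Lemma \ref{e-1} and what fails at the borderline $\lambda(\mu)=1$, matching the paper's remark. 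A secondary technical point is that $\{v_n\}$, a priori only in $\cD_e(\cE^\mu)$, may be used as an approximating sequence — the standard fact that $\cD_e(\cE^\mu)$ is closed under $m$-a.e. limits of its own $\cE^\mu$-Cauchy sequences.
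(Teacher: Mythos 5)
Your route is genuinely different from the paper's and, in outline, it works. The paper never touches the ``harmonic part'' $\alpha G^\mu_\alpha h$ quantitatively: it bounds $\cE^\mu(G^\mu_\alpha f)\le \int_E fG^\mu_\alpha f\,dm\le\int_E hf\,dm<\infty$ uniformly in $\alpha$, extracts by Banach--Alaoglu a subsequence $G^\mu_{\alpha_n}f$ converging $\cE^\mu$-weakly to the monotone limit $G^\mu f\in\cD_e(\cE^\mu)$, and then identifies $h=G^\mu f$ by checking $\cE^\mu(h,\psi)=\cE^\mu(G^\mu f,\psi)$ for all $\psi$, using $\alpha_n(G^\mu_{\alpha_n}f,\psi)_m\le\alpha_n(h,|\psi|)_m\to0$ and completeness of $(\cD_e(\cE^\mu),\cE^\mu)$. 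You instead show directly that $w=h-G^\mu f=\lim_\alpha \alpha G^\mu_\alpha h$ has zero energy and then kill it by Lemma \ref{e-1} plus transience. Your argument is arguably more transparent about where $\lambda(\mu)>1$ enters, while the paper's avoids any norm-convergence statement for $\alpha G^\mu_\alpha h$ and gets the weak identity almost for free from the uniform energy bound.

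One step of yours needs repair as written: the spectral formula $\cE^\mu(\alpha G^\mu_\alpha h)=\int_{(0,\infty)}\lambda\bigl(\alpha/(\alpha+\lambda)\bigr)^2 d(E_\lambda h,h)$ presupposes $h\in L^2(E;m)$ (indeed $h\in\cD(\cE^\mu)$), whereas Lemma \ref{e-2} only gives $h\in\cD_e(\cE^\mu)$, and elements of the extended space need not be square-integrable (this is exactly the point of introducing extended spaces in this paper). The fix is routine but should be said: by Lemma \ref{fo1} choose $h_n\in\cD(\cE^\mu)$ with $\cE^\mu(h-h_n)\to0$; the spectral calculus gives the contraction $\cE^\mu(\alpha G^\mu_\alpha u)\le\cE^\mu(u)$ on $\cD(\cE^\mu)$, which passes to the extended operator, so
$\cE^\mu(\alpha G^\mu_\alpha h)^{1/2}\le\cE^\mu(h-h_n)^{1/2}+\cE^\mu(\alpha G^\mu_\alpha h_n)^{1/2}$,
and letting first $\alpha\downarrow0$ (your dominated-convergence argument, now legitimately applied to $h_n$) and then $n\to\infty$ yields $\cE^\mu(\alpha G^\mu_\alpha h)\to0$. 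With that patch, your Cauchy-sequence step (justified here by the Hilbert-space property of $(\cD_e(\cE^\mu),\cE^\mu)$ from Lemma \ref{e-1} and the $L^1(g\,dm)$-embedding from transience) and the reduction of signed $f$ to $f^\pm$ go through.
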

\begin{proof}
We may suppose $f$ is non-negative, $f\ge 0$. Noting that $G^\mu_\alpha f\le h$ by Lemma \ref{e-4}, we have 
 \begin{align*}
\cE^\mu(G^\mu_\alpha f)&\le\cE^\mu_\alpha(G^\mu_\alpha f)=\int_E fG^\mu_\alpha f dm \le \int_Eh f dm<\infty,
 \end{align*}
and so $\sup_{\alpha>0}\cE^\mu(G^\mu_\alpha f)<\infty.$ Since 
$G^\mu_\alpha f\uparrow G^\mu f$ as $\alpha\to 0$, we see from Banach-Alaoglu theorem that 
for a certain sequence $\alpha_n\downarrow 0$, $G^\mu_{\alpha_n} f$ converges $\cE^\mu$-weakly to $G^\mu f\in\cD_e(\cE^\mu)$.

Since 
$$
|\alpha_n(G^\mu_{\alpha_n} f,\psi)_m|\le \alpha_n(h,|\psi|)_m\to 0,\ \ n\to \infty,
$$
we have 
$$
\cE^\mu(h,\psi)=\int_E f\psi dm=\cE_{\alpha_n}^\mu(G^\mu_{\alpha_n} f,\psi)\to \cE^\mu(G^\mu f,\psi), \ \ n\to \infty.
$$
Hence, $\cE^\mu(h,\psi)=\cE^\mu(G^\mu f,\psi)$ for any $\psi\in\cD_e(\cE^\mu)$, and $h=G^\mu f$.
\end{proof}

The fact in Lemma \ref{e-5} above is proved in \cite{BBGM} for Dirichlet forms generated by 
rotationally symmetric $\alpha$-stable processes.

\begin{thm}\la{ma2}
If $\lambda (\mu)>1$, then $(\cE^{\mu}, \cD(\cE^{\mu}))$ is \emph{subcritical}.
\end{thm}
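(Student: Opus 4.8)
The plan is to exhibit the weight $g$ required by Definition \ref{def-criticality}(1) by transporting the transience weight of $(\cE,\cD(\cE))$ across the spectral comparison of Lemma \ref{e-1}. First I would unwind the consequences of $\lambda(\mu)>1$ recorded just after Lemma \ref{e-1}: the two‑sided estimate $c\,\cE(u)\le\cE^\mu(u)\le\cE(u)$ (where, from the proof of Lemma \ref{e-1}, $c=1-1/\lambda(\mu)$, and we may assume $c\le1$) together with the regularity of $(\cE,\cD(\cE))$ forces $\cD(\cE^\mu)=\cD(\cE)$, so the $\cE^\mu$‑Cauchy sequences are exactly the $\cE$‑Cauchy sequences; hence $\cD_e(\cE^\mu)=\cD_e(\cE)$, and the comparison $c\,\cE(u)\le\cE^\mu(u)$ persists for every $u\in\cD_e(\cE^\mu)$.

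Next I would invoke the standing assumption that $(\cE,\cD(\cE))$ is transient (Definition \ref{def-transient}(1)): there is a bounded $g_0\in L^1(E;m)$, strictly positive $m$‑a.e., with $\int_E|u|g_0\,dm\le\sqrt{\cE(u)}$ for all $u\in\cD(\cE)$. This inequality passes to the extended space $\cD_e(\cE)$ in the usual way: for $u\in\cD_e(\cE)$ with approximating sequence $\{u_n\}\subset\cD(\cE)$ one has $\cE(u_n)\to\cE(u)$, and Fatou's lemma applied to the measure $g_0\,dm$ gives $\int_E|u|g_0\,dm\le\liminf_n\int_E|u_n|g_0\,dm\le\liminf_n\sqrt{\cE(u_n)}=\sqrt{\cE(u)}$ (the same idea underlying Lemma \ref{fo1}).

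Finally I would set $g:=\sqrt{c}\,g_0$. Since $c\le1$, $g\le g_0$ is bounded; it lies in $L^1(E;m)$ and is strictly positive $m$‑a.e. For $u\in\cD_e(\cE^\mu)=\cD_e(\cE)$,
$$\int_E|u|g\,dm=\sqrt{c}\int_E|u|g_0\,dm\le\sqrt{c}\,\sqrt{\cE(u)}\le\sqrt{\cE^\mu(u)},$$
which is exactly inequality (\ref{tra}); therefore $(\cE^\mu,\cD(\cE^\mu))$ is subcritical.

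I do not expect a genuine obstacle here: once Lemma \ref{e-1} and the identification $\cD_e(\cE^\mu)=\cD_e(\cE)$ are in hand, the statement reduces to rescaling the transience weight, the only mild care being the extension of the defining inequalities from $\cD(\cE)$ to the extended spaces. If one preferred an argument in the style of the later sections (and closer to the phrasing in the introduction), one could instead build $g$ directly out of $G^\mu$: take compact sets $K_n\uparrow E$, use assumption (\ref{BD}) and Lemma \ref{e-5} to get $G^\mu 1_{K_n}\in\cD_e(\cE^\mu)$ with $\int_E 1_{K_n}G^\mu 1_{K_n}\,dm<\infty$, choose $a_n\downarrow0$ fast enough that $g:=\sum_n a_n1_{K_n}$ satisfies $\int_E gG^\mu g\,dm\le1$ (bounding $G^\mu g\le\sum_n a_nG^\mu 1_{K_n}$ and using monotone convergence), and then derive $\int_E|u|g\,dm\le\sqrt{\cE^\mu(u)}$ on $\cD(\cE^\mu)$ from the identity $\sup_{u\in\cD(\cE^\mu)}(|u|,f)/\sqrt{\cE^\mu(u)}=\sqrt{\int_E fG^\mu f\,dm}$ used in the proof of Lemma \ref{77}, extending to $\cD_e(\cE^\mu)$ by Fatou exactly as above.
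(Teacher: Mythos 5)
Your argument is correct, but it takes a different route from the paper. The paper proves Theorem \ref{ma2} through the $h$-transform characterization of Theorem \ref{tai}(i): it takes a nontrivial non-negative $f\in C_0(E)$, uses the resolvent machinery of Section 3 (assumption (\ref{BD}) and Lemma \ref{e-5}) to get $h=G^\mu f\in\cD_e(\cE^\mu)$, notes $T^\mu_tG^\mu f\le G^\mu f$ so that $h\in\cH^+(\mu)$, and then observes $\cE^{\mu,h}(1)=\cE^\mu(h)=\int_E fG^\mu f\,dm>0$, so the $h$-transformed Dirichlet form has killing and is transient, whence subcriticality. You instead verify the defining inequality (\ref{tra}) directly: using Lemma \ref{e-1} and the identification $\cD(\cE^\mu)=\cD(\cE)$, $\cD_e(\cE^\mu)=\cD_e(\cE)$ (which the paper itself records right after Lemma \ref{e-1}, and whose justification — equivalence of the two norms on the common core $\cD(\cE)\cap C_0(E)$ — you correctly indicate), you rescale the transience weight $g_0$ of the underlying form by $\sqrt{c}$ with $c=1-1/\lambda(\mu)$, extending both the transience inequality and the comparison $c\,\cE\le\cE^\mu$ to the extended space by the standard Fatou/approximating-sequence argument. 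This is more elementary: it does not use assumption (\ref{BD}), the resolvent lemmas \ref{e-2}--\ref{e-5}, the excessivity of $G^\mu f$, or Theorem \ref{tai}. What the paper's route buys is an explicit function $h=G^\mu f\in\cD_e(\cE^\mu)\cap\cH^+(\mu)$ whose $h$-transform visibly has a killing part, which is the form in which subcriticality is exploited later; but once subcriticality is established by your argument, the same conclusion is recovered from Lemma \ref{ma} and Lemma \ref{well1}, so nothing essential is lost. Your alternative sketch via $G^\mu 1_{K_n}$ and the duality identity from Lemma \ref{77} is closer in spirit to the paper's Section 3 but is not needed.
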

\begin{proof}
For a non-negative function $f\in C_0(E)$ with $f\not\equiv 0$, $h=G^\mu f$ belongs to $\cH^+(\mu)$ because
$$
T^\mu_tG^\mu f(x)\le G^\mu f(x)
$$
by lemma \ref{ex}. Since
$$
\cE^{\mu,h}(1)=\cE^\mu(h)=\int_Ef G^\mu f dm >0,
$$
$(\cE^{\mu,h},\cD(\cE^{\mu,h}))$ is transient, more precisely, has the killing part. 
\end{proof}

\begin{rem} 
As stated in Introduction, for a general 
positive smooth Radon measure with $\lambda(\mu)=1$ we cannot construct the ground state 
of $(\cE^\mu,\cD(\cE^\mu))$, and we do not know whether $(\cE^\mu,\cD(\cE^\mu))$ is critical or not.
\end{rem}

\section{Probabilistic representation of Schr\"{o}dinger semigroups}
 In this section, we give a sufficient condition for $\mu$ that $(\cE^\mu,\cD(\cE)\cap C_0(E))$ is closable
 and its Schr\"{o}dinger semigroup $T^\mu_t$ can be expressed by a Feynman-Kac semigroup (\ref{f-k}).

 Let $X= (\Om, \cF, \{\cF_{t}\}_{t \ge 0}, \{P_x\}_{x \in E}, \{X_t\}_{t \ge 0},
\zeta)$ be the symmetric Hunt process generated by $(\cE, \cD(\cE))$,
where $\{ \cF_t\}_{t \ge 0}$ is the augmented filtration and $\zeta $ is the lifetime of $X$.
 Denote by $\{p_t\}_{t\geq 0}$ and  $\{R_\alpha\}_{\alpha \geq 0} $ the semigroup and resolvent 
of $X$:
$$
p_tf(x)=E_x(f(X_t)),\ \ \ \ \ R_\alpha f(x)=\int_0^\infty e^{-\alpha t} p_tf(x)dt.
$$
Then $p_tf(x)=T_tf(x)$ $m$-a.e., $R_\alpha f(x)=\int_0^\infty T_tf(x)dt$ $m$-a.e.
Let us remember that $(\cE,\cD(\cE))$ is supposed to be irreducible and transient throughout this paper. 
Consequently, the corresponding Markov process $X$ 
is irreducible and transient. 
In the sequel, we assume that $X$ satisfies, in addition, the next condition:  

\medskip
{\textbf{Strong Feller Property (SF)}}. \  For each $t$,
      $p_t ({\mathscr  B}_b(E))\subset C_b(E)$, 
      where $C_b(E)$ is the space of bounded continuous functions on $E$.
\medskip

We remark that \textbf{(SF)} implies 

\medskip
{\textbf{Absolute Continuity Condition (AC)}}. \ The transition probability of $X$ is absolutely continuous with respect to $m$, $p(t,x,dy)=p(t,x,y)m(dy)$ for each $t>0$ and $x\in E$.

\medskip
Under {\textbf{(AC)}}, there exists a non-negative, jointly measurable $\alpha$-resolvent kernel $R_{\alpha}(x,y)$: 
For $x\in E$ and $f\in{\mathscr B}_b(E)$
$$
R_\alpha f(x)=\int_ER_\alpha (x,y)f(y)m(dy).
$$
Moreover, $R_\alpha (x,y)$ is $\alpha $-excessive in $x$ and in $y$ (\cite[Lemma 4.2.4]{FOT}).
We  simply write $R(x,y)$ for $R_{0}(x,y)$. 
For a measure $\mu$, we define the $\alpha $-potential of $\mu$ by
\begin{equation*}
R_{\alpha} \mu(x) = \int_{E} R_{\alpha}(x,y) \mu(dy).
\end{equation*}

Let $S_{00}$ be the set of positive Borel measures $\mu$ such that $\mu(E)<\infty $ and $R_1\mu$ is bounded.
We call a Borel measure $\mu$ on $E$ {\it smooth} if there exists a sequence $\{E_n\}$ of Borel sets increasing to 
$E$ such that for each $n$ $1_{E_n}\cdot \mu\in S_{00}$ and for any $x\in E$
$$
P_x(\lim_{n\to\infty }\sigma _{E\setminus E_n}\geq \zeta )=1,
$$
where $\sigma _{E\setminus E_n}$ is the first hitting time of ${E\setminus E_n}$.  
We denote by $S$ the set of smooth, positive Borel measures. 
 In \cite{FOT}, a measure in $S$ is called a {\it smooth measure in the strict sense}. In the sequel,
  we omit the adjective phrase 
``in the strict sense" .

\begin{defi}\label{def-Kato}
Suppose that $\mu\in S$ is a positive smooth measure.
\begin{enumerate}[(1)]
	\item $\mu$ is said to be in the {\it Kato class} of $X$
	($\cK(X)$ in abbreviation)
	if 
	\begin{equation*}
	\lim_{\alpha \to \infty} \| R_{\alpha} \mu \|_{\infty} = 0.
	\end{equation*}
	$\mu$ is said to be in the {\it local Kato class} ($\cK_{loc}(X)$ in abbreviation)
	if for any compact set $K$,  $1_K\cdot \mu$ belongs to $\cK(X)$.	
	\item Suppose that $X$ is transient. A measure $\mu$ is said
	to be in the class $\cK_{\infty}(X)$ if for any $\eps > 0$, 
	there exists a compact set $K = K(\eps)$ 
	\begin{equation*}
	\sup_{x \in E} \int_{K^c} R(x,y)\mu(dy) < \eps.
	\end{equation*}
	$\mu$ in $\cK_{\infty}(X)$ is called {\it Green-tight}. 
\end{enumerate}
\end{defi}

A stochastic process $\{A_{t}\}_{t\geq 0}$ is said to be an {\em additive
functional\/} 
(AF in abbreviation)
if the following conditions hold:

\smallskip
(i) \ $A_{t}(\cdot)$ is ${\cF}_{t}$-measurable for all $t\geq 0$.

(ii) there exists a set $\Lambda\in
       {\cF}_{\infty}=\sigma\left(\cup_{t\geq 0}{\cF}_{t}\right)$ such
       that 
       $P_{x}(\Lambda)=1$, for all $x\in  X$,
       $\theta_{t}\Lambda\subset\Lambda$ for all $t>0$,
       and for each $\omega\in \Lambda$, $A_{\cdot}(\omega)$ is a
       function satisfying:
       $A_{0}=0$, $A_{t}(\omega)<\infty$ for $t<\zeta(\omega)$,
       $A_{t}(\omega)=A_{\zeta}(\omega)$ for $t\geq \zeta $,
       and $A_{t+s}(\omega)=A_{t}(\omega)+A_{s}(\theta_{t}\omega)$ for
       $s,t\geq 0$.

\smallskip
\noindent
If an AF $\{A_{t}\}_{t\geq 0}$ is positive and continuous with respect
to $t$ for each $\omega\in \Lambda$, the AF is called a {\em positive
continuous additive functional\/} (PCAF in abbreviation). The set of all PCAF's is denoted by ${\bf A}^+_c$. 
The family $S$ and ${\bf A}^+_c$ are in one-to-one correspondence ({\it Revuz correspondence}) 
as follows:
for each smooth measure $\mu$, there exists a unique PCAF
$\{A_{t}\}_{t\geq 0}$
such that 
for any $f\in {\mathscr B}^+(E)$ and $\gamma$-excessive
function $h$ ($\gamma\geq 0$),
that is, $e^{-\gamma{t}}p_{t}h\leq h$, 
\begin{equation}\label{eq:7}
 \lim_{t\rightarrow 0}\frac{1}{t}E_{h\cdot{m}}
  \left(\int_{0}^{t}f(X_{s})dA_{s}\right)
  =\int_{E}f(x)h(x)\mu(dx)
\end{equation}
(\cite[Theorem 5.1.7]{FOT}). 
Here, $E_{h\cdot{m}}(\, \cdot\, )=\int_{E}E_{x}(\, \cdot\, )h(x)m(dx)$.
 We denote by $A^{\mu}_t$ the PCAF corresponding to $\mu\in S$.

 \begin{thm}\la{Id}
 Let $\mu\in \cK_{\loc}(X)$. If $(\cE^\mu,\cD(\cE)\cap C_0(E))$ is positive semi-definite, then it is closable.   
Moreover, the semigroup $T^\mu_t$ generated by the closure $(\cE^\mu,\cD(\cE^\mu))$ is expressed as 
$$
T^\mu_tf(x)=p^\mu_tf(x)=E_x\left(e^{A^\mu_t}f(X_t)\right)\ \ m\text{-a.e.}
$$
\end{thm}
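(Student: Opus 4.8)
The plan is to derive closability from the perturbation theory of Albeverio--Ma and then obtain the probabilistic representation by exhausting $\mu$ with Kato-class measures. Since $\mu\in\cK_{\loc}(X)$, it charges no set of zero capacity and (by the footnoted remark) is Radon, and by hypothesis $(\cE^\mu,\cD(\cE)\cap C_0(E))$ is lower semibounded. First I would invoke \cite{AM}: for a perturbation of a regular Dirichlet form by a signed smooth measure, lower semiboundedness on the core $\cD(\cE)\cap C_0(E)$ already implies closability there. This gives the closed form $(\cE^\mu,\cD(\cE^\mu))$ and its $L^2$-semigroup $\{T^\mu_t\}$.

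Next, fix an increasing sequence of compact sets $K_n$ with $\bigcup_n K_n=E$ and set $\mu_n:=1_{K_n}\cdot\mu$. By the definition of $\cK_{\loc}(X)$ each $\mu_n$ lies in $\cK(X)$, its PCAF is $A^{\mu_n}_t=\int_0^t 1_{K_n}(X_s)\,dA^\mu_s$, and the Kato bound $\lim_{\alpha\to\infty}\|R_\alpha\mu_n\|_\infty=0$ yields $\int_E u^2\,d\mu_n\le\varepsilon\,\cE(u)+C_\varepsilon\|u\|_2^2$ on $\cD(\cE)$ with $\varepsilon$ arbitrarily small. Hence, by the KLMN theorem, $\cE^{\mu_n}$ with domain $\cD(\cE)$ is closed, and for Kato-class measures the Feynman--Kac identity $T^{\mu_n}_tf(x)=p^{\mu_n}_tf(x)=E_x(e^{A^{\mu_n}_t}f(X_t))$ $m$-a.e. is classical (see \cite{AM}, \cite[Ch.~6]{FOT}). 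Moreover $\cE^{\mu_n}(u)\ge\cE^\mu(u)$ pointwise on $\cD(\cE)$ (as $\mu_n\le\mu$), and since $\cD(\cE)\cap C_0(E)$ is dense in $\cD(\cE)$ in the form topology of $\cE^{\mu_n}$ (that topology coinciding with the $\cE_1$-topology because $\cE^{\mu_n}_\beta\asymp\cE_1$ for $\beta$ large), the hypothesis $\cE^\mu\ge 0$ on the core propagates to $\cE^{\mu_n}\ge 0$ on all of $\cD(\cE)$; in particular $\int_E u^2\,d\mu\le\cE(u)$ for every $u\in\cD(\cE)$ and $\|T^{\mu_n}_t\|_{2,2}\le 1$.

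Then I would pass to the limit in two ways. Since $K_{n+1}\supseteq K_n$, the forms $\cE^{\mu_n}$ decrease to $\cE^\mu$ on their common domain $\cD(\cE)$; by the monotone convergence theorem for decreasing nonnegative closed symmetric forms the nonnegative self-adjoint operators $-\cH^{\mu_n}$ converge in the strong resolvent sense, and — using the closability established above to identify the limit form with $(\cE^\mu,\cD(\cE^\mu))$ — the limit operator is $-\cH^\mu$. Consequently $T^{\mu_n}_t\to T^\mu_t$ strongly on $L^2(E;m)$. On the other hand, $A^{\mu_n}_t\uparrow A^\mu_t$ gives $p^{\mu_n}_tf\uparrow p^\mu_tf$ pointwise for $f\ge 0$, and the uniform bound $\|p^{\mu_n}_t\|_{2,2}\le 1$ together with Fatou and dominated convergence upgrades this to $L^2$-convergence $p^{\mu_n}_tf\to p^\mu_tf$, first for $f\ge 0$ and then for every $f\in L^2(E;m)$. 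Comparing the two limits yields $T^\mu_t f=p^\mu_t f$ $m$-a.e.

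The step I expect to be the real obstacle is closability. A priori the naive decreasing limit $(\cE^\mu,\cD(\cE))$ of the closed forms $\cE^{\mu_n}$ need not be closable: if it were not, the monotone convergence theorem would deliver only the largest closable form below it — a strictly smaller closed form — and the semigroup $\lim_n T^{\mu_n}_t=p^\mu_t$ would fail to be associated with the closure of $\cE^\mu$. What prevents this is precisely the Albeverio--Ma result that, for a measure perturbation of a regular Dirichlet form, lower semiboundedness on $\cD(\cE)\cap C_0(E)$ suffices for closability; establishing (or correctly invoking) it, together with the Khasminskii-type exponential-integrability estimates behind the Kato-class base case, is where the substance lies, and it is exactly here that the positive semi-definiteness hypothesis is used. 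The remaining bookkeeping with monotone limits of forms, operators, and Feynman--Kac functionals is routine.
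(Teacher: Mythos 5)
Your overall strategy (exhaust $\mu$ by Kato-class restrictions, pass to the decreasing-form limit via strong resolvent convergence, and identify the limit semigroup with $p^\mu_t$ by monotone convergence of the Feynman--Kac functionals) is the same as the paper's, and the pieces you handle in detail -- the KLMN/Kato base case, $\cE^{\mu_n}\ge 0$ on all of $\cD(\cE)$ by density, $\int_E u^2\,d\mu\le\cE(u)$, and the $L^2$-upgrade of $p^{\mu_n}_tf\uparrow p^\mu_tf$ -- are fine. The gap is exactly at the step you flag as ``the real obstacle,'' and your way of discharging it does not work. First, the citation you lean on is not what Albeverio--Ma provide: in this setting \cite{AM} gives (i) the Kato-class case (their Proposition 3.1), (ii) the equivalence of lower semiboundedness of $(\cE^\mu,\cD(\cE)\cap L^2(\mu))$ with strong continuity of $p^\mu_t$ (their Theorem 4.1), and (iii) a form-core statement for the closed form generated by $p^\mu_t$ in terms of part forms on a nest (their Theorem 5.5); there is no off-the-shelf statement that lower semiboundedness on $\cD(\cE)\cap C_0(E)$ alone implies closability of $(\cE^\mu,\cD(\cE)\cap C_0(E))$. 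If such a statement were available, the paper's proof would be a one-liner; assuming it amounts to assuming the theorem's first assertion.

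Second, even granting closability, your identification of the strong-resolvent limit with $(\cE^\mu,\cD(\cE^\mu))$ is asserted rather than proved. By the monotone convergence theorem for decreasing closed forms (\cite[Theorem S.16]{RS}), the limit of $T^{\mu_n}_t$ is the semigroup of the closure of the \emph{largest closable form below} $(\cE^\mu,\cD(\cE))$, i.e.\ the relaxation of the form on the big domain $\cD(\cE)$. Closability of the restriction to $\cD(\cE)\cap C_0(E)$ does not tell you that this relaxed form agrees with $\cE^\mu$ on the core, nor that the core is a form core for it; a priori the relaxed form can be strictly smaller even on compactly supported functions, in which case your two limits would identify $p^\mu_t$ with the wrong semigroup. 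The paper closes precisely this hole by first identifying the limit form with the closed form $\widetilde{\cE}^\mu$ generated by $\{p^\mu_t\}$ (strong continuity coming from \cite[Theorem 4.1]{AM}), and then invoking \cite[Theorem 5.5]{AM}: the part spaces $\cD(\widetilde{\cE}^\mu_{\overline{G}_n})$ coincide with $\cD(\cE_{\overline{G}_n})$, $\widetilde{\cE}^\mu=\cE^\mu$ there, and their union is $\widetilde{\cE}^\mu_1$-dense in $\cD(\widetilde{\cE}^\mu)$. Since $\cD(\cE)\cap C_0(E)\subset\bigcup_n\cD(\cE_{\overline{G}_n})$, this simultaneously yields that $\widetilde{\cE}^\mu$ is a closed extension of $(\cE^\mu,\cD(\cE)\cap C_0(E))$ (hence closability) and that the closure is exactly $\widetilde{\cE}^\mu$, which is what the probabilistic representation requires. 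To repair your argument you need this localization/form-core input (or an equivalent substitute); the abstract monotone-limit bookkeeping alone cannot supply it.
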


\begin{proof}
Let $\{G_n\}$ be a sequence of relatively compact open sets such that 
$G_n\subset \overline{G}_n\subset G_{n+1}$ and $G_n\uparrow E$. Denote by $\mu_n$ the restriction of $\mu$ to $\overline{G}_n$, $\mu_n(\cdot)=\mu(\overline{G}_n\cap\cdot)$. 
 Then since $\mu_n$ is in the Kato class $\cK(X)$, 
 $$
 \cE^{\mu_n}(u)=\cE(u)-\int_E{u}^2d\mu_n,\ \ u\in\cD(\cE)
 $$
is a closed form on $L^2(E;m)$ and the associated $L^2(E;m)$-semigroup $\{T^{\mu_n}_t\}$ equals $\{p^{\mu_n}_t\}$
(\cite[Proposition 3.1]{AM}). 
The sequence of closed, positive form $\{(\cE^{\mu_n},\cD(\cE))\}$ is decreasing in the sense of \cite[p.373]{RS}
and 
$$
\cE^\mu(u)=\lim_{n\to\infty}\cE^{\mu_n}(u),\ \ u\in\cD(\cE).
$$
Hence $(\cE^{\mu_n},\cD(\cE))$ converges to $(\cE^{\ast},\cD(\cE^\ast))$  in strong resolvent sense (\cite[Theorem S.16]{RS}), where $(\cE^{\ast},\cD(\cE^\ast))$ is the closure of the largest closable form smaller
 than $(\cE^\mu,\cD(\cE))$. 
 
 Since $(\cE^\mu,\cD(\cE)\cap C_0(E))$ is positive semi-definite, in particular, lower semi-bounded,
  the semigroup $p^\mu_t$ is strongly continuous on $L^2(E;m)$ (\cite[Theorem 4.1]{AM}).\footnote{In \cite[Theorem 4.1]{AM}, they proved the equivalence between the lower semi-boundedness of $(\cE^\mu,\cD(\cE)\cap L^2(\mu))$ 
 and the strong continuity of $p^\mu_t$. By the regularity of $(\cE,\cD(\cE))$, the lower semi-boundedness of $(\cE^\mu,\cD(\cE)\cap L^2(\mu))$ follows 
 from that of $(\cE^\mu,\cD(\cE)\cap C_0(E))$.}
 For a non-negative Borel function $f$
\begin{align*}
\lim_{n\to\infty}p^{\mu_n}_tf(x)&=\lim_{n\to\infty}E_x\left(e^{A^{\mu_n}_t}f(X_t)\right)
=E_x\left(e^{A^{\mu}_t}f(X_t)\right)=p^{\mu}_tf(x),
\end{align*} 
and thus $(\cE^{\ast},\cD(\cE^\ast))$ is identified with the closed form $(\widetilde{\cE}^\mu,\cD(\widetilde{\cE}^\mu))$ generated by $\{p^\mu_t\}$.

 Define
\begin{align*}
\cD({\cE}_{\overline{G}_n})&=\{u\in \cD({\cE})\mid u=0\ \text{$m$-a.e. on $X\setminus \overline{G}_n$}\}\\
\cD(\widetilde{\cE}_{ \overline{G}_n}^\mu)&=\{u\in \cD(\widetilde{\cE}^\mu)\mid u=0\ \text{$m$-a.e. on $X\setminus  
\overline{G}_n$}\}.
\end{align*}
We then see from \cite[Theorem 5.5]{AM} that  
\begin{align*}
& \text{(i)}\ \cD(\widetilde{\cE}_{ \overline{G}_n}^\mu)=\cD(\widetilde{\cE}_{ \overline{G}_n}^{\mu_n})
=\cD({\cE}_{ \overline{G}_n}),\ \ \text{(ii)}\ \widetilde{\cE}^\mu(u)={\cE}^\mu(u),\ u\in\cD({\cE}_{\overline{G}_n}).
\end{align*}
and that the closure of $\ds{\cup_n\cD(\widetilde{\cE}_{ \overline{G}_n}^\mu)}$
 with respect to $\widetilde{\cE}^\mu_1=\widetilde{\cE}^\mu+(\ ,\ )_m$
is equal to $\cD(\widetilde{\cE}^\mu)$. Therefore, noting that $\cD(\cE)\cap C_0(E)\subset \cup_n\cD(\widetilde{\cE}_{ \overline{G}_n})$, we can conclude that $(\cE^\mu,\cD(\cE)\cap C_0(E))$ is closable and its  closure $\cD(\cE^\mu)$ is identified with $\cD(\widetilde{\cE}^\mu)$. 
\end{proof}

The resolvent $G^\mu_\alpha$  is also probabilistically expressed as
$$
G^\mu_\alpha f(x)=R^\mu_\alpha f(x)=E_x\left(\int_0^\infty e^{-\alpha t}e^{A^\mu_t}f(X_t)dt\right)\ \ m\text{-a.e.}
$$

Let $D$ be an open set. If $\mu$ belongs to  $\cK_{\loc}(X^D)$, the local Kato class 
associated with the part process of $X$ on $D$, Theorem \ref{Id} says that 
the form $(\cE^\mu, \cD(\cE)\cap C_0(D))$ 
 is closable on $L^2(D;m)$. We denote $(\cE^\mu, \cD(\cE^{D,\mu}))$ its closure.  
Theorem \ref{Id} can be extended as follows: 

\begin{thm} \la{cl}
 Let $K\subset E$ be a compact set with {\rm Cap}$(K)=0$. Put $D=E\setminus K$.
 If $\mu$ belongs to the local Kato class $\cK_{\loc}(X^D)$, then 
 the form $(\cE^\mu, \cD(\cE)\cap C_0(E))$ is closable on $L^2(E;m)$ and 
 its closure equals $\cD(\cE^{D,\mu})$.
 \end{thm}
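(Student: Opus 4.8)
The plan is to reduce everything to Theorem~\ref{Id} applied to the part process $X^D$ of $X$ on $D$, and then to enlarge the domain from $\cD(\cE)\cap C_0(D)$ to $\cD(\cE)\cap C_0(E)$ by exploiting that $K$ is polar. Throughout I keep the standing hypothesis that $(\cE^\mu,\cD(\cE)\cap C_0(E))$ is positive semi-definite; in particular $\mu$, being a smooth Radon measure, is locally finite on all of $E$.

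First, since $\mathrm{Cap}(K)=0$ the compact set $K$ is $\cE$-polar (and $m(K)=0$), so $\sigma_K=\infty$ $P_x$-a.s.\ for q.e.\ $x$. Hence $X^D$ coincides with $X$ (it is $X$ killed at an a.s.\ infinite hitting time), the Dirichlet form of $X^D$ on $L^2(D;m)=L^2(E;m)$ is again $(\cE,\cD(\cE))$, the PCAF $A^\mu$ is the same for $X$ and for $X^D$, and for a measure supported in a compact subset of $D$ the $\alpha$-potentials relative to $X$ and to $X^D$ agree, so the Kato classes of $X$ and $X^D$ coincide on such measures. Consequently the proof of Theorem~\ref{Id} applies verbatim once the exhausting sequence is chosen with $G_n\uparrow D$ and $\overline{G}_n\subset D$ compact (so that $\mu_n:=1_{\overline{G}_n}\mu\in\cK(X^D)$ by hypothesis): it yields that $(\cE^\mu,\cD(\cE)\cap C_0(D))$ is closable on $L^2(E;m)$, that its closure is $(\cE^\mu,\cD(\cE^{D,\mu}))$, and that the associated semigroup is $p^\mu_tf(x)=E_x(e^{A^\mu_t}f(X_t))$. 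In particular $\cD(\cE)\cap C_0(D)$ is $\cE^{D,\mu}_1$-dense in $\cD(\cE^{D,\mu})$.

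The heart of the argument is to show that every $u\in\cD(\cE)\cap C_0(E)$ already lies in $\cD(\cE^{D,\mu})$, with $\cE^{D,\mu}(u)=\cE(u)-\int_Eu^2\,d\mu=\cE^\mu(u)$. Using $\mathrm{Cap}(K)=0$ and the regularity of $(\cE,\cD(\cE))$, choose $e_n\in\cD(\cE)\cap C_0(E)$ with $0\le e_n\le1$, $e_n\equiv1$ on a neighbourhood of $K$, $\supp e_n$ inside a fixed compact set $C$, and $\cE_1(e_n)\to0$; passing to a subsequence, $e_n\to0$ q.e. Put $u_n:=u(1-e_n)$, so that $u_n\in\cD(\cE)\cap C_0(D)$ ($\cD(\cE)\cap L^\infty$ is an algebra, $u_n$ is continuous with compact support, and it vanishes near $K$). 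The standard Leibniz-type estimates for energy measures (\cite[Section 3.2]{FOT}) together with $e_n\to0$ q.e.\ and $\cE(e_n)\to0$ show that $\{u_n\}$ is $\cE_1$-Cauchy with $u_n\to u$ in $L^2(E;m)$; and since $\mu(C\cup\supp u)<\infty$, the bound $u_n^2\le\|u\|_\infty^2\,1_{\supp u}$, the convergence $u_n\to u$ $\mu$-a.e., and dominated convergence give $\int_E(u_n-u_m)^2\,d\mu\to0$ as well as $\int_Eu_n^2\,d\mu\to\int_Eu^2\,d\mu$. Hence $\{u_n\}$ is $\cE^{D,\mu}$-Cauchy and converges to $u$ in $L^2$, so $u\in\cD(\cE^{D,\mu})$ and $\cE^{D,\mu}(u)=\lim_n\bigl(\cE(u_n)-\int_Eu_n^2\,d\mu\bigr)=\cE(u)-\int_Eu^2\,d\mu$.

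Granting this, $(\cE^\mu,\cD(\cE)\cap C_0(E))$ is the restriction of the closed form $(\cE^{D,\mu},\cD(\cE^{D,\mu}))$ to $\cD(\cE)\cap C_0(E)$, hence closable; and since $\cD(\cE)\cap C_0(D)\subset\cD(\cE)\cap C_0(E)\subset\cD(\cE^{D,\mu})$ with $\cD(\cE)\cap C_0(D)$ already $\cE^{D,\mu}_1$-dense in $\cD(\cE^{D,\mu})$, the closure of $(\cE^\mu,\cD(\cE)\cap C_0(E))$ equals $\cD(\cE^{D,\mu})$. The main obstacle is the third paragraph: it does not suffice that $\cD(\cE)\cap C_0(D)$ be $\cE_1$-dense in $\cD(\cE)$ (which is merely the regularity of the part form on the open set $D$, \cite[Section 4.4]{FOT}), because the approximation must take place in the norm $\cE^{D,\mu}_1$; one must therefore also control $\int(u-u_n)^2\,d\mu$ in a neighbourhood of $K$, and this is exactly where the local finiteness of $\mu$ on all of $E$ and the choice of $u_n$ with $u-u_n$ supported near $K$ enter. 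Away from $K$ the computation is the monotone-convergence-of-forms argument already carried out for Theorem~\ref{Id}.
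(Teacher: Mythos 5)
Your argument is correct and reaches the same conclusion, but the key analytic step is genuinely different from the paper's. Both proofs start identically: invoke Theorem \ref{Id} for the part process $X^D$ (whose Dirichlet form is $(\cE,\cD(\cE))$ itself since ${\rm Cap}(K)=0$) to get the closed form $(\cE^\mu,\cD(\cE^{D,\mu}))$, and both then approximate a given $u\in\cD(\cE)\cap C_0(E)$ by $u(1-e_n)$ with cutoffs $e_n\in\cD(\cE)\cap C_0(E)$ that are $1$ near $K$ and have $\cE_1(e_n)\to0$ (the paper builds these rather carefully, via part-form capacities ${\rm Cap}^{G_n}$ on a nested family of neighbourhoods of $K$, which automatically yields compactly supported cutoffs; your weaker version suffices, and your extra requirement $\supp e_n\subset C$ is neither needed -- since $u-u_n=ue_n$ is supported in $\supp u$ -- nor immediate from ${\rm Cap}(K)=0$ alone). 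The divergence is in how convergence is obtained. The paper only proves $\sup_n\cE_1(u(1-e_n))<\infty$ using the crude product bound $\cE(fg)^{1/2}\le\|f\|_\infty\cE(g)^{1/2}+\|g\|_\infty\cE(f)^{1/2}$, and then passes to Ces\`aro means of a subsequence (Banach--Saks) converging in $\cE_1$, transferring this to $\cE^\mu$-convergence via $0\le\cE^\mu\le\cE$ on $\cD(\cE)\cap C_0(E)$ (so the standing positive semi-definiteness on $C_0(E)$, which you also keep, is used there too). You instead claim genuine strong convergence $\cE_1(ue_n)\to0$. That claim is true, but be aware it does not follow from the Leibniz-type bound alone (which is exactly why the paper settles for boundedness plus Ces\`aro means): you need the refined, measure-level estimate, e.g.\ via the Beurling--Deny decomposition,
$$
\cE(ue_n)\ \le\ C\Big(\|u\|_\infty^2\,\cE(e_n)+\int_E \tilde e_n^{\,2}\,d\mu_{\langle u\rangle}\Big),
$$
with the jump and killing contributions absorbed into the (finite, smooth) energy measure $\mu_{\langle u\rangle}$ of $u$; only then do $e_n\to0$ q.e.\ (hence $\mu_{\langle u\rangle}$-a.e., since $\mu_{\langle u\rangle}$ charges no polar sets) and dominated convergence give the conclusion. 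You do name the right ingredients ($e_n\to0$ q.e., $\cE(e_n)\to0$, FOT Section 3.2), so I regard this as a compressed but sound step rather than a gap. Your route buys a cleaner conclusion -- strong $\cE^\mu_1$-convergence of $u_n$ to $u$ and the explicit identity $\cE^{D,\mu}(u)=\cE(u)-\int_Eu^2\,d\mu$ on the core, with the $\mu$-term handled separately by dominated convergence using local finiteness of $\mu$ on $E$ and the smoothness of $\mu$ -- while the paper's route is more economical in tools, needing only the crude product bound, Banach--Saks, and the positivity of $\cE^\mu$. The final squeezing argument ($\cD(\cE)\cap C_0(D)\subset\cD(\cE)\cap C_0(E)\subset\cD(\cE^{D,\mu})$, with the smaller core already dense) is the same in both proofs.
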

 
 \begin{proof}
First note that since $m(K)=0$,  the form $(\cE^\mu, \cD(\cE)\cap C_0(D))$ 
 is closable on $L^2(E;m)$ and its closure equals $\cD(\cE^{D,\mu})$.  
 
Take a relatively compact open set $G_1\supset K$ and let $(\cE^{G_1},\cD(\cE^{G_1}))$ be the part Dirichlet form 
of $(\cE,\cD(\cE))$ on 
$G_1$. Then $\text{Cap$^{G_1}(K)=0$}$ by \cite[Theorem 4.4.3 (ii)]{FOT}, 
where $\text{Cap$^{G_1}$}$ is the capacity defined 
by $(\cE^{G_1},\cD(\cE^{G_1}))$. Hence, there exists an open set $G_1'$ such that $K\subset G_1'\subset G_1$ and 
$\text{Cap$^{G_1}(G_1')<1$}$. 

Next there exists a relatively compact set $G_2$ such that
 $K\subset G_2\subset \overline{G}_2\subset G_1'$ and the distance between $K$ and 
 $G_2^c$ is less than 1/2,
 $$
 d(K, G_2^c)=\inf\{d(x,y)\mid x\in K,\ y\in G_2^c\}<1/2.
 $$ 
 Since $\text{Cap$^{G_2}(K)=0$}$, there exists an open set $G_2'$ such that $K\subset G_2'\subset G_2$ and 
$\text{Cap$^{G_2}(G_2')<1/2$}$. By repeating this procedure, we have the following sequences of 
open sets $\{G_n\},\ \{G'_n\}$ such that  

\smallskip
i) 
$
G_1\supset G_1'\supset \overline{G}_2\supset G_2\supset G_2'\supset \cdots \supset 
G_n\supset G_n'\supset \overline{G}_{n+1}\supset \cdots\supset K,
$

ii) $\text{Cap$^{G_n}(G_n')<1/n$}$, 

iii) $d(K,G_n^c)<1/n$.

\smallskip
\noindent
Therefore, there exists a sequence $\{\varphi_n\}$ such that $\varphi_n\in \cD(\cE)\cap C_0(G_n)$, 
$0\le \varphi_n\le 1$, $\varphi_n(x)=1$ for $x\in \overline{G}_{n+1}$ and $\cE_1(\varphi_n)<2/n$.
For any $\psi\in\cD(\cE)\cap C_0(E)$ define $\psi_n=\psi-\psi\varphi_n$. Then $\psi_n\in \cD(\cE)\cap C_0(D)$
and $\lim_{n\to\infty}\varphi_n(x)=0$ for any $x\in D$, in particular, $\lim_{n\to\infty}\psi_n(x)=\psi$ $m$-a.e.
Furthermore, since 
$$
\sup_n\cE(\psi_n)^{1/2}\le\sup_n\left(\cE(\psi)^{1/2}+\|\psi\|_\infty\cE(\varphi_n)^{1/2}
+\|\varphi_n\|_\infty\cE(\psi)^{1/2}\right)<\infty,
$$
there exists a subsequence of $\{\psi_n\}$ whose Cesaro mean converges to $\psi$ with respect to $\cE^\mu$.
 Hence the closure $\cD(\cE^{D,\mu})$ of $\cD(\cE)\cap C_0(D)$ contains  $\cD(\cE)\cap C_0(E)$, and thus 
 the form $(\cE^\mu, \cD(\cE)\cap C_0(E))$ is closable and the closure $\cD(\cE^\mu)$ equals $\cD(\cE^{D,\mu})$. 
 \end{proof}
 
  \section{Criticality and Hardy-type inequality}
 For a positive measure $\mu\in\cK_{\loc}$ and a Borel set $B$ of $E$, we 
 denote by $\mu_B$ the restriction of $\mu$ to 
 $B$, $\mu_B(\cdot)=\mu(B\cap\cdot)$.
 For $\mu\in\cK_{\loc}$ define $\nu$ and $\nu_B$ by
 $$
 \nu=\frac{\mu}{R\mu},\quad \nu_B=\frac{\mu_B}{R\mu_B}.
 $$
 For a compact set $K$, the measure $\mu_K$ is in $\cK_{\infty}$ and so
 $R\mu_K$ is bounded continuous (\cite[Proposition 2.2]{C}). Moreover,  
 $\mu_K$ is of finite (0-order) energy integral because 
 \begin{align}\la{domi}
\int_E |\psi|d\mu_K&\le \left(\int_E\psi^2d\mu_K\right)^{1/2}\mu(K)^{1/2} \\
&\le \|R\mu_K\|_\infty^{1/2}\mu(K)^{1/2}\cE(\psi)^{1/2},\ \ \psi\in\cD(\cE)\cap C_0(E)\nonumber
\end{align}
and so $R\mu_K$ belongs to $\cD_e(\cE)$. 
Since   
 \begin{align*}
 \cE^{\nu_K}(R\mu_K,\varphi)&=\cE(R\mu_K,\varphi)-\int_ER\mu_K\cdot\varphi d\nu_K\\
&=\int_E\varphi d\mu_K-\int_E\varphi d\mu_K=0, \ \ \varphi\in\cD(\cE)\cap C_0(E),
 \end{align*}
$R\mu_K$ is a generalized eigenfunction corresponding to the generalized eigenvalue $0$.
 Noting that $\varphi/R\mu_K$ belongs to $\cD(\cE)\cap C_0(E)$ by the same 
 argument as in cite[Lemma 2.4]{T4}, we see from \cite[Theorem 10.2]{FL} that 
 the \Sch\ form $(\cE^{\nu_K},\cD(\cE^{\nu_K}))$ is positive semi-definite
  because
 $$
 \cE^{\nu_K}(\varphi)= \cE^{\nu_K}(R\mu_K(\varphi/R\mu_K))
 =\iint_{E\times E}(R\mu_K)^2d\mu_{\<\varphi/R\mu_K\>}\ge 0.
 $$
  Consequently, $(\cE^\nu,\cD(\cE)\cap C_0(E))$ is 
 also positive semi-definite. In fact, for any $\varphi\in\cD(\cE)\cap C_0(E)$, 
  take a compact set $K$ such that supp$[\varphi]\subset K$
 \begin{align*}
 \cE^{\nu}(\varphi)&=\cE(\varphi)-\int_E\varphi^2\frac{d\mu}{R\mu}
 =\cE(\varphi)-\int_E\varphi^2\frac{d\mu_K}{R\mu}\\
 &\ge\cE(\varphi)-\int_E\varphi^2\frac{d\mu_K}{R\mu_K}=\cE^{\nu_K}(\varphi)\ge 0.
 \end{align*}
 
 Since $R\mu=\lim_{n\to\infty}R\mu_{K_n}$ 
 for a sequence $\{K_n\}$ of compact sets increasing to $E$, $R\mu$ is lower semi-continuous. Since for a 
 non-trivial smooth measure $\mu$,  
  $R\mu(x)>0$ by the irreducibility and so for a compact set $K$, $\inf_{x\in K}R\mu(x)>0$. 
 Hence if $R\mu$ is locally bounded, then  
  $\nu=\mu/R\mu$ is also in $\cK_{\loc}$. 
  Hence we see that for a non-trivial $\mu\in\cK_{\loc}$, 
  $(\cE^\nu,\cD(\cE)\cap C_0(E))$ is closable. We 
   denote $(\cE^\nu,\cD(\cE^\nu))$ its closure.

 \begin{lem} \la{ke} For a compact $K$ and $\mu\in\cK_{\loc}$
$$
\cE^\nu(R\mu_K)\le\iint_{K\times K^c}R(x,y)d\mu(x)d\mu(y).
 $$
 \end{lem}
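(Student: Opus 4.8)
The plan is to evaluate $\cE^\nu(R\mu_K)$ by the explicit formula $\cE^\nu(R\mu_K)=\cE(R\mu_K)-\int_E(R\mu_K)^2\,d\nu$ and to bound the potential term $\int_E(R\mu_K)^2\,d\nu$ from below. Since $\mu_K$ has finite $0$-order energy (as established just above the lemma, using that $\mu_K\in\cK_\infty$, so $R\mu_K$ is bounded and $\mu(K)<\infty$), the function $R\mu_K$ is the $0$-potential of $\mu_K$ in $\cD_e(\cE)$ and hence
\[
\cE(R\mu_K)=\cE(R\mu_K,R\mu_K)=\int_E R\mu_K\,d\mu_K=\iint_{K\times K}R(x,y)\,d\mu(x)\,d\mu(y)<\infty .
\]
Thus the lemma reduces to showing $\int_E(R\mu_K)^2\,d\nu\ge \iint_{K\times K}R\,d\mu\,d\mu-\iint_{K\times K^c}R\,d\mu\,d\mu$.

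First I would justify that the abstract extended‑form value $\cE^\nu(R\mu_K)$ coincides with the naive expression. Since $0\le\cE^\nu(u)\le\cE(u)$ for $u\in\cD(\cE)$, every $\cE$-approximating sequence of $R\mu_K\in\cD_e(\cE)$ is also $\cE^\nu$-Cauchy, so $R\mu_K\in\cD_e(\cE^\nu)$. Choosing an approximating sequence $\{\varphi_n\}\subset\cD(\cE)\cap C_0(E)$ (available by the regularity of $(\cE,\cD(\cE))$), the bound $\int_E(\varphi_n-\varphi_m)^2\,d\nu\le\cE(\varphi_n-\varphi_m)\to0$ makes $\{\varphi_n\}$ Cauchy in $L^2(E;\nu)$; since a subsequence converges quasi-everywhere to $\widetilde{R\mu_K}$ and $\nu$ charges no set of zero capacity, that $L^2(\nu)$-limit is $\widetilde{R\mu_K}$, whence $\int_E\varphi_n^2\,d\nu\to\int_E(R\mu_K)^2\,d\nu$ and therefore $\cE^\nu(R\mu_K)=\lim_n\bigl(\cE(\varphi_n)-\int_E\varphi_n^2\,d\nu\bigr)=\cE(R\mu_K)-\int_E(R\mu_K)^2\,d\nu$.

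The core of the argument is then a pointwise inequality. Writing $\mu=\mu_K+\mu_{K^c}$, linearity of the resolvent gives $R\mu=R\mu_K+R\mu_{K^c}$, so $0\le R\mu_K\le R\mu$ and (where $R\mu>0$, which we may assume since otherwise $\mu(K)=0$ and the statement is trivial)
\[
\frac{(R\mu_K)^2}{R\mu}=R\mu_K-R\mu_K\,\frac{R\mu_{K^c}}{R\mu}\ \ge\ R\mu_K-R\mu_{K^c}.
\]
Integrating $\nu=\mu/R\mu$ over $K$ and using Tonelli,
\[
\int_E(R\mu_K)^2\,d\nu\ \ge\ \int_K\frac{(R\mu_K)^2}{R\mu}\,d\mu\ \ge\ \int_K R\mu_K\,d\mu-\int_K R\mu_{K^c}\,d\mu=\iint_{K\times K}R\,d\mu\,d\mu-\iint_{K\times K^c}R(x,y)\,d\mu(x)\,d\mu(y).
\]
If $\iint_{K\times K^c}R\,d\mu\,d\mu=\infty$ the lemma is vacuous; otherwise all three integrals on the right are finite, so the splitting is legitimate. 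Substituting into the formula for $\cE^\nu(R\mu_K)$ yields $\cE^\nu(R\mu_K)=\iint_{K\times K}R\,d\mu\,d\mu-\int_E(R\mu_K)^2\,d\nu\le\iint_{K\times K^c}R(x,y)\,d\mu(x)\,d\mu(y)$, as claimed.

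The only genuinely delicate point is the second paragraph: identifying the extended‑Schr\"odinger‑form value $\cE^\nu(R\mu_K)$ with $\cE(R\mu_K)-\int_E(R\mu_K)^2\,d\nu$ requires care because $\nu$ need not be absolutely continuous with respect to $m$, and this is exactly what the passage to the quasi-continuous version together with the fact that $\nu$ charges no polar set takes care of. Everything else is the elementary inequality $a^2/(a+b)\ge a-b$ for $a,b\ge0$ and Tonelli's theorem.
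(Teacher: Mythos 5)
Your proposal is correct and follows essentially the same route as the paper: both start from $\cE^\nu(R\mu_K)=\int_E R\mu_K\,d\mu_K-\int_E (R\mu_K)^2/R\mu\,d\mu$ and reduce to the elementary bound $R\mu_K R\mu_{K^c}/(R\mu_K+R\mu_{K^c})\le R\mu_{K^c}$, your version merely discarding the nonnegative $K^c$-part of the potential integral where the paper instead cancels it against $\int_E R\mu_K\,d\mu_{K^c}$. The careful identification of $\cE^\nu(R\mu_K)$ with the naive expression via an approximating sequence in $\cD(\cE)\cap C_0(E)$ is a point the paper leaves implicit, and your treatment of it is sound.
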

 
 \begin{proof}
 \begin{align*}
 \cE^\nu(R\mu_K)&=\int_E R\mu_Kd\mu_K-\int_E \frac{(R\mu_K)^2 }{R\mu}d\mu\\
 &=\int_E\left(\frac{R\mu_K(R\mu_K+R\mu_{K^c})-(R\mu_K)^2}{R\mu_K+R\mu_{K^c}}\right)d\mu-\int_E R\mu_Kd\mu_{K^c}\\
 &=\int_E\frac{R\mu_K R\mu_{K^c}}{R\mu_K+R\mu_{K^c}}d\mu-\int_E R\mu_Kd\mu_{K^c}\\
 &=\int_E\frac{R\mu_K R\mu_{K^c}}{R\mu_K+R\mu_{K^c}}d\mu_K+\int_E\frac{R\mu_K R\mu_{K^c}}{R\mu_K+R\mu_{K^c}}d\mu_{K^c}-\int_E R\mu_Kd\mu_{K^c}.
  \end{align*}
Since 
  $$
 \frac{R\mu_K R\mu_{K^c}}{R\mu_K+R\mu_{K^c}}\le R\mu_K,\ \  \frac{R\mu_K R\mu_{K^c}}{R\mu_K+R\mu_{K^c}}\le R\mu_{K^c},
 $$
the right hand side is less than or equal to 
$$
\int_E R\mu_{K^c}d\mu_K+\int_E R\mu_K d\mu_{K^c}-\int_E R\mu_Kd\mu_{K^c}.
$$
Noting 
$$
\int_E R\mu_{K^c}d\mu_K=\int_E R\mu_K d\mu_{K^c}=\iint_{K\times K^c}R(x,y)d\mu(x)d\mu(y),
$$
we have the lemma.
 \end{proof}

 We define a subclass $\cK_{H}$ of $\cK_{\loc}$ as follows: a measure $\mu\in\cK_{\loc}$
 belongs to $\mu\in\cK_{H}$ if $\mu$ satisfies that  $R\mu$ is in $\cD_{\loc}(\cE)\cap \sB_{b,\loc}(E)$
  and there exists an
 increasing sequence $\{K_n\}$ of compact sets such that $K_n\uparrow E$ and 
 \bequ\la{h}
 \sup_n\iint_{K_n\times K_n^c}R(x,y)d\mu(x)d\mu(y)<\infty.
\eequ
 
 \begin{lem}\la{nex}
 If $\mu\in\cK_H$, then $R\mu$ belongs to $\cE_e(\cE^\nu)$. 
 \end{lem}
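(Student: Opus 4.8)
The plan is to build an $\cE^\nu$-approximating family for $R\mu$ out of the compact exhaustion supplied by the definition of $\cK_H$, to keep the energies bounded by means of Lemma \ref{ke}, and then to push the resulting pointwise-monotone limit into $\cD_e(\cE^\nu)$ using the Fatou property of $(\cE^\nu,\cD(\cE^\nu))$.

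First I would fix the increasing sequence $\{K_n\}$ of compact sets with $K_n\uparrow E$ and $C:=\sup_n\iint_{K_n\times K_n^c}R(x,y)\,d\mu(x)d\mu(y)<\infty$ furnished by $\mu\in\cK_H$. Each $\mu_{K_n}$ lies in $\cK_\infty$, is of finite $0$-order energy integral, and hence $R\mu_{K_n}\in\cD_e(\cE)$, as was recorded just before Lemma \ref{ke}. Since $(\cE^\nu,\cD(\cE)\cap C_0(E))$ is positive semi-definite we have $\cE^\nu(u)\le\cE(u)$ for $u\in\cD(\cE)$, so $\cD_e(\cE)\subset\cD_e(\cE^\nu)$ with $\cE^\nu\le\cE$ on $\cD_e(\cE)$; in particular $R\mu_{K_n}\in\cD_e(\cE^\nu)$. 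By Lemma \ref{ke} and the definition of $\cK_H$,
$$\cE^\nu(R\mu_{K_n})\le\iint_{K_n\times K_n^c}R(x,y)\,d\mu(x)d\mu(y)\le C,\qquad n\ge 1,$$
so $\sup_n\cE^\nu(R\mu_{K_n})<\infty$. Because $R(x,y)\ge 0$ and $K_n\uparrow E$, the functions $R\mu_{K_n}$ increase pointwise to $R\mu$, and $R\mu<\infty$ everywhere since $R\mu\in\sB_{b,\loc}(E)$; in particular $R\mu_{K_n}\to R\mu$ $m$-a.e.

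The remaining step, and the main obstacle, is to deduce $R\mu\in\cD_e(\cE^\nu)$ from this. The sequence $\{R\mu_{K_n}\}$ is $\cE^\nu$-bounded and $m$-a.e. convergent, but in general it is \emph{not} $\cE^\nu$-Cauchy, and since $(\cE^\nu,\cD(\cE^\nu))$ is not assumed subcritical the pair $(\cD_e(\cE^\nu),\cE^\nu)$ need not be a Hilbert space, so the Banach-Alaoglu argument used in Lemma \ref{e-5} is unavailable. Instead I would use that $(\cE^\nu,\cD(\cE^\nu))$ is positive preserving, hence has the Fatou property (\cite[Proposition 2]{Sch1}); through Schmuland's extension of the theory of extended spaces to forms with the Fatou property (\cite{Sch}) the relevant part of \cite[Theorem 1.5.2]{FOT} carries over, namely that an $\cE^\nu$-bounded sequence in $\cD_e(\cE^\nu)$ converging $m$-a.e. has its limit in $\cD_e(\cE^\nu)$ (concretely, a subsequence of the Ces\`aro means of $\{R\mu_{K_n}\}$ is $\cE^\nu$-Cauchy and still converges $m$-a.e. to $R\mu$). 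This gives $R\mu\in\cD_e(\cE^\nu)$, together with the by-product $\cE^\nu(R\mu)\le\liminf_n\cE^\nu(R\mu_{K_n})\le C$. The point to be careful with is exactly this invocation of the Fatou property: it is what replaces the completeness of $\cD_e(\cE^\nu)$ that we cannot assume here. The sharper equality $\cE^\nu(R\mu)=0$, i.e.\ the criticality of $(\cE^\nu,\cD(\cE^\nu))$, is not proved at this stage; it will come from a weak-solution property of $R\mu$ (the identity $\cE^\nu(R\mu,\varphi)=0$ for $\varphi\in\cD(\cE)\cap C_0(E)$, already visible in the computation preceding Lemma \ref{ke}) combined with the approximation constructed above.
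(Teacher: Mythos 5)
Your proposal follows the paper's own proof: the same exhaustion $\{K_n\}$ from the definition of $\cK_H$, the same use of Lemma \ref{ke} together with (\ref{kh}) to obtain $\sup_n\cE^\nu(R\mu_{K_n})<\infty$, and the same passage to the $m$-a.e.\ limit $R\mu$ to conclude $R\mu\in\cD_e(\cE^\nu)$. The paper simply asserts this last step (bounded $\cE^\nu$-energy plus a.e.\ convergence implies membership in the extended Schr\"odinger space), which is precisely the Fatou-property/Schmuland fact you make explicit, so your additional justification is a refinement of, not a departure from, the paper's argument.
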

 
 \begin{proof}
 Let $\{K_n\}$ be a sequence of compact sets for $\mu$ in (\ref{h}).
 Then by Lemma \ref{ke}
 \begin{align*}
 \sup_n\cE^\nu(R\mu_{K_n})<\infty.
 \end{align*}
 Since $\lim_{n\to\infty}R\mu_{K_n}=R\mu$, $R\mu\in\cD_e(\cE^\nu)$. 
 \end{proof}

 \begin{lem}\la{mex} Let $\mu\in\cK_H$. For $\varphi\in\cD(\cE)\cap C_0(E)$
 $$
 \cE^\nu(R\mu,\varphi)=0.
 $$
 \end{lem}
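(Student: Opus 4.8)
The plan is to test the required identity against the natural approximating sequence of $R\mu$ produced by Lemma \ref{nex}, and then pass to the limit. Fix a sequence $\{K_n\}$ of compact sets with $K_n\uparrow E$ realizing the bound (\ref{h}) in the definition of $\cK_H$. Then $R\mu_{K_n}\uparrow R\mu$ pointwise (monotone convergence in the resolvent kernel), each $R\mu_{K_n}$ is bounded continuous, and by Lemma \ref{ke} $\sup_n\cE^\nu(R\mu_{K_n})<\infty$. Since $\cE^\nu$ is positive preserving it has the Fatou property, so, as in Lemma \ref{nex}, $R\mu\in\cD_e(\cE^\nu)$; moreover a standard convex-combination (Banach--Saks / Mazur) argument in the semi-normed space $(\cD_e(\cE^\nu),\sqrt{\cE^\nu})$ turns $\{R\mu_{K_n}\}$, after passing to Ces\`aro means of a subsequence, into an $\cE^\nu$-Cauchy sequence still converging $m$-a.e.\ to $R\mu$, i.e.\ an approximating sequence of $R\mu$. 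Hence, for $\varphi\in\cD(\cE)\cap C_0(E)\subset\cD(\cE^\nu)$, Lemma \ref{fo1} together with the Schwarz inequality for the non-negative form $\cE^\nu$ gives $\cE^\nu(R\mu,\varphi)=\lim_n\cE^\nu(R\mu_{K_n},\varphi)$ (averaging does not change the limit). It therefore suffices to prove $\cE^\nu(R\mu_{K_n},\varphi)\to 0$.

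For the computation: $\mu_{K_n}$ is of finite $0$-order energy integral by the estimate preceding Lemma \ref{ke}, so $R\mu_{K_n}\in\cD_e(\cE)\subset\cD_e(\cE^\nu)$ and $\cE(R\mu_{K_n},\varphi)=\int_E\varphi\,d\mu_{K_n}$, exactly as in the computation of $\cE^{\nu_{K_n}}(R\mu_{K_n},\varphi)=0$ carried out before Lemma \ref{ke}. Because $\nu=\mu/R\mu$ and $R\mu$ is, on the compact set $\supp[\varphi]$, bounded above (it lies in $\sB_{b,\loc}(E)$) and bounded away from $0$ (it is lower semicontinuous and, by irreducibility, strictly positive), every integral below is finite and
\[
\cE^\nu(R\mu_{K_n},\varphi)=\cE(R\mu_{K_n},\varphi)-\int_E R\mu_{K_n}\,\varphi\,d\nu=\int_{K_n}\varphi\,d\mu-\int_E\frac{R\mu_{K_n}}{R\mu}\,\varphi\,d\mu .
\]
Now let $n\to\infty$. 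Since $\mu\in\cK_{\loc}$ and $\supp[\varphi]$ is compact, $\varphi\in L^1(E;\mu)$, and in fact $\int_{K_n}\varphi\,d\mu=\int_E\varphi\,d\mu$ once $K_n\supset\supp[\varphi]$; on the other hand $R\mu_{K_n}/R\mu\uparrow 1$ on $\supp[\varphi]$ with $|(R\mu_{K_n}/R\mu)\varphi|\le|\varphi|\in L^1(E;\mu)$, so $\int_E(R\mu_{K_n}/R\mu)\varphi\,d\mu\to\int_E\varphi\,d\mu$ by dominated convergence. Hence $\cE^\nu(R\mu_{K_n},\varphi)\to 0$, and combining with the first paragraph, $\cE^\nu(R\mu,\varphi)=0$.

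The main obstacle is the passage to the limit inside the bilinear form in the first paragraph: one should not expect $\{R\mu_{K_n}\}$ to be $\cE^\nu$-Cauchy on the nose, so the fact that boundedness of $\{\cE^\nu(R\mu_{K_n})\}$ plus $m$-a.e.\ convergence nevertheless forces $\cE^\nu(R\mu_{K_n},\varphi)\to\cE^\nu(R\mu,\varphi)$ rests essentially on the Fatou property of $\cE^\nu$ (coming from its positive preserving property via \cite[Proposition 2]{Sch1}) and on manufacturing a genuine approximating sequence by convex combinations. Once that is in hand, everything else is the routine identity above and elementary convergence of integrals.
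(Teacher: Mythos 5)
Your argument is correct and essentially the paper's own proof: both exploit $\sup_n\cE^\nu(R\mu_{K_n})<\infty$ from Lemma \ref{ke} together with a Banach--Saks/Ces\`aro-mean step to pass the pairing with $\varphi$ to the limit, and then compute $\cE^\nu(R\mu_{K_n},\varphi)=\int_{K_n}\varphi\,d\mu-\int_E(R\mu_{K_n}/R\mu)\varphi\,d\mu\to 0$. The only cosmetic differences are that the paper averages the measures $1_{K_{n_j}}\mu$ (the same thing by linearity of $R$) and invokes monotone rather than dominated convergence for the second integral.
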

 
 \begin{proof}
  Since $\sup_n\cE^\nu(R\mu_{K_n})<\infty$, There exists a subsequence $\{K_{n_l}\}\subset \{K_n\}$ such that
 $$
 R\left(\frac{({1_{K_{n_1}}+1_{K_{n_2}}\cdots+1_{K_{n_l}}})}{l}\mu\right)\longrightarrow R\mu
 $$
 with $\cE^\nu$-strongly. Let $0\le \phi_l:=(1_{K_{n_1}}+1_{K_{n_2}}\cdots+1_{K_{n_l}})/l\le 1$. Then $\phi\to1$.
 
 For a fixed $\varphi\in\cD(\cE)\cap C_0(E)$ we can assume supp$[\varphi]\subset K_{n_1}$. Then
 \begin{align*}
 \cE^\nu(R\mu,\varphi)&=\lim_{l\to\infty} \cE^\nu(R(\phi_l\mu),\varphi)=\lim_{l\to\infty} \left(\cE(R(\phi_l\mu),\varphi)-
 \int_E \frac{R(\phi_l\mu) }{R\mu}\varphi d\mu\right).
   \end{align*}
Note that $R(\phi_l\mu)$ belongs to $\cD_e(\cE)$. Then since
$$
\lim_{l\to\infty} \cE(R(\phi_l\mu),\varphi)=\lim_{l\to\infty} \int_E \phi_l\varphi d \mu=\int_E\varphi d\mu
$$
and by the monotone convergence theorem 
$$
\lim_{l\to\infty} \int_E \frac{R(\phi_l\mu) }{R\mu}\varphi d\mu=\int_E\varphi d\mu,
$$
we have the lemma.
 \end{proof}

 \begin{thm}\la{h-r}
 If $\mu\in\cK_H$, then $R\mu$ is a ground state of $(\cE^\nu,\cD(\cE^\nu))$, consequently,
 $(\cE^\nu,\cD(\cE^\nu))$ is critical. 
 \end{thm}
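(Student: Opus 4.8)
The plan is to check, for the explicit candidate $\phi=R\mu$, the three requirements in Definition \ref{def-criticality} (2): that $R\mu$ lies in $\cD_e(\cE^\nu)$, that $R\mu>0$ $m$-a.e., and that $\cE^\nu(R\mu)=0$. The first is exactly Lemma \ref{nex}. For the second I would reuse the observation recorded just before Lemma \ref{ke}: for a non-trivial smooth measure $\mu$ the potential $R\mu$ is lower semi-continuous and strictly positive everywhere, by irreducibility of $(\cE,\cD(\cE))$. So the real content is the identity $\cE^\nu(R\mu)=0$, and this is the step I expect to need the most care.

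The idea for $\cE^\nu(R\mu)=0$ is to promote Lemma \ref{mex}, which gives $\cE^\nu(R\mu,\varphi)=0$ for $\varphi\in\cD(\cE)\cap C_0(E)$, to the statement that $\cE^\nu(R\mu,w)=0$ for \emph{every} $w\in\cD_e(\cE^\nu)$, and then to specialize to $w=R\mu$; this last step is legitimate precisely because Lemma \ref{nex} places $R\mu$ in $\cD_e(\cE^\nu)$, even though $R\mu$ need not belong to $\cD_e(\cE)$. To pass from test functions to all of $\cD_e(\cE^\nu)$, I would first show that $\cD(\cE)\cap C_0(E)$ is dense in $\cD_e(\cE^\nu)$ for the seminorm $u\mapsto\cE^\nu(u)^{1/2}$: given $w\in\cD_e(\cE^\nu)$, an approximating sequence $\{w_n\}\subset\cD(\cE^\nu)$ satisfies $\cE^\nu(w-w_n)\to0$ by Lemma \ref{fo1}, and since $(\cE^\nu,\cD(\cE^\nu))$ is by construction the $\cE^\nu_1$-closure of $(\cE^\nu,\cD(\cE)\cap C_0(E))$, a diagonal choice produces $v_n\in\cD(\cE)\cap C_0(E)$ with $\cE^\nu(w_n-v_n)\to0$, whence $\cE^\nu(w-v_n)^{1/2}\le\cE^\nu(w-w_n)^{1/2}+\cE^\nu(w_n-v_n)^{1/2}\to0$. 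Combining this with the Schwarz inequality $|\cE^\nu(R\mu,v)|\le\cE^\nu(R\mu)^{1/2}\cE^\nu(v)^{1/2}$ — valid on $\cD(\cE^\nu)$ and extending to $\cD_e(\cE^\nu)$ by passing to limits along approximating sequences, the Fatou property of $(\cE^\nu,\cD(\cE^\nu))$ ensuring the $\cE^\nu$-seminorm and its polarization behave well under these limits — and with Lemma \ref{mex}, one gets $\cE^\nu(R\mu,w)=\lim_n\cE^\nu(R\mu,v_n)=0$. Taking $w=R\mu$ gives $\cE^\nu(R\mu)=0$.

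Once $R\mu$ is known to lie in $\cD_e(\cE^\nu)$, to be strictly positive $m$-a.e., and to satisfy $\cE^\nu(R\mu)=0$, Definition \ref{def-criticality} (2) immediately yields that $R\mu$ is a ground state of $(\cE^\nu,\cD(\cE^\nu))$ and hence that $(\cE^\nu,\cD(\cE^\nu))$ is critical. The one genuinely delicate point is the density/approximation step: one must be careful to keep the approximants inside $\cD(\cE)\cap C_0(E)$ rather than merely inside $\cD_e(\cE)$ — where $R\mu$ in general does not live — which is why the argument is run at the level of $\cD(\cE^\nu)$ and its $\cE^\nu_1$-closure, and one must verify that polarization and the Schwarz inequality for $\cE^\nu$ survive the passage to the extended Schr\"odinger space.
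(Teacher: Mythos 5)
Your proposal is correct and follows essentially the same route as the paper: it invokes Lemma \ref{nex} for $R\mu\in\cD_e(\cE^\nu)$ and then obtains $\cE^\nu(R\mu)=0$ by approximating $R\mu$ in the $\cE^\nu$-seminorm with functions from $\cD(\cE)\cap C_0(E)$ and applying Lemma \ref{mex} together with the Schwarz inequality. Your extra care about strict positivity of $R\mu$ and about the validity of polarization on the extended Schr\"odinger space only makes explicit what the paper's proof leaves implicit.
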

 
 \begin{proof}
Since $R\mu$ belongs to $\cE_e(\cE^\nu)$, there exists a sequence $\{\varphi_n\}\subset \cD(\cE)\cap C_0(E)$
such that $\varphi_n$ converges to $R\mu$ $E^\nu$-strongly. Hence
$$
\cE^\nu(R\mu)=\lim_{n\to\infty}\cE^\nu(R\mu,\varphi_n)=0.
$$
 \end{proof}
 
\begin{ex}
Let $X$ be a one-dimensional diffusion process on $(0,\infty)$ with scale function $S$ such that 
$\lim_{x\to 0}S(x)=0,\ \lim_{x\to \infty}S(x)=\infty$. Then 0-resolvent kernel is given by 
 $$
 R(x,y)=S(x)\wedge S(y).
 $$
 Let $\mu\in \cK_{\loc}$ satisfying
 \bequ\la{one}
 \sup_{r>0}\left(\mu((r,\infty))\int_0^rSd\mu \right)<\infty.
 \eequ
 We then have a optimal Hardy inequality
\bequ\la{o-h}
\int_0^\infty u^2\frac{1}{R\mu}d\mu
\le \int_0^\infty\left(\frac{du}{dS}\right)^2dS.
\eequ
If $S(x)=x^p, \ p>0$, then $\mu(dx)=x^{-(p+2)/2}dx$ satisfies (\ref{one}) and the equation (\ref{o-h}) gives us the inequality 
\bequ \la{wh} 
\int_0^\infty\left(x^{-(p+1)/2}\cdot u\right)^2dx\le \frac{4}{p^2}\int_0^\infty\left(x^{-(p-1)/2}\cdot \frac{du}{dx}\right)^2dx,
\eequ
which is the classical one when $p=1$.
  \end{ex}

 \begin{ex}\la{ex5}
 Let us consider the symmetric $\alpha$-stable process $X^{(\alpha)}$
on $\mathbb{R}^d$ with $0 < \alpha < 2$. We assume that
$\alpha < d$, that is, $X^{(\alpha)}$ is transient. 
Let $(\cE^{(\alpha)},\cD(\cE^{(\alpha)}))$ be the Dirichlet form generated by
$X^{(\alpha)}$:
\bequ\la{alpha}
\left\{
\begin{split}
& \cE^{(\alpha)} (u) =
\frac{1}{2} \mathcal{A} (d,\alpha)
\int \!\!\! \int_{\mathbb{R}^d \times \mathbb{R}^d \setminus \triangle }
\frac{(u(x) - u(y))^2}{|x - y|^{d + \alpha}} dx dy \\
& \cD(\cE^{(\alpha)}) =
\left\{ u \in L^2 (\mathbb{R}^d) \mid
\int \!\!\! \int_{\mathbb{R}^d \times \mathbb{R}^d \setminus \triangle }
\frac{(u(x) - u(y))^2}{|x - y|^{d + \alpha}} dx dy < \infty \right\},
\end{split}
\right.
\eequ
where $\triangle =\{(x,x)\mid x\in {\mathbb R}^d\}$ and 
\beqn
\mathcal{A}(d, \alpha) = \frac{\alpha 2^{d-1}\Gamma (\frac{\alpha+d}{2})}
{\pi^{d/2} \Gamma (1 - \frac{\alpha}{2})}
\eeqn
(\cite[Example 1.4.1]{FOT}).

 Let $R(x,y)$ be the $0$-resolvent density of the rotational symmetric $\alpha$-stable process $X$
  on ${\mathbb R}^d$, that is, 
  $$
 R(x,y)=\frac{\Gamma((d-\alpha)/2)}{2^\alpha\pi^{d/2}\Gamma(\alpha/2)}\cdot\frac{1}{|x-y|^{d-\alpha}}.
 $$
We see from Hardy-Littlewood-Sobolev inequality (\cite[Theorem 4.3]{LL}) that for $p, q>1$, $1/p+1/q=(d+\alpha)/d$.
 \begin{align*}
 \iint_{ {\mathbb R}^d\times  {\mathbb R}^d}\frac{1_{B(r)}(x)1_{B(r)^c}(y)}{|x-y|^{d-\alpha}|x|^p|y|^p}dxdy
 &\le C\cdot\|1_{B(r)}|x|^{-\beta}\|_p\|1_{B(r)^c}|x|^{-\beta}\|_q\\
 &=C\cdot r^{d/p-\beta}r^{d/q-\beta}\\
 &=C\cdot r^{d(1/p+1/q)-2\beta},
 \end{align*}
where $B(r)=\{x\in {\mathbb R}^d\mid |x|\le r\}$. 
 Hence if 
 $$
 d\left(\frac{1}{p}+\frac{1}{q}\right)-2\beta=0\ \Longleftrightarrow\ \beta=\frac{d+\alpha}{2},
 $$
 then 
 $$
M:= \sup_{r>0}\iint_{ {\mathbb R}^d\times  {\mathbb R}^d}\frac{1_{B(r)}(x)1_{B(r)^c}(y)}{|x-y|^{d-\alpha}|x|^p|y|^p}dxdy
 <\infty. 
 $$
 
 Let $X^D$ be the part process of $X$ on $D={\mathbb R}^d\setminus\{0\}$. Then the measure $\mu(dx)=|x|^{-(d+\alpha)/2}dx$ belongs to $\cK_H(X^D)$. Indeed, 
 $\mu$ is in $\cK_{\loc}(X^D)$. for $T_n=\{x\in{\mathbb R}^d\mid 1/n\le
 |x|\le n\}$
 \begin{align*}
&\sup_n\iint_{ {\mathbb R}^d\times  {\mathbb R}^d}\frac{1_{T_n}(x)1_{T_n^c}(y)}{|x-y|^{d-\alpha}|x|^{(d+\alpha)/2}|y|^{(d+\alpha)/2}}dxdy\\
\le&\, \sup_n\iint_{ {\mathbb R}^d\times  {\mathbb R}^d}\frac{1_{B(n)}(x)1_{B(n)^c}(y)+1_{B(1/n)}(x)1_{B(1/n)^c}(y)}{|x-y|^{d-\alpha}|x|^{(d+\alpha)/2}|y|^{(d+\alpha)/2}}dxdy\\
\le &\,2M<\infty.
 \end{align*}

 The capacity of $\{0\}$ is zero with respect to the $\alpha$-stable process $X$ and thus 
 the closure of $\cD(\cE^{(\alpha)})\cap C_0(D)$ and that of $\cD(\cE^{(\alpha)})\cap C_0({\mathbb R}^d)$
 is equal. Note that   
 $$ 
 R\mu(x)=\frac{\Gamma((d-\alpha)/4)^2}{2^\alpha\Gamma((d+\alpha)/4)^2}\cdot |x|^{-(d-\alpha)/2}. 
 $$
and so
 $$
 \frac{\mu(dx)}{R\mu(x)}=\kappa^*\frac{1}{|x|^{\alpha}},\ \ 
 \kappa^*=\frac{2^\alpha\Gamma((d+\alpha)/4)^2}{\Gamma((d-\alpha)/4)^2}.
 $$
Then, by Theorem  \ref{h-r}, 
  $R\mu$ 
  is a ground state of 
 $$
\cE^\nu(u)= \cE^{(\alpha)}(u)-\kappa^*\int_{{\mathbb R}^d}u^2/|x|^\alpha dx
 $$
 and  $(\cE^\nu,\cD(\cE^\nu))$ is critical. We see that the constant $\kappa^*$
  equals the best constant of Hardy inequality. 
  \end{ex}
 
\medskip
Let $\mu\in\cK_H$. By Revuz correspondence, $R\mu(x)$ is written as $E_x(A^\mu_\zeta)$ and 
\begin{align*}
R\mu(X_t)&=E_{X_t}(A^\mu_\zeta)=E_x(A^\mu_\zeta(\theta_t)1_{\{t<\zeta\}}|\sF_t)\\
&=E_x((A^\mu_\zeta-A^\mu_t)1_{\{t<\zeta\}}|\sF_t)\\
&=E_x(A^\mu_\zeta|\sF_t)-A^\mu_t, \ \ t<\zeta\ \ P_x\text{-a.s.}
\end{align*}
Define a multiplicative functional $L_t$ by
$$
L_t=\frac{R\mu(X_t)}{R\mu(X_0)}\exp\left(\int_0^t\frac{dA^\mu_s}{R\mu(X_s)}\right).
$$ 
Put
$$
M_t=E_x(A^\mu_\zeta|\sF_t), \ V_t=\exp\left(\int_0^t\frac{dA^\mu_s}{R\mu(X_s)}\right).
$$
Then $R\mu(X_t)=M_t-A^\mu_t$, $t<\zeta$ and by It\^o formula
\begin{align*}
L_t&=1+\frac{1}{R\mu(X_0)}\left(\int_0^tV_sdR\mu(X_s)+\int_0^tR\mu(X_s)V_s\frac{dA^\mu_s}{R\mu(X_s)}\right)\\
&=1+\int_0^tV_sdM_s, \ \ t<\zeta.
\end{align*}
Suppose the Hunt process $X$ has no killing inside. Then the life time $\zeta$ is predictable and 
$\int_0^tV_sdM_s$ becomes a local martingale. Hence
$$
E_x(L_t)=\frac{1}{R\mu(x)}E_x\left(\exp\left(\int_0^t\frac{dA^\mu_s}{R\mu(X_s)}\right)R\mu(X_t)\right)\le 1.
$$
In other words, $R\mu$ is $p^\nu_t$-excessive, $p^\nu_tR\mu\le R\mu$. Hence we have the next corollary.

\begin{cor}
For $\mu\in\cK_H$ let $\nu=\mu/R\mu$ and define 
\begin{equation}
\left\{
\begin{split}
& \cE^{\nu,R\mu} (u,u)  = \cE^{\nu} (R\mu\cdot u, R\mu\cdot u)  \\
& \cD(\cE^{\nu,R\mu}) = \{ u \in L^2(E;(R\mu)^2m)\mid R\mu\cdot u\in \cD(\cE^\nu)\}.
\end{split} 
\right.
\end{equation}
 Then $(\cE^{\nu,R\mu},\cD(\cE^{\nu,R\mu}))$ is a recurrent Dirichlet form.
 \end{cor}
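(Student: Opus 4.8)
The plan is to read the Corollary as the $h$-transform incarnation of the criticality of $(\cE^\nu,\cD(\cE^\nu))$ established in Theorem \ref{h-r}: once we know that $R\mu$ is a ground state lying in $\cH^+(\nu)$, recurrence of the transformed form is immediate from Definition \ref{def-transient}(2), exactly as in the proof of Theorem \ref{tai}(ii).

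First I would check that the machinery of Section 2 applies to the smooth measure $\nu=\mu/R\mu$. Since $\mu\in\cK_H\subset\cK_{\loc}$ and $R\mu\in\sB_{b,\loc}(E)$ is locally bounded, $\nu$ belongs to $\cK_{\loc}$, and as recorded in Section 5 the form $(\cE^\nu,\cD(\cE)\cap C_0(E))$ is positive semi-definite and closable, with closure $(\cE^\nu,\cD(\cE^\nu))$; hence $\cD_e(\cE^\nu)$, the excessive cone $\cH^+(\nu)$, and the $h$-transform $(\cE^{\nu,h},\cD(\cE^{\nu,h}))$ are all well defined. By Theorem \ref{h-r} (i.e.\ Lemma \ref{nex} together with Lemma \ref{mex}), $R\mu\in\cD_e(\cE^\nu)$ is strictly positive $m$-a.e.\ with $\cE^\nu(R\mu)=0$, so $R\mu$ is a ground state and $(\cE^\nu,\cD(\cE^\nu))$ is critical; by Lemma \ref{rc} this forces $T^\nu_t(R\mu)=R\mu$ $m$-a.e., whence $h:=R\mu\in\cH^+(\nu)$ and the $h$-transform in the statement is legitimate. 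That $(\cE^{\nu,R\mu},\cD(\cE^{\nu,R\mu}))$ is a Dirichlet form follows from the $h$-transform theory of Li \cite{Li}; under the standing hypothesis of the paragraph preceding the Corollary that $X$ has no killing inside, its associated semigroup is precisely the sub-Markovian semigroup $\widetilde{p}^\nu_t$ displayed just above, which exhibits the Dirichlet property explicitly.

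It then remains to check recurrence, namely that $1\in\cD_e(\cE^{\nu,R\mu})$ and $\cE^{\nu,R\mu}(1)=0$. For the first point I would invoke the equivalence $u\in\cD_e(\cE^\nu)\iff u/R\mu\in\cD_e(\cE^{\nu,R\mu})$ recalled in Section 2 (from \cite[Lemma 2.8]{T3}), applied to $u=R\mu$, which yields $1=R\mu/R\mu\in\cD_e(\cE^{\nu,R\mu})$. For the second,
$$
\cE^{\nu,R\mu}(1)=\cE^\nu(R\mu\cdot 1,R\mu\cdot 1)=\cE^\nu(R\mu)=0
$$
by Theorem \ref{h-r}. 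By Definition \ref{def-transient}(2) this is exactly the recurrence of $(\cE^{\nu,R\mu},\cD(\cE^{\nu,R\mu}))$, which completes the proof.

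The argument is essentially bookkeeping, so the only point that needs genuine care is verifying that $\nu$ fits the Section 2 framework — local Kato property of $\nu$, and positive semi-definiteness and closability of $(\cE^\nu,\cD(\cE)\cap C_0(E))$ — all of which is already carried out in Section 5; given that, no real obstacle remains. Alternatively, one may bypass the direct computation by quoting Theorem \ref{tai}(ii): criticality of $(\cE^\nu,\cD(\cE^\nu))$ supplies some $g\in\cH^+(\nu)$ with $(\cE^{\nu,g},\cD(\cE^{\nu,g}))$ recurrent, and Remark \ref{g-u} then identifies every such $g$ with a constant multiple of $R\mu$, so in particular $(\cE^{\nu,R\mu},\cD(\cE^{\nu,R\mu}))$ is recurrent.
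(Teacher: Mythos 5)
Your proof is correct, and it reaches the corollary by the route the paper itself signals (``criticality of $(\cE^\nu,\cD(\cE^\nu))$ implies recurrence of the transform''): Theorem \ref{h-r} gives the ground state $R\mu$, and the recurrence check $1=R\mu/R\mu\in\cD_e(\cE^{\nu,R\mu})$ with $\cE^{\nu,R\mu}(1)=\cE^\nu(R\mu)=0$ is exactly the forward direction of Theorem \ref{tai}(ii). The one place where you genuinely diverge from the paper is how you certify that $R\mu\in\cH^+(\nu)$, i.e.\ that the $h$-transform by $R\mu$ is a legitimate (sub-Markovian) Dirichlet form. The paper does this probabilistically in the paragraph preceding the corollary: writing $R\mu(X_t)=M_t-A^\mu_t$ and applying the It\^o formula to the multiplicative functional $L_t$, it shows $E_x(L_t)\le 1$, i.e.\ $p^\nu_tR\mu\le R\mu$, and this is precisely where the hypothesis that $X$ has no killing inside (predictability of $\zeta$, so that $\int_0^tV_s\,dM_s$ is a local martingale) is used. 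You instead invoke Lemma \ref{rc} (via Lemma \ref{ex-ine}): since $\cE^\nu(R\mu)=0$, the extended semigroup satisfies $T^\nu_tR\mu=R\mu$, hence $R\mu\in\cH^+(\nu)$ in the analytic sense used in Definition \ref{def-criticality} and Theorem \ref{tai}. Both are valid; your analytic route is more economical and does not need the no-killing hypothesis for the bare statement of the corollary, whereas the paper's probabilistic argument additionally delivers the explicit Feynman--Kac representation of the transformed semigroup by $\widetilde{p}^\nu_t$, which is what that hypothesis is really for. Your preliminary verifications ($\nu\in\cK_{\loc}$, positive semi-definiteness and closability of $(\cE^\nu,\cD(\cE)\cap C_0(E))$, strict positivity and local boundedness of $R\mu$) are indeed already carried out in Section 5, and the alternative ending via Theorem \ref{tai}(ii) plus Remark \ref{g-u} is correct though redundant, since the $h$ produced there is the ground state $R\mu$ itself (up to a constant factor, which does not affect recurrence).
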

 
\section{Symmetric $\alpha$-stable process: Recurrence and transience}

In this section, we apply the results obtained in previous section to Fractional Schr\"{o}dinger operators  
with Hardy potential.

\begin{ex}\la{exa}
Let us consider the symmetric $\alpha$-stable process $X^{(\alpha)}$
on $\mathbb{R}^d$ with $0 < \alpha < 2$ and $\alpha < d$. 
For $0\le\delta\le d-\alpha$, define a smooth Radon measure $\mu^\delta$ by 
$$
\mu^\delta=\kappa(\delta)|x|^{-\alpha}dx,
$$
where $\kappa(\delta)$ is the constant in (\ref{kapd}). Let 
$$
\delta^*=\frac{d-\alpha}{2},\ \ \ \ \kappa^*=\kappa(\delta^*).
$$
Then $\kappa^*$ is the best constant in the hardy inequality for 
$(\cE^{(\alpha)},\cD(\cE^{(\alpha)}))$ given by (\ref{best}) and
$\mu^{\delta^*}$ equals $\nu$ in Example \ref{ex5}.  
It is known in \cite{BGJP}) that 
$$
\kappa(\delta)<\kappa^*\ \text{for}\  \delta\not=\delta^*,\ \ \ \ 
\kappa(\hat{\delta})=\kappa(\delta)\ \text{for}\ \hat{\delta}=d-\alpha-\delta.
$$
It follows from these facts that 
\bequ\la{sube}
\delta\not=\delta^*\ \Longleftrightarrow\ \lambda(\mu^\delta)>1.
\eequ
By Theorem \ref{ma} and  Lemma \ref{well1}, if $\delta\not=\delta^*$, then 
$(\cE^{\mu^\delta}(:=\cE^{(\alpha), \mu^{\delta}}),\cD(\cE^{\mu^\delta}))$ is subcritical and 
$(\cE^{\mu^{\delta},h},\cD(\cE^{\mu^{\delta},h}))$ is transient for any $h\in\cH^+(\mu^\delta)$. 
We know from \cite[Theorem 3.1]{BGJP} that for $0\le\delta\le\delta^*$
\bequ\la{pre}
p^{\mu^\delta}_t(|x|^{-\delta})=|x|^{-\delta},
\eequ
in particular, the function $|x|^{-\delta}$ belongs to $\cH^+(\mu^\delta)$. 

Let us simply denote $\cE^{\delta}$ for $\cE^{\mu^{\delta},|x|^{-\delta}}$.
It is shown in \cite[Theorem 5.4]{BGJP} that $\cE^{\delta,|x|^{-\delta}}$ is expressed as
\bequ\la{s-d}
\cE^{\delta}(u)=\frac{1}{2} \mathcal{A} (d,\alpha)
\int \!\!\! \int_{\mathbb{R}^d \times \mathbb{R}^d \setminus \triangle }
\frac{(u(x) - u(y))^2}{|x - y|^{d + \alpha}|x|^\delta|y|^\delta} dx dy, \ u\in C^\infty_0(\mathbb{R}^d) 
\eequ
and the closures of $(\cE^{\delta}, C^\infty_0(\mathbb{R}^d))$ 
  in $L^2(\mathbb{R}^d;|x|^{-2\delta}dx)$
is identified with $\cD(\cE^{\delta})$. Moreover, It is shown in \cite[Theorem 5.4]{BGJP} that the closure 
of $(\cE^{\delta}, C^\infty_0(\mathbb{R}^d\setminus\{0\}))$ also is identified with
 $\cD(\cE^{\delta})$,
which implies the capacity of $\{0\}$ with respect to $(\cE^{\delta},\cD(\cE^{\delta}))$ 
equals zero.

As a result, we see that for $0\le\delta<\delta^*$, the Dirichlet form $(\cE^{\delta},\cD(\cE^{\delta}))$
 is transient, which leads us to the transience of $(\cE^{\delta},\cD(\cE^{\delta}))$ 
 for $\delta^*<\delta\le d-\alpha$ (Remark \ref{trans} below).
Note that (\ref{pre}) implies the Markov process generated by 
$(\cE^{\delta},\cD(\cE^{\delta}))$ is conservative.

If $\delta=\delta^*$, then $(\cE^{\mu^{\delta^*}},\cD(\cE^{\mu^{\delta^*}}))$ is critical and 
$(\cE^{\delta^*},\cD(\cE^{\delta^*}))$ is recurrent (\cite{Mi1}). 
Here we give an elementary proof by making an approximation sequence of the identity function $1$. 
First note that $C_0^{\sf lip}(\mathbb{R}^d{\setminus}\{0\}))$ is included in $\cD(\cE^{\delta,|x|^{-\delta}})$,
where $C_0^{\sf lip}(\mathbb{R}^d{\setminus}\{0\}))$ is the set of all Lipschitz 
continuous functions compactly supported in ${\mathbb R}^d{\setminus}\{0\}$.
Let $f_n$ be the function on $[0,\infty)$ such that 
\beqn
f_n(t)=\left\{
\begin{split}
1, \quad & \quad 0\le t \le n, \\
2n-t, & \quad n\le t \le 2n,\\
0,\quad  &\quad t\ge 2n
\end{split}
\right.
\eeqn
and put $\phi_n(x)=f_n(|x|)$. We define the function $\varphi_n(x)$ by
\begin{equation} \label{test0}
\varphi_n(t)=\left\{
\begin{split}
\phi_n(x), \quad \ \ & \quad |x|\ge 1, \\
\phi_n(Tx),\quad  &\quad 0< |x|\le 1,
\end{split}
\right.
\end{equation}
where $T$ is a map from $\mathbb{R}^d{\setminus}\{0\}$ to $\mathbb{R}^d{\setminus}\{0\}$ 
defined by 
\bequ\la{kel}
Tx=x/|x|^2.  
\eequ
Then $\varphi_n \in C_0^{\sf lip}({\mathbb R}^d{\setminus}\{0\})$ for 
each $n$.

Since $\varphi_n(x)\uparrow 1$ and $({\mathcal E}^{\delta, |x|^{-\delta}}, 
{\mathcal D}({\mathcal E}^{\delta, |x|^{-\delta}}))$ is the Dirichlet form of pure-jump type,  
it is enough to see the following estimate in order to 
obtain the recurrence of the form (see \cite{U02}): 
\bequ\la{or}
\sup_n \iint_{\mathbb{R}^d \times \mathbb{R}^d \setminus \triangle }
\frac{(\varphi_n(x) - \varphi_n(y))^2}{|x - y|^{d + \alpha}|x|^{(d-\alpha)/2}|y|^{(d-\alpha)/2}} dx dy<\infty.
\eequ

\noindent
Put
$$
\Phi_n(x,y)=\frac{(\varphi_n(x) - \varphi_n(y))^2}{|x - y|^{d + \alpha}|x|^{(d-\alpha)/2}|y|^{(d-\alpha)/2}} 
\ \  {\rm for} \  (x,y) \in {\mathbb R}^d \times {\mathbb R}^d  \ {\rm with} \ x\not=y.
$$
The map $T$ satisfies 
$$
|Tx - Ty|^{d + \alpha}|Tx|^\delta|Ty|^\delta=|x-y|^{d + \alpha}|x|^{-d-\alpha-\delta}|y|^{-d-\alpha-\delta},
$$
and the Jacobian of $T$ equals $1/|x|^{2d}$. As a result,  we see from the definition of $\varphi_n$ that
$$
\iint_{\{|x|\le1\} \times \{|y|\le1\}}
\Phi_n(x,y) dx dy=\iint_{\{|x|\ge1\} \times \{|y|\ge1 \}}\Phi_n(x,y)dx dy,
$$
and so 
\begin{align*}
&\iint_{\mathbb{R}^d \times \mathbb{R}^d} \Phi_n(x,y) dx dy \\
& =\,2\left(\iint_{\{|x|\le1\} \times \{|y|\ge1 \}}\Phi_n(x,y)dx dy
+ \iint_{\{|x|\ge1\} \times \{|y|\ge1 \}}\Phi_n(x,y)dx dy\right)\\
&=:2({\rm I}+{\rm II}).
\end{align*}
Since $\Phi_n(x,y)=0$ for $(x,y) \in \{1/n \le |x|\le 1\} \times \{1\le |y|\le n\}$,  
the integral  in {\rm I} can be decomposed into two parts as follows:
\begin{align*}
{\rm I} & = \iint_{\{|x|\le1/n\} \times \{|y|\ge1 \}}\Phi_n(x,y)dx dy
+\iint_{\{1/n\le|x|\le1\} \times \{|y|\ge n \}}\Phi_n(x,y)dx dy \\
& =:  {\rm I}_1+{\rm I}_2.
\end{align*}
The inequality $|x-y|\ge \left(1-{|x|}/{|y|}\right)|y|\ge\left(1-{1}/{n}\right)|y|$ holds 
for 
$$
(x,y)\in \Big(\{|x|\le1/n\} \times \{|y|\ge1 \} \Big)\bigcup 
\Big(\{1/n\le|x|\le1\} \times \{|y|\ge n\}\Big).
$$   
So we have $\Phi_n(x,y)\le c|x|^{-(d-\alpha)/2}|y|^{-(3d+\alpha)/2}$ and ${\rm I}_1$ 
and ${\rm I}_2$ converges to 0 as $n\to\infty$.

Noting that $\Phi_n$ is a symmetric function on ${\mathbb R}^d \times{\mathbb R}^d$ and 
vanishes on the set $\{|x|\ge 3n\}\times\{|y|\ge 3n\}$,  
divide the integral in {\rm (II)} into two parts:
\begin{align*}
{\rm II} &= 2 \hspace*{-10pt} 
\iint\limits_{\{1\le |x|\le 3n\} \times \{ 3n \le |y| \}}\Phi_n(x,y)dx dy
+ 2\iint\limits_{\{1\le  |x| \le  |y| \le 3n \}}\Phi_n(x,y)dx dy \\
&  =: 2 ({\rm II}_1+{\rm II}_2).
\end{align*}
Since  $\varphi_n$ vanishes on the set $\{2n \le |x| \}$ 
and the estimates  $|x-y|\ge (1/3)|y|$ hold for $(x,y)\in\{1\le |x|\le 2n\} \times \{|y|\ge 3n \}$,  
we see that 
$$
{\rm II}_1\le c\iint\limits_{\{1\le |x|\le 2n\} \times \{|y|\ge 3n \}}
|x|^{-(d-\alpha)/2}|y|^{-(3d+\alpha)/2}dx dy\le C.
$$
The Lipschitz continuity $|\varphi_n(x)-\varphi_n(y)|\le (1/n^2)|x-y|$ tells us 
\begin{align*}
{\rm II}_2&\le c/n^2\iint\limits_{\{1\le |x|\le |y|\le 3n \}}
|x|^{-(d-\alpha)/2}|y|^{-(d-\alpha)/2}|x-y|^{-(d+\alpha-2)}dxdy \\
&\le c/n^2\int_{\{1\le |x|\le 3n \}}|x|^{-(d-\alpha)}dx\int_{\{|y|\le 6n \}}|y|^{-(d+\alpha-2)}dy\le C.
\end{align*}
Therefore we have the estimate (\ref{or}).
Thus we find that $({\mathcal E}^{\delta^*}, 
{\mathcal D}({\mathcal E}^{\delta^*}))$ is recurrent.

\end{ex}

\begin{rem} \la{trans} Suppose $\delta\not=\delta^*$. By the Sobolev inequality, 
$$
L^{(2d)/(d+\alpha)}\subset \cL.
$$
Hence by Lemma \ref{e-5} $G^\mu f\in \cD_e(\cE^\mu)$ for any $f\in L^{(2d)/(d+\alpha)}$.
\end{rem}

\begin{rem} \la{trans} We know form Example \ref{exa} that 
$$
\sup_n\cE^{\mu^{\delta^\ast}}(|x|^{-\delta^\ast}\varphi_n)=\sup_n\cE^{\delta^\ast}(\varphi_n)<\infty,
$$
and $|x|^{-\delta^\ast}\varphi_n(x) \uparrow |x|^{-\delta^\ast}$ as $n\to\infty$ for $x\not=0$. 
Hence, the function $|x|^{-\delta^\ast}$ belongs to $\cD_e(\cE^{\mu^{\delta^\ast}})$ and $\cE^{\mu^{\delta^*}}(|x|^{-\delta^\ast})=0$, 
that is,  it is the ground state. Therefore, we see that $(\cE^{\mu^{\delta^\ast}},
\cD(\cE^{\mu^{\delta^\ast}}))$ is critical in the sense of Definition \ref{def-criticality} (2). 

Since 
 $$
 \int_{\mathbb{R}^d}|x|^{-2\delta^\ast}|x|^{-\alpha}dx= \int_{\mathbb{R}^d}|x|^{-d}dx=\infty,
 $$
the function $|x|^{-\delta^\ast}$ does not belong to $\cD_e(\cE^{(\alpha)})$ and thus $\cD_e(\cE^{(\alpha)})
\subset\cD_e(\cE^{\mu^{\delta^\ast}})$. We see that the minimizer of $\lambda(\mu^{\delta^\ast})(=1)$ does not 
exist, in particular, $(\cD_e(\cE^{(\alpha)}),\cE^{(\alpha)})$ is not embedded in $L^2(\mathbb{R}^d,|x|^{-\alpha}dx)$.
\end{rem}

\begin{rem} \la{Inv} Denote by $X^\delta$ the Markov process generated by the regular 
Dirichlet form $(\cE^{\delta},\cD(\cE^{\delta}))$ on $L^2(\mathbb{R}^d,1/|x|^{2\delta}dx)$.
Let $T$ be the map in (\ref{kel}). Then the transformed process $T(X^\delta_t)$ is symmetric 
with respect to $1/(|x|^{2d-2\delta})dx$ and the  
 Dirichlet form generated by $T(X^\delta_t)$ is written as 
$$
\frac{1}{2} \mathcal{A} (d,\alpha)
\int \!\!\! \int_{\mathbb{R}^d \times \mathbb{R}^d \setminus \triangle }
\frac{(u(x) - u(y))(v(x) - v(y))}{|x - y|^{d-\alpha}|x|^{d-\alpha-\delta}|y|^{d-\alpha-\delta}} dx dy,
$$
that is, the Dirichlet form $(\cE^{\hat{\delta}},\cD(\cE^{\hat{\delta}}))$ 
on $L^2(\mathbb{R}^d,1/|x|^{2d-2\delta}dx)$. Noting $2d-2\delta=2\hat{\delta}+2\alpha$, 
we see that the time-changed process of $X^{\hat{\delta}}_t$ by $\int_0^t1/|X^{\hat{\delta}}_s|^{2\alpha} ds$ 
is identified with $T(X^\delta_t)$. Let $\tau_t=\inf\{s>0\mid \int_0^s(1/|X^{\hat{\delta}}_u|^{2\alpha} du>t\}$.
Then $X^{\hat{\delta}}_{\tau_t}$ has the same law as that of $T(X^\delta_t)$. 
In other words, $X^\delta_t$ has the inversion property (cf, \cite{ACGZ}).
By the time-change,
the recurrence and transience are invariant, and thus the transience of $X^\delta$ implies that of $X^{\hat{\delta}}$.

\end{rem}

 \end{document}